\author{Sabrina Pauli}
\date{}
\title{Quadratic types and the dynamic Euler number of lines on a quintic threefold}
\newtheorem{theorem}{Theorem}[section]
\newtheorem{proposition}[theorem]{Proposition}
\newtheorem{corollary}[theorem]{Corollary}
\newtheorem{remark}[theorem]{Remark}
\newtheorem{lemma}[theorem]{Lemma}
\newtheorem{claim}[theorem]{Claim}
\theoremstyle{definition}
\newtheorem{definition}[theorem]{Definition}
\newtheorem{example}[theorem]{Example}
\newcommand{\C}{\mathbb{C}}
\newcommand{\Z}{\mathbb{Z}}
\newcommand{\A}{\mathbb{A}}
\newcommand{\PP}{\mathbb{P}}
\newcommand{\M}{\mathcal{M}}
\newcommand{\RR}{\mathbb{R}}
\newcommand{\Res}{\operatorname{Res}}
\newcommand{\Gr}{\operatorname{Gr}}
\newcommand{\Sym}{\operatorname{Sym}}
\newcommand{\GW}{\operatorname{GW}}
\newcommand{\Tr}{\operatorname{Tr}}
\newcommand{\Spec}{\operatorname{Spec}}
\newcommand{\HH}{\mathbb{H}}
\newcommand{\mathcolorbox}[2]{\colorbox{#1}{$\displaystyle #2$}}
\newcommand{\ind}{\operatorname{ind}}
\newcommand{\CH}{\operatorname{CH}}
\newcommand{\tCH}{\widetilde{\operatorname{CH}}}
\begin{document}
\maketitle
\begin{abstract}

We provide a geometric interpretation of the local contribution of a line to the count of lines on a quintic threefold over a field $k$ of characteristic not equal to 2, that is, we define the \emph{type} of a line on a quintic threefold and show that it coincides with the local index at the corresponding zero of the section of $\Sym^5\mathcal{S}^*\rightarrow\Gr(2,5)$ defined by the threefold.

Furthermore, we define the \emph{dynamic Euler number} which allows us to compute the $\A^1$-Euler number as the sum of local contributions of zeros of a section with non-isolated zeros which deform with a general deformation. As an example we provide a quadratic count of 2875 distinguished lines on the Fermat quintic threefold which computes the dynamic Euler number of $\Sym^5\mathcal{S}^*\rightarrow\Gr(2,5)$.

Combining those two results we get that when $k$ is a field of characteristic not equal to 2 or 5
\[\sum\Tr_{k(l)/k}(\operatorname{Type}(l))=1445\langle1\rangle+1430\langle-1\rangle\in\GW(k)\]
where the sum runs over the lines on a general quintic threefold.
\end{abstract}
\section{Introduction}
On a general quintic threefold there are finitely many lines. While the number of complex lines on a general quintic threefold is always $2875$, the number of real lines depends on the choice of quintic threefold. 
However, we get an invariant count, namely 15, if we count each line with an assigned sign \cite{MR3176620,MR3370064}.
So the real lines on a general quintic threefold can be divided into two `types', one contributes a $(+1)$, the other a $(-1)$ to the invariant signed count.
In \cite{MR4230388} Finashin and Kharlamov provide a geometric interpretation of these assigned signs. They generalize the definition of the (Segre) type of a real line on a cubic surface defined by Segre \cite{MR0008171} to the type of a real line on a degree $(2n-1)$-hypersurface in $\PP^{n+1}$ and define it to be the product of \emph{degrees} of certain \emph{Segre involutions}. In particular, they define the \emph{type} of a real line $l$ on a quintic threefold $X$ as follows: Any pair of points $r,s$ on the line with the same tangent space $T:=T_sX=T_rX$ in $X$, defines an involution $i:l\cong\PP^1_{\RR}\rightarrow \PP^1_{\RR}\cong l$ which sends $p\in l$ to the unique point $q$ with $T\cap T_pX=T\cap T_qX$. To each involution they assign a $(+1)$ if the involution has fixed points defined over $\RR$ and a $(-1)$ if it does not. The type of a real line is the product of the assigned signs. 
Their definition naturally generalizes to lines defined over a field $k$ of characteristic not equal to 2: 
Let $l$ be a line defined over $k$ on a general quintic threefold $X$. Then there are 3 pairs of points on $l$ which have the same tangent space. Let $r,s$ be one of those pairs, that is $T:=T_sX=T_rX$, and assume that the closed subscheme of $r$ and $s$ is defined over a finite field extension $L$ of $k$. Then we get an involution $i:l_L\rightarrow l_L$ of the base change of $l$ to $L$ that sends a point $p\in l_L$ to the point $q\in l_L$ with $T\cap T_pX=T\cap T_qX$. This involution has fixed points defined over $L(\sqrt{\alpha})$ for some $\alpha\in L^{\times}/(L^{\times})^2$. We say $\alpha$ is the degree of the involution $i$. Choosing suitable reprentatives of the degrees of the involutions, the product of the three degrees yields a well-defined element in $\GW(k)$.
\begin{definition}
The \emph{type} of the line $l$ is the product of the degrees of the 3 involutions viewed as an element of $\GW(k)$.
\end{definition}
Here $\GW(k)$ denotes the Grothendieck-Witt group of finite rank non-degenerate symmetric bilinear forms (see for example \cite[Chapter II]{MR2104929} for the definition of $\GW(k)$) which is generated by $\langle\alpha\rangle$ for $\alpha\in k^{\times}$ where $\langle \alpha\rangle$ is the class of the form $k\times k\rightarrow k$, $(x,y)\mapsto \alpha xy$ in $\GW(k)$.

Let $Y$ be a smooth and proper scheme over a field $k$ and $E\rightarrow Y$ a relatively oriented vector bundle of rank equal to the dimension of $Y$. Then a general section $\sigma:Y\rightarrow E$ of the bundle has finitely many zeros. Kass and Wickelgren define the $\A^1$-Euler number $e^{\A^1}(E)$ to be the sum of local indices $\ind_x\sigma$ at the zeros of $\sigma$ where the local index $\ind_x\sigma$ is the local $\A^1$-degree at the zero $x$ in local coordinates and a trivialization of $E$ compatible with the relative orientation of $E$ in the sense of \cite[Definition 21]{MR4247570}.

Let $\mathcal{S}\rightarrow\Gr(2,5)$ be the tautological bundle over the Grassmannian of lines in $\PP^4$. We show in Proposition \ref{prop: relative orienatation} that the vector bundle \[\mathcal{E}:=\Sym^5\mathcal{S}^*\rightarrow\Gr(2,5)\] is relatively orientable. Furthermore, $\dim \Gr(2,4)=6=\operatorname{rank}\mathcal{E}$. So the $\A^1$-Euler number of $\mathcal{E}$ is well-defined. Let $X=\{f=0\}\subset
\PP^4$ be a general quintic threefold. Then $f$ defines a general section $\sigma_f$ of $\mathcal{E}$ by restricting the defining polynomial $f$ to the lines in $\PP^4$. The lines on $X$ are the lines with $f\vert_l=0$, i.e., the zeros of the section $\sigma_f$. Whence, the $\A^1$-Euler number of $\mathcal{E}$ is by definition the sum of local indices at the lines on a general quintic threefold. Our main result in the first half of this paper is the following.


\begin{theorem}
\label{thm: main thm 1}
Let $X=\{f=0\}\subset\PP^4$ be a general quintic threefold and let $l\subset X$ be a $k$-line on $X$.
The type of $l$ is equal to the local index at the corresponding zero of the section $\sigma_f:\Gr(2,5)\rightarrow \mathcal{E}$.
\end{theorem}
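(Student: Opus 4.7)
The strategy is computational: introduce local coordinates around $l$ on the Grassmannian, write $\sigma_f$ in a trivialization compatible with the relative orientation, read off the Jacobian at the zero, and identify its class in $\GW(k)$ with the product of the three involution degrees.

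Pick projective coordinates on $\PP^4$ so that $l=V(x_2,x_3,x_4)$. On the affine chart of $\Gr(2,5)$ around $l$, nearby lines are parameterized by the entries $(a_{ij})$ with $i\in\{0,1\}$, $j\in\{2,3,4\}$ of a $2\times 3$ matrix $A$: the line $l_A$ is the image of $[s:t]\mapsto[s:t:sa_{02}+ta_{12}:sa_{03}+ta_{13}:sa_{04}+ta_{14}]$. Using the monomial basis $\{s^\mu t^{5-\mu}\}$ to trivialize $\Sym^5\mathcal{S}^*$, the section becomes $\sigma_f(A)=f(s,t,sa_{02}+ta_{12},\dots,sa_{04}+ta_{14})\in k[s,t]_5\cong k^6$, which vanishes at $A=0$ because $f|_l=0$ and whose Jacobian there is the linear map
\[
J:k^6\longrightarrow k[s,t]_5,\qquad A\longmapsto\sum_{j=2}^4(sa_{0j}+ta_{1j})\,g_j(s,t),
\]
where $g_j:=\partial_{x_j}f|_l\in k[s,t]_4$ (note $\partial_{x_0}f|_l=\partial_{x_1}f|_l=0$). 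The projective tangent hyperplane to $X$ at $[s:t:0:0:0]$ is therefore $V\bigl(\sum_{j=2}^4 g_j(s,t)x_j\bigr)$, so two points of $l$ share a tangent space to $X$ precisely when they have the same image under the degree-$4$ morphism $G:=[g_2:g_3:g_4]:l\cong\PP^1\to\PP^2$.

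The image $G(l)\subset\PP^2$ is a plane quartic of arithmetic genus $3$ and geometric genus $0$, hence for general $f$ has exactly $3$ nodes; the three pairs $\{p_i,q_i\}\subset l$ of points with coincident tangent space are the preimages of these nodes, and the Segre involution at each pair is the involution of $l$ exchanging the pair. Its degree is the square class of the discriminant of the quadratic form cutting out its fixed-point divisor. By the Kass-Wickelgren recipe, $\ind_l\sigma_f=\langle\det J\rangle$ up to the square-class correction coming from the trivialization of $\det\mathcal{E}\otimes(\det T\Gr(2,5))^{-1}$ imposed by the relative orientation of Proposition \ref{prop: relative orienatation}, so the theorem reduces to an identity $\langle\det J\rangle=\prod_{i=1}^3\alpha_i$ in $\GW(k)$, where $\alpha_i$ is the degree of the $i$-th involution.

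Viewing $J$ as the multiplication map $\mu:k[s,t]_1\otimes k^3\to k[s,t]_5$, $(L_2,L_3,L_4)\mapsto\sum_jL_jg_j$, the desired equality is a Galois-equivariant factorization in $\GW(k)$ of the resultant-type polynomial $\det\mu$ indexed by the $k$-rational length-$3$ node scheme $N\subset\Sym^2 l$ of $G(l)$. A concrete route is to base-change to a splitting field $L$ of $N$ where each pair $\{p_i,q_i\}$ is $L$-rational, and to choose $L$-bases of $k[s,t]_1\otimes k^3$ and $k[s,t]_5$ adapted to the three chords of the rational normal quartic $\nu_4(\PP^1)\subset\PP^4$ meeting the center of projection associated with $(g_2,g_3,g_4)$, so that $\mu$ becomes block-diagonal with three $2\times 2$ blocks whose determinants are exactly the three involution discriminants; descending via the trace then gives the required identity over $k$. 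The main obstacle is carrying out this block-diagonalization cleanly and confirming the orientation compatibility; a Scheja-Storch or Bezoutian identity is an equally viable alternative to the block-diagonalization step.
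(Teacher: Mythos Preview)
Your setup coincides with the paper's: the Jacobian at $l$ is exactly the multiplication map $\mu:(L_1,L_2,L_3)\mapsto\sum L_jg_j$ from $k[s,t]_1^{\oplus3}$ to $k[s,t]_5$, and the Gau\ss\ map $G=[g_2:g_3:g_4]$ with its three nodes is precisely the geometric object governing the type. But from here your proposal stops short of a proof. The decisive algebraic identity---that $\det\mu$ agrees in $k^\times/(k^\times)^2$ with the product of the three involution degrees---is only named, not established. Your proposed ``block-diagonalization into three $2\times2$ blocks'' is not obviously achievable: after moving the three nodes to coordinate points one has $g_j=Q_mQ_n$ (for $\{j,m,n\}=\{2,3,4\}$) with $Q_i\in k[s,t]_2$, and the map $(L_1,L_2,L_3)\mapsto L_1Q_2Q_3+L_2Q_1Q_3+L_3Q_1Q_2$ does not split naturally into $2\times2$ blocks in the monomial bases. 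The paper instead proves, as a standalone lemma, the exact polynomial identity
\[
\det A_{Q_2Q_3,\,Q_1Q_3,\,Q_1Q_2}=\Res(Q_1,Q_2)\,\Res(Q_1,Q_3)\,\Res(Q_2,Q_3),
\]
by showing both sides vanish on the same divisor in the coefficients and then checking one value; this replaces your block-diagonalization and is the actual workhorse.

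Two further gaps. First, ``descending via the trace'' is not the right mechanism: both sides already live in $k^\times/(k^\times)^2$, so what is needed is not a trace but control of the square-class ambiguity introduced by the base change. The paper handles this by a case split on the Galois structure of the node scheme (all rational; one rational and a conjugate pair over $k(\sqrt\beta)$; a Galois orbit of size three), in each case performing a $k$-linear change of $(x_2,x_3,x_4)$ and tracking the explicit scalar factors so that the product of conjugate resultants becomes a norm. Second, you treat only the generic configuration; the paper also disposes of the tacnode case (two coincident nodes) and the case where $G$ is a double cover of a conic (infinitely many double points), both of which require separate, short, but genuinely different arguments. Without these, the proof is incomplete even for a general $f$, since ``general'' in the statement refers to the threefold, not to the position of $l$ on it.
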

It follows that the $\A^1$-Euler number of $\mathcal{E}$ is equal to the sum of types of lines on a general quintic threefold $X$
\[e^{\A^1}(\mathcal{E})=\sum_{l\subset X}\ind_l\sigma_f=\sum_{l\subset X}\Tr_{k(l)/k}\operatorname{Type}(l)\in \GW(k).\]
Here, $\Tr_{k(l)/k}$ denotes the trace form, that is the composition of a finite rank non-degenerate symmetric bilinear form with the field trace. 

Having defined the type, we want to compute the $\A^1$-Euler number $e^{\A^1}(\mathcal{E})$, that is, sum up all the types of lines on a general quintic threefold. However, many `nice' quintic threefolds are not general and define sections with non-isolated zeros. Yet, a general deformation of such a threefold contains only finitely many lines.
We define the \emph{dynamic Euler number} of a relatively oriented vector bundle $\pi:E\rightarrow Y$ over a smooth and proper scheme $Y$ over $k$ with $\operatorname{rank}E=\dim Y$, to be the sum of the local indices at the zeros of a deformation $\sigma_t$ of a section $\sigma$ valued in $\GW(k((t)))$, that is, the $\A^1$-Euler number of the base changed bundle $(E\rightarrow Y)_{k((t))}$ expressed as the sum of local indices at the zeros of $\sigma_t$. By Springer's Theorem (Theorem \ref{thm: Springer}) the dynamic Euler number completely determines the $\A^1$-Euler number $e^{\A^1}(E)\in \GW(k)$.

As an application, we compute the dynamic Euler number of $\mathcal{E}=\Sym^5\mathcal{S}^*\rightarrow\Gr(2,5)$ with respect to the section defined by the Fermat quintic threefold \[X=\{X_0^5+X_1^5+X_2^5+X_3^5+X_4^5=0\}\subset\PP^4.\]
There are infinitely many lines on $X$ and the section $\sigma_F$ defined by the Fermat does not have any isolated zeros.
For a general deformation $X_t=\{F_t=F+tG+t^2H+\dots=0\}$ of the $X$ Albano and Katz find 2875 distinguished complex lines on $X$ which are the limits of lines the deformation \cite{MR1024767}. Their computation still works over a field $k$ of characteristic not equal to 2 or 5 and adding up the local indices at the deformed lines $l_t$ on the deformation $X_t$, we  get  
\[e^{\text{dynamic}}(\mathcal{E})=1445\langle1\rangle+1430\langle-1\rangle\in \GW(k((t))).\]
The $\A^1$-Euler number $e^{\A^1}(\mathcal{E})$ is the unique element in $\GW(k)$ that is mapped to $e^{\operatorname{dynamic}}(\mathcal{E})$ by $i:\GW(k)\rightarrow \GW(k((t)))$ defined by $\langle a\rangle\mapsto\langle a\rangle$, that is,
\[e^{\A^1}(\mathcal{E})=1445\langle1\rangle+1430\langle-1\rangle\in\GW(k).\]
Combining this result with Theorem \ref{thm: main thm 1} we get that
\begin{equation}
e^{\A^1}(\mathcal{E})=\sum\operatorname{Tr}_{k(l)/k}\text{Type}(l)=1445\langle1\rangle+1430\langle-1\rangle\in \GW(k)
\end{equation}
where the sum runs over the lines on a general quintic threefold and $k$ is a field of characteristic not equal to 2 or 5.

\begin{remark}
Levine has already computed the $\A^1$-Euler number of $\mathcal{E}=\Sym^5\mathcal{S}^*\rightarrow\Gr(2,5)$ to be \[e^{\A^1}(\mathcal{E})=1445\langle1\rangle+1430\langle-1\rangle\in \GW(k)\] 
in \cite[Example 8.2]{LEVINE_2019} using the theory of Witt-valued characteristic classes. 
\end{remark}
Our computation reproves this result without using this theory and gives a new technique to compute `dynamic' characteristic classes in the motivic setting. Additionally, we get a refinement of his result. 
The local index is only defined for isolated zeros. However, we can define the `local index' $\ind_l\sigma_{F_t}$ at a line on the Fermat that deforms with a general deformation defined by $F_t$ to lines $l_{t,1},\dots,l_{t,m}$, to be the unique element in $\GW(k)$ that is mapped to $\sum_{i=1}^m\ind_{l_{t,i}}\sigma_{F_t}\in\GW(k((t)))$ by $i:\GW(k)\mapsto\GW(k((t)))$ where $\sigma_{F_t}$ is the section defined by restricting $F_t$. 
\begin{theorem}
Assume $\operatorname{char}k\neq 2,5$.
There are well-defined local indices $\ind_l\sigma_{F_t}\in \GW(k)$ of the lines on the Fermat quintic threefold that deform with a general deformation $X_t$ of the Fermat depending on the deformation and
\[e^{\A^1}(\mathcal{E})=\sum_{l\text{ deforms with }F_t}\ind_l\sigma_{F_t}=1445\langle1\rangle+1430\langle-1\rangle\in \GW(k).\]
\end{theorem}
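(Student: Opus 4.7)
Our plan is to combine three ingredients: the dynamic Euler number computation for $\mathcal{E}$ already given above, Theorem~\ref{thm: main thm 1} applied over the base field $k((t))$, and Springer's Theorem (Theorem~\ref{thm: Springer}), which asserts that the inclusion $i\colon \GW(k)\hookrightarrow\GW(k((t)))$ is injective.

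Fix a general deformation $F_t=F+tG+t^2H+\cdots$ of the Fermat polynomial and let $\sigma_{F_t}$ be the induced section of $\mathcal{E}_{k((t))}$. By the Albano-Katz analysis (which works whenever $\operatorname{char} k\neq 2,5$), the isolated zeros of $\sigma_{F_t}$ cluster, as $t\to 0$, around a finite set of $2875$ distinguished Fermat lines. We partition this set into its $\operatorname{Gal}(k^{\operatorname{sep}}/k)$-orbits. For a representative $l$ of such an orbit, let $l_{t,1},\ldots,l_{t,m}$ be the deformed lines on $X_t$ clustering at the Galois orbit of $l$; they form a Galois-stable family over $k((t))$. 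We must establish that
\[
S_l:=\sum_{i=1}^{m}\ind_{l_{t,i}}\sigma_{F_t}\in\GW(k((t)))
\]
lies in the image of $i$, so that the preimage $\ind_l\sigma_{F_t}\in\GW(k)$ is well-defined (uniqueness being guaranteed by Springer).

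Granting this for each orbit, the value of $\sum_l\ind_l\sigma_{F_t}$ is pinned down by the dynamic Euler number: additivity and injectivity of $i$ give
\[
i\Big(\sum_l\ind_l\sigma_{F_t}\Big)=\sum_l S_l = e^{\operatorname{dynamic}}(\mathcal{E}) = 1445\langle 1\rangle+1430\langle -1\rangle,
\]
and since the right-hand side is visibly in $i(\GW(k))$, injectivity of $i$ forces $\sum_l\ind_l\sigma_{F_t}=1445\langle 1\rangle+1430\langle -1\rangle$ in $\GW(k)$. This agrees with $e^{\A^1}(\mathcal{E})$ by the defining property of the dynamic Euler number.

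The main obstacle is the orbit-wise unramifiedness of $S_l$. We would approach this by applying Theorem~\ref{thm: main thm 1} to each deformed line over $k((t))$, giving
\[
\ind_{l_{t,i}}\sigma_{F_t}=\Tr_{k((t))(l_{t,i})/k((t))}\operatorname{Type}(l_{t,i}),
\]
and then explicitly analyzing the three Segre involutions on $l_{t,i}$ via a Puiseux expansion of the deformed line whose leading term is determined by $l$ and the first-order deformation data $G$. The technical crux is to show that the symmetrized product of the three involution degrees, after summing over the Galois orbit, defines a quadratic form whose second residue in Springer's decomposition of $\GW(k((t)))$ vanishes. This can be verified by combining the explicit Albano-Katz description of the distinguished lines with the invariance of the local index under coordinate and trivialization changes.
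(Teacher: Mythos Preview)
Your framework is sound, but the proposal has a genuine gap at exactly the point you flag as ``the technical crux.'' You correctly reduce the problem to showing that each packet $S_l=\sum_i\ind_{l_{t,i}}\sigma_{F_t}$ lies in the image of $i\colon\GW(k)\hookrightarrow\GW(k((t)))$, and you correctly note that Springer's theorem then pins down the preimage. However, you do not actually establish this unramifiedness; you only sketch a plan involving the Type interpretation and Puiseux expansions of the Segre involutions, without carrying it out. Moreover, invoking Theorem~\ref{thm: main thm 1} here is a detour: since Type equals the class of the Jacobian determinant, analysing the three Segre involutions of $l_{t,i}$ just reproduces the Jacobian computation that the paper does directly, and gives no extra leverage for controlling the second residue.

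The paper's argument is not conceptual but explicitly computational, and the mechanism forcing each $S_l$ into $i(\GW(k))$ is different in the two cases. For a multiplicity-$5$ line the leading term of the Jacobian determinant of $\sigma_{F_t}$ in the chosen trivialization is $5^4\cdot 500\, t^{8}\,z_1^4z_1'^4$; the \emph{even} power of $t$ makes $\langle Jf(l_t)\rangle=\langle5\rangle$ already lie in $\GW(L)$, and after tracing one gets $2\HH+\langle1\rangle$. For a multiplicity-$2$ line the situation is more delicate: the leading term is proportional to $t^{3}$, so individually $\langle Jf(l_t)\rangle$ is \emph{not} in the image of $i$. What saves the day is that the coefficient is a unit times $\sqrt{-db/a}$, i.e.\ a generator $w$ of the quadratic \'etale algebra $L=E[w]/(w^2+abd)$ over which the two deformed lines are defined, and the paper's Lemma~\ref{lemma: quadr ext hyperbolic form} shows that $\Tr_{L/E}\langle\lambda w\rangle=\HH$ for any unit $\lambda$. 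This is the concrete reason the $t$-dependence cancels in the packet, and it is not visible from your outline.
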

Note that $i:\GW(k)\rightarrow\GW(k((t)))$ is not an isomorphism, it is injective but not surjective. So it is not clear, that there exist such local indices in $\GW(k)$. This indicates that there should be a more general notion of the local index which can also be defined for non-isolated zeros of a section which deform with a general deformation. 

Recall from \cite[Chapter 6]{MR1644323} that classically the intersection product splits up as a sum of cycles supported on the \emph{distinguished varieties}. We observe that this is true for the intersection product of $\sigma_F$ by the zero section in the enriched setting.
\begin{theorem}
The sum of local indices at the lines on a distinguished variety of the intersection product of $\sigma_F$ by $s_0$ that deform with a general deformation is independent of the chosen deformation. So for a distinguished variety $Z$ of this intersection product, there is a well-defined local index $\ind_Z\sigma_F\in \GW(k)$ and
\[e^{\A^1}(\mathcal{E})=\sum \ind_Z\sigma_F=1445\langle1\rangle+1430\langle-1\rangle\in\GW(k)\]
where the sum runs over the distinguished varieties.
\end{theorem}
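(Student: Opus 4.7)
The plan is to combine Fulton's distinguished variety decomposition with the per-line local index refinement from the previous theorem. First, I would recall from \cite[Chapter 6]{MR1644323} that the intersection product $[\sigma_F]\cdot[s_0]$ decomposes canonically as a sum of cycle-class contributions supported on finitely many distinguished subvarieties $Z_1,\dots,Z_N\subseteq Z(\sigma_F)\subset\Gr(2,5)$, which depend only on $F$ and not on any auxiliary deformation. For a general deformation $F_t$, let $S_i(F_t)$ denote the finite set of Albano--Katz distinguished lines lying in $Z_i$. By the previous theorem, each $l\in S_i(F_t)$ carries a well-defined local index $\ind_l\sigma_{F_t}\in\GW(k)$, and I would set
\[\ind_{Z_i}\sigma_F(F_t):=\sum_{l\in S_i(F_t)}\ind_l\sigma_{F_t}\in\GW(k).\]
The content of the theorem is that this quantity is independent of $F_t$. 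The equality with $1445\langle1\rangle+1430\langle-1\rangle$ then follows at once by summing over the $Z_i$ and invoking the dynamic Euler number computation already established.

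To prove independence of $F_t$, I would interpolate between two general deformations $F_t^{(0)}$ and $F_t^{(1)}$ via the pencil $F_t^{(\lambda)}=(1-\lambda)F_t^{(0)}+\lambda F_t^{(1)}$ parametrized by $\lambda\in\A^1_k$. For $\lambda$ in a Zariski-open subset $U\subset\A^1_k$ containing $\{0,1\}$, $F_t^{(\lambda)}$ remains a general deformation of the Fermat, so $\ind_{Z_i}\sigma_F(F_t^{(\lambda)})\in\GW(k)$ is defined. I would then show that the assignment $\lambda\mapsto\ind_{Z_i}\sigma_F(F_t^{(\lambda)})$ is locally constant on $U$: as $\lambda$ varies in a suitable neighborhood of a point $\lambda_0\in U$, the lines in $S_i(F_t^{(\lambda)})$ remain inside a fixed formal neighborhood of $Z_i$ in $\Gr(2,5)$ and fit into a flat family over this neighborhood, so the sum of $\A^1$-local indices is invariant under this equisingular perturbation. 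Since $U$ is connected, the function is constant, yielding $\ind_{Z_i}\sigma_F(F_t^{(0)})=\ind_{Z_i}\sigma_F(F_t^{(1)})$.

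The main obstacle is establishing the separation between the formal neighborhoods of the distinguished varieties $Z_i$ as the deformation varies. Concretely, one must rule out that a deformed line on $X_t^{(\lambda)}$ migrates from the basin of attraction of some $Z_i$ to that of a different $Z_j$ as $\lambda$ moves inside $U$. Resolving this requires either a direct analysis of the specialization of deformed lines at $t=0$ inside a formal tubular neighborhood of each $Z_i$, or the development of an enriched intersection theory in which a class $\alpha_i^{\A^1}$ is attached intrinsically to the pair $(\sigma_F,Z_i)$ and pushes forward to $\ind_{Z_i}\sigma_F\in\GW(k)$. The latter framework would make the decomposition $e^{\A^1}(\mathcal{E})=\sum_i\ind_{Z_i}\sigma_F$ intrinsic and independence of $F_t$ automatic; the pencil argument above is an explicit substitute that should suffice once the required disjointness of neighborhoods is verified for sufficiently small perturbations of the deformation.
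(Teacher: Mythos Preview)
Your approach differs fundamentally from the paper's, and the gap you yourself identify remains unresolved.

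The paper does not prove deformation-independence by any abstract interpolation argument. Instead, independence is read off directly from the explicit computations in Section~\ref{section: lines on fermat}. For a multiplicity-5 line $l$ (a point of some $W_i\cap W_j$), the contribution of its five deformed lines is computed to be $\Tr_{L((t))/k((t))}(\langle 5\rangle)$ with $L=k[w]/(w^5-ab)$, where $a,b$ depend on $G$; but Lemma~\ref{lemma:zeta5} shows this trace equals $2\HH+\langle 1\rangle$ for \emph{any} value of $ab\in k^\times$, so the contribution is visibly independent of the deformation. For a component $W_i$, the ten multiplicity-2 lines that deform genuinely depend on $G$ (they are cut out by $Q(a,b)=0$ with $Q$ determined by $G$), but Lemma~\ref{lemma: quadr ext hyperbolic form} makes each contribute $\HH$ after the inner trace, and Lemma~\ref{lemma: trace of hyperbolic form} then gives $10\HH$ after the outer trace over $E=k[a,b]/(a^5+b^5+1,Q)$, independent of $Q$ since $\dim_k E=10$ for any general $Q$. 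Thus the well-definedness of $\ind_Z\sigma_F$ is a byproduct of the shape of the computed answers, not of a general principle.

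Your pencil argument would be more conceptual if it worked, but it is not complete. The assertion that $\lambda\mapsto\ind_{Z_i}\sigma_F(F_t^{(\lambda)})$ is locally constant on $U$ is exactly the point at issue; justifying it would require something like Harder's theorem for a family of Scheja--Storch forms over $U$, together with the separation of formal neighborhoods that you flag as the main obstacle and do not resolve. In this specific example the separation is in fact clear from the explicit description of limits (the multiplicity-5 lines sit at the fixed points $W_i\cap W_j$, while the multiplicity-2 lines stay in the smooth locus of each $W_i$), but your proposal does not appeal to this, and without it the argument is a sketch rather than a proof.
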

This is the first example of a quadratic dynamic intersection. 

Kass and Wickelgren introduced the $\A^1$-Euler number in \cite{MR4247570} to count lines on a smooth cubic surface. Since then, their work has been used to compute several other $\A^1$-Euler numbers valued in $\GW(k)$ \cite{MR4253146}, \cite{MR4211099}, \cite{pauli2020computing}, \cite{MR4237952}. The $\A^1$-Euler number fits into the growing field of $\A^1$-enumerative geometry. Other related results are \cite{BW},\cite{MR4071212}, \cite{MR4162156}, \cite{MR3909901}, \cite{kobin2019mathbba1local}, \cite{MR4198841},\cite{LEVINE_2019},\cite{MR4071217}.

\section{The local index}
\label{section: local index}

Let $X=\{f=0\}\subset \PP^{4}$ be a degree $5$ hypersurface. Then $f$ defines a section $\sigma_f$ of the bundle $\mathcal{E}:=\operatorname{Sym}^5\mathcal{S}^*\rightarrow \operatorname{Gr}(2,5)$ by restricting the polynomial $f$ to the lines in $\PP^4$. Here, $\mathcal{S}\rightarrow \operatorname{Gr}(2,5)$ denotes the tautological bundle on the Grassmannian $\operatorname{Gr}(2,5)$ of lines in $\PP^4$.  A line $l$ lies on $X$ if and only if $f\vert_l=0$ which occurs if and only if $\sigma_f(l)=0$.
\begin{definition} Assume $x$ is an isolated zero of $\sigma_f$. The \emph{local index} $\operatorname{ind}_x\sigma_f$ of $\sigma_f$ at $x$ is the local $\A^1$-degree (see \cite{MR3909901}) in coordinates and a trivialization of $\mathcal{E}$ compatible with a fixed relative orientation of $\mathcal{E}$ at $x$. 
\end{definition}
The $\A^1$-Euler number of $\mathcal{E}=\Sym^5\mathcal{S}^*\rightarrow \Gr(2,5)$ is by definition \cite{MR4247570} the sum of local indices at the zeros of a section $\sigma_f$ with only isolated zeros
\[e^{\A^1}(\mathcal{E}):=\sum_{x\text{ a zero of }\sigma_f}\operatorname{ind}_x\sigma_f\in \GW(k).\] 
In other words, the $\A^1$-Euler number of $\mathcal{E}$ is a `quadratic count' of the lines on quintic threefold $X$ with finitely many lines.

\begin{remark}
The $\A^1$-Euler number is independent of the chosen section (given it has only isolated zeros) by \cite[Theorem 1.1]{BW}. 
\end{remark}

In this section we define a relative orientation of $\mathcal{E}$ and coordinates and local trivializations of $\mathcal{E}$ compatible with it. Then we give an algebraic description of the local index at a zero of a section defined by a general quintic threefold.

\subsection{Relative orientability}

We recall the definition of a relative orientation from \cite[Definition 17]{MR4247570}.
\begin{definition}
A vector bundle $\pi:E\rightarrow X$ is \emph{relatively orientable}, if there is a line bundle $L\rightarrow X$ and an isomorphism $\phi:\operatorname{Hom}(\det TX,\det E)\xrightarrow{\cong}L^{\otimes 2}$. Here, $TX\rightarrow X$ denotes the tangent bundle on $X$. We call $\phi$ a \emph{relative orientation} of $E$.
\end{definition}

\begin{remark}
When $TX$ and $E$ are both orientable (that is, both are isomorphic to a square of a line bundle), then $E$ is relatively orientable. However, the example of lines on a quintic threefold shows, that there are relatively orientable bundles which are not orientable.
\end{remark}

\begin{proposition}
\label{prop: relative orienatation}
Denote by $G$ the Grassmannian $\Gr(2,5)$.
The vector bundle $\mathcal{E}\rightarrow G$ is relatively orientable. More precisely, there is a canonical isomorphism $\phi:\operatorname{Hom}(\det TG,\det \mathcal{E})\xrightarrow{\cong}L^{\otimes 2}$ with $L=\det\mathcal{Q}^*\otimes (\det\mathcal{S}^*)^{\otimes 6}$ where $\mathcal{Q}$ denotes the quotient bundle on the Grassmannian $G$.
\end{proposition}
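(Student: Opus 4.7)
The plan is a direct line-bundle computation on $G=\Gr(2,5)$, using only the standard identification of the tangent bundle and the behaviour of determinant under tensor and symmetric powers.

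First I would recall the canonical isomorphism $TG\cong \operatorname{Hom}(\mathcal{S},\mathcal{Q})=\mathcal{S}^*\otimes\mathcal{Q}$, which holds for any Grassmannian. Since $\mathcal{S}^*$ has rank $2$ and $\mathcal{Q}$ has rank $3$, the usual formula $\det(V\otimes W)=(\det V)^{\operatorname{rank}W}\otimes(\det W)^{\operatorname{rank}V}$ gives
\[
\det TG\;\cong\;(\det \mathcal{S}^*)^{\otimes 3}\otimes(\det\mathcal{Q})^{\otimes 2}.
\]

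Next I would compute $\det\mathcal{E}=\det\Sym^5\mathcal{S}^*$. Since $\mathcal{S}^*$ has rank $2$, $\Sym^5\mathcal{S}^*$ has rank $6$, and the effect of a diagonal scaling of $\mathcal{S}^*$ with eigenvalues $\lambda,\mu$ on the monomial basis $e_1^i e_2^{5-i}$ ($i=0,\dots,5$) multiplies $\det\Sym^5\mathcal{S}^*$ by $\prod_{i=0}^{5}\lambda^i\mu^{5-i}=(\lambda\mu)^{15}$. Thus
\[
\det\Sym^5\mathcal{S}^*\;\cong\;(\det\mathcal{S}^*)^{\otimes 15}.
\]
(This is the special case $n=5$ of the general identity $\det\Sym^n V=(\det V)^{\otimes n(n+1)/2}$ for a rank-$2$ bundle $V$.)

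Finally I would combine these two computations. Writing $\operatorname{Hom}(\det TG,\det\mathcal{E})=(\det TG)^{\vee}\otimes\det\mathcal{E}$ and using $\det\mathcal{Q}^{\vee}=\det\mathcal{Q}^*$, $(\det\mathcal{S}^*)^{\vee}=\det\mathcal{S}$, one gets
\[
\operatorname{Hom}(\det TG,\det\mathcal{E})\;\cong\;(\det\mathcal{S}^*)^{\otimes 12}\otimes(\det\mathcal{Q}^*)^{\otimes 2}\;\cong\;\bigl(\det\mathcal{Q}^*\otimes(\det\mathcal{S}^*)^{\otimes 6}\bigr)^{\otimes 2},
\]
and the canonical isomorphism $\phi$ is the one just displayed, with $L=\det\mathcal{Q}^*\otimes(\det\mathcal{S}^*)^{\otimes 6}$.

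No step here is a real obstacle; the only point that requires any care is the symmetric-power determinant formula, which can either be established by the diagonal-weight argument above or read off from the splitting principle. Once those two determinants are in hand, the isomorphism asserted in the proposition is forced by matching exponents, and the canonicity of $\phi$ follows because each isomorphism used ($TG\cong\mathcal{S}^*\otimes\mathcal{Q}$, the determinant-of-tensor formula, and the monomial description of $\Sym^5\mathcal{S}^*$) is canonical.
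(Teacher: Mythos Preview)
Your proof is correct and follows exactly the same route as the paper: identify $\det TG$ via $TG\cong\mathcal{S}^*\otimes\mathcal{Q}$, identify $\det\Sym^5\mathcal{S}^*$ as $(\det\mathcal{S}^*)^{\otimes 15}$, and combine. The only difference is that you supply a short justification (the diagonal-weight argument) for the symmetric-power determinant, whereas the paper simply asserts it.
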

\begin{proof}
It follows from the natural isomorphism $TG\cong \mathcal{S}^*\otimes \mathcal{Q}$ that there is a canonical isomorphism $\det TG\cong (\det \mathcal{S}^*)^{\otimes 3}\otimes (\det \mathcal{Q})^{\otimes 2}$ \cite[Lemma 13]{MR3176620}. The determinant of $\mathcal{E}$ is canonically isomorphic to $(\det \mathcal{S}^*)^{\otimes 15}$. Thus, 
\begin{align*}
\operatorname{Hom}(\det TG,\det \mathcal{E)}&\cong (\det TG)^*\otimes \det \mathcal{E}\\
&\cong (\det \mathcal{S})^{\otimes 3}\otimes (\det \mathcal{Q}^*)^{\otimes 2}\otimes(\det \mathcal{S}^*)^{\otimes 15}\\
 &\cong (\det \mathcal{Q}^*)^{\otimes 2}\otimes(\det \mathcal{S}^*)^{\otimes 12}.
\end{align*}
\end{proof}
\subsection{Local coordinates}
\label{subsection: local coordinates}
As in \cite[Definition 4]{MR4247570} we define the \emph{field of definition} of a line $l$ to be the residue field of the corresponding closed point in $\Gr(2,5)$. We say that a line on a quintic threefold $X=\{f=0\}\subset\PP^4$ is \emph{simple} and \emph{isolated} if the corresponding zero of the section $\sigma_f$ is simple and isolated.
\begin{lemma}
\label{lemma: separable field ext}
A line on a general quintic threefold $X=\{f=0\}\subset \PP^4$ is simple and isolated and its field of definition is a separable field extension of $k$.
\end{lemma}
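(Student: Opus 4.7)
The plan is to form the universal incidence variety and reduce the statement to generic étaleness of the projection to the space of quintics. Let $H$ denote the affine space of quintic polynomials on $\PP^4$, so $\dim H = 126$, and set
\[I = \{(l,f)\in \Gr(2,5)\times H : f\vert_l = 0\}.\]
The first projection $I \to \Gr(2,5)$ exhibits $I$ as the total space of the kernel of the surjective evaluation map $H\otimes \mathcal{O}_{\Gr(2,5)} \twoheadrightarrow \mathcal{E}=\Sym^5\mathcal{S}^*$, hence as a vector bundle over $\Gr(2,5)$. In particular $I$ is smooth and irreducible over $k$ of dimension $6 + (\dim H - 6) = \dim H$.

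Next I would analyze the second projection $\pi : I \to H$. It is a dominant morphism of smooth irreducible $k$-varieties of the same dimension, and for every $f \in H(k)$ its fiber $\pi^{-1}(f)$ coincides schematically with the zero locus $\sigma_f^{-1}(0)$. Granted that $\pi$ is generically étale, there is a non-empty open $U \subset H$ over which $\pi$ is étale, and for every $f \in U(k)$ the $k$-scheme $\pi^{-1}(f)$ is then finite étale. Consequently each closed point $l$ has residue field $k(l)$ finite separable over $k$ --- which is the field of definition of $l$ --- and the tangent map $(d\sigma_f)_l : T_l \Gr(2,5) \to \mathcal{E}_l$ is an isomorphism, so that $l$ is a simple isolated zero of $\sigma_f$. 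This delivers the conclusion of the lemma.

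It therefore remains to establish generic étaleness of $\pi$. In characteristic zero this is automatic for any dominant morphism of smooth irreducible $k$-varieties of equal dimension. In positive characteristic (under the running hypothesis $\operatorname{char} k \neq 2$), generic étaleness follows as soon as one exhibits a single closed point $(l_0, f_0) \in I_{\bar k}$ at which $d\pi$ is an isomorphism of tangent spaces, equivalently at which $H^0(l_0, N_{l_0/V(f_0)}) = 0$. The natural candidate is to take $V(f_0)$ to be a smooth quintic threefold containing a line $l_0$ with balanced normal bundle $\mathcal{O}(-1)\oplus\mathcal{O}(-1)$; such a pair is supplied, in any characteristic $\neq 2,5$, by the Albano--Katz deformation of the Fermat quintic developed later in the paper, which produces $2875$ simple isolated lines on a general deformation.

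The main obstacle is precisely this verification in positive characteristic: the scheme-theoretic framework is routine, but exhibiting a balanced witness line either requires an explicit construction or bootstraps from the degeneration computation of the subsequent sections. Note that this limitation is harmless for the paper's main results, as they already impose $\operatorname{char} k \neq 2, 5$, so the Albano--Katz input is available.
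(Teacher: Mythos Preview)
Your argument is correct and is essentially the standard proof of the result the paper cites. The paper's own proof is a one-line invocation of \cite[Theorem 6.34]{MR3617981}, which asserts that the Fano scheme of lines on a general quintic threefold is geometrically reduced and zero-dimensional; from this the paper reads off that each closed point is simple and has separable residue field. Your incidence-correspondence argument is precisely how one would prove that cited theorem from scratch.

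The trade-off is as follows. The paper's approach is terse, but the reference \cite{MR3617981} works over an algebraically closed field of characteristic zero, so the positive-characteristic content is hidden in the citation. Your approach makes the characteristic issue explicit and proposes resolving it by exhibiting a witness line with balanced normal bundle; borrowing this from the Albano--Katz deformation in Section~\ref{section: lines on fermat} is legitimate and not circular, since the Jacobian computations there (e.g.\ the lowest term of $Jf(l_t)$) verify simplicity directly, without appealing to Lemma~\ref{lemma: separable field ext}. The only cost is the forward reference and the restriction to $\operatorname{char} k \neq 2,5$, which in any case matches the paper's standing hypotheses for the main theorems.

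One small omission: in characteristic zero you assert that $\pi$ is dominant before invoking generic smoothness, but you do not justify this. It is harmless, since your positive-characteristic witness argument (a single point where $d\pi$ is an isomorphism) already establishes dominance uniformly; you may as well run that argument once and drop the separate characteristic-zero paragraph.
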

\begin{proof}
By \cite[Theorem 6.34]{MR3617981} the Fano scheme of lines on $X$ is geometrically reduced and zero dimensional. It follows that the zero locus $\{\sigma_f=0\}$ is zero dimensional and geometrically reduced. It particular, it consists of finitely many, simple zeros. A line $l$ on $X$ with field of definition $L$ defines a map $\Spec L\rightarrow \{\sigma_f=0\}$ and $\Spec L$ is a connected component of $\{\sigma_f=0\}$. In particular, $\Spec L$ is geometrically reduced, which implies that $l$ is simple and the field extension $L/k$ is separable.
\end{proof}
Let $L$ be the field of definition of a line $l$ on a general quintic threefold $X=\{f=0\}\subset\PP^4$ and let $(\sigma_f)_{L}$ be the base changed section of the base changed bundle $\mathcal{E}_{L}$. By Lemma \ref{lemma: separable field ext} $L/k$ is separable. 
Recall that the \emph{trace form} $\Tr_{L/k}(\beta)$ of a finite rank non-degenerate symmetric bilinear form $\beta$ over a finite separable field extension $L$ over $k$ is the finite rank non-degenerate symmetric bilinear form over $k$ defined by the composition
\[V\times V\xrightarrow{\beta}L\xrightarrow{\Tr_{L/k}}k\]
where $\Tr_{L/k}$ denotes the field trace.
Denote by $(\sigma_f)_L$ and $\Gr(2,5)_L$ be the base change to $L$ and let $\Spec L\xrightarrow{l_L} \Gr(2,5)_L$ be the base change of $\Spec k\xrightarrow{l}\Gr(2,5)$.  By \cite[Proposition 34]{MR4247570} $\ind_l\sigma_f=\Tr_{L/k}\ind_{l_{L}}(\sigma_{f})_{L}$. Therefore, to compute the local index at $l$, we can assume that $l\subset X$ is $k$-rational and take the trace form if necessary.
\begin{remark}
\label{remark: etale algebra}
One can define traces in a much more general setting. Let $A$ be a commutative ring with $1$ and assume $B$ is a finite projective $A$-algebra. Then the trace $\Tr_{B/A}:B\rightarrow A$ is the map that sends $b\in B$ to the trace of the multiplication map $m_b(x)=b\cdot x$ (just like in the field case). The trace is transitive in the following sense: If $C$ is a finite projective $B$-algebra, then $\Tr_{C/A}=\Tr_{B/A}\circ\Tr_{C/B}$. In case $B$ is \'{e}tale/separable over $A$, we have a map $\Tr_{B/A}:\GW(B)\rightarrow \GW(A)$ defined exactly as in the case of separable field extensions. A special case is that $B$ be a finite  \'{e}tale $k$-algebra where $k$ is a field. Then $B$ is isomorphic to the product $L_1\times\ldots\times L_s$ of finitely many separable finite field extensions $L_1,\ldots,L_s$ of $k$ and $\Tr_{B/k}=\sum_{i=1}^s\Tr_{L_i/k}$ and the trace form $\Tr_{B/k}(\beta)$ equals the sum of trace forms $\Tr_{B/k}(\beta)=\sum_{i=1}^s\Tr_{L_i/k}(\beta_{L_i})$ where $\beta_{L_i}$ is the restriction of $\beta$ to $L_i$. 
\end{remark}
After a coordinate change, assume that $l=(0:0:0:u:v)$, i.e., $l=\{x_1=x_2=x_3=0\}\subset\PP(k[x_1,x_2,x_3,u,v])$. We define coordinates on the Grassmannian around $l$.
Let $e_1,e_2,e_3,e_4,e_5$ be the standard basis of $k^5$. The line $l$ is the 2-plane in $k^5$ spanned by $e_4$ and $e_5$. Let $U:=\A^6=\Spec k[x,x',y,y',z,z']\subset \Gr(2,5)$ be the open affine subset of lines spanned by $xe_1+ye_2+ze_3+e_4$ and $x'e_1+y'e_2+z'e_3+e_5$. Note that the line $l$ is the origin in $U$.

\begin{remark}
In general, a $d$-dimensional smooth scheme $X$ does not have a covering by affine spaces and one has to use \emph{Nisnevich coordinates} around a closed point $x$, that is an \'{e}tale map $\psi:U\rightarrow \A^d$ for a Zariski neighborhood $U$ of a closed point $x$ such that $\psi$ induces an isomorphism on the residue field $k(x)$.
Since $\psi$ is \'{e}tale, it defines a trivialization of $TX\vert_U$ where $TX\rightarrow X$ is the tangent bundle.
Clearly, $U=\A^6$ are Nisnevich coordinates around $l$.
\end{remark}

Recall from \cite[Definition 21]{MR4247570} that a trivialization of $\mathcal{E}\vert_U$ is compatible with the relative orientation $\phi:\operatorname{Hom}(\det TG,\det \mathcal{E})\xrightarrow{\cong}L^{\otimes 2}$ with $L=\wedge^3 \mathcal{Q}^*\otimes (\wedge^2\mathcal{S}^*)^{\otimes 6}$ if the distinguished element of $\operatorname{Hom}(\det TG\vert_U,\det \mathcal{E}\vert_U)$ sending the distinguished basis of $\det TX\vert_U$ to the distinguished basis of $\det\mathcal{E}\vert_U$, is sent to a square by $\phi$.

We choose a trivialization of $\mathcal{E}\vert_U$ and show that it is compatible with our coordinates and relative orientation similarly as done in \cite[Proposition 45, Lemma 46, Proposition 47]{MR4247570} but we avoid change of basis matrices.
Let $\tilde{e}_1:=e_1$, $\tilde{e}_2:=e_2$, $\tilde{e}_3:=e_3$, $\tilde{e}_4:=xe_1+ye_2+ze_3+e_4$ and $\tilde{e}_5:=x'e_1+y'e_2+z'e_3+e_5$. Then $(\tilde{e}_1,\dots,\tilde{e}_5)$ is a basis of $(k[x,x',y,y',z,z']^5)$. Let $(\tilde{\phi}_1,\dots,\tilde{\phi}_5)$ be its dual basis.
\begin{lemma}
\label{lemma: trivialization of E}
The restrictions $TG\vert_{U}$ and $\mathcal{E}\vert_U$ have bases given by 
\begin{equation}
\label{eq:basis TGr}
\tilde{\phi_4}\otimes \tilde{e}_1,\tilde{\phi_5}\otimes\tilde{e}_1,\tilde{\phi_4}\otimes \tilde{e}_2,\tilde{\phi_5}\otimes\tilde{e}_2,\tilde{\phi_4}\otimes \tilde{e}_3,\tilde{\phi_5}\otimes\tilde{e}_3
\end{equation}
and 
\begin{equation}
\label{eq:basis Sym5}
\tilde{\phi}_4^5,\tilde{\phi}_4^4\tilde{\phi}_5,\tilde{\phi}_4^3\tilde{\phi}_5^2,\tilde{\phi}_4^2\tilde{\phi}_5^3,\tilde{\phi}_4\tilde{\phi}_5^4,\tilde{\phi}_5^5
\end{equation}
respectively.
The bases define trivializations of $TG\vert_U$ and $\mathcal{E}\vert_U$ and a distinguished basis element of \\$\operatorname{Hom}(\det TG\vert_U,\det\mathcal{E}\vert_U)$, namely the morphism that sends the wedge product of \eqref{eq:basis TGr} to the wedge product of \eqref{eq:basis Sym5}. The image of this distinguished basis element under $\phi$ defined in Proposition \ref{prop: relative orienatation} is 
\[(\tilde{e}_1\wedge \tilde{e}_2\wedge \tilde{e}_3)^{\otimes -2}\otimes (\tilde{\phi}_4\wedge\tilde{\phi}_5)^{\otimes 12}\]
which is a square. In particular, the chosen trivialization of $\mathcal{E}\vert_U$ is compatible with the relative orientation $\phi$.
\end{lemma}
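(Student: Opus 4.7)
The plan is to exhibit explicit trivializations of the relevant bundles on $U$, translate the distinguished morphism between the top exterior powers of $TG\vert_U$ and $\mathcal{E}\vert_U$ into an element of $L^{\otimes 2}$ via the canonical identifications of Proposition \ref{prop: relative orienatation}, and check that this element is manifestly of the form $a\otimes a$ for some explicit $a\in L$.

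First I would set up the trivializations. On $U$, the tautological bundle $\mathcal{S}\vert_U$ is free on $\tilde{e}_4,\tilde{e}_5$ by definition of the chart, so $\mathcal{S}^*\vert_U$ is free on the dual basis $\tilde{\phi}_4,\tilde{\phi}_5$. From the tautological short exact sequence $0\to \mathcal{S}\vert_U\to \glob_U^{\oplus 5}\to \mathcal{Q}\vert_U\to 0$, the quotient $\mathcal{Q}\vert_U$ is free on the images of $\tilde{e}_1,\tilde{e}_2,\tilde{e}_3$. Combining these trivializations with the natural isomorphisms $TG\cong \mathcal{S}^*\otimes \mathcal{Q}$ and $\mathcal{E}=\Sym^5\mathcal{S}^*$ gives precisely \eqref{eq:basis TGr} and \eqref{eq:basis Sym5} as bases, hence as trivializations.

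Second, I would compute the wedges in $\det TG\vert_U$ and $\det\mathcal{E}\vert_U$ under the canonical isomorphisms $\det(\mathcal{S}^*\otimes \mathcal{Q})\cong (\det\mathcal{S}^*)^{\otimes 3}\otimes (\det\mathcal{Q})^{\otimes 2}$ and $\det(\Sym^5\mathcal{S}^*)\cong (\det \mathcal{S}^*)^{\otimes 15}$. The ordered wedge of \eqref{eq:basis TGr} corresponds to $(\tilde{\phi}_4\wedge \tilde{\phi}_5)^{\otimes 3}\otimes(\tilde{e}_1\wedge \tilde{e}_2\wedge \tilde{e}_3)^{\otimes 2}$, where in the second factor $\tilde{e}_i$ is identified with its image in $\mathcal{Q}\vert_U$, while the wedge of \eqref{eq:basis Sym5} corresponds to $(\tilde{\phi}_4\wedge \tilde{\phi}_5)^{\otimes 15}$. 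The distinguished element of $\operatorname{Hom}(\det TG\vert_U,\det\mathcal{E}\vert_U)\cong (\det TG\vert_U)^{\vee}\otimes \det\mathcal{E}\vert_U$ sending one to the other is therefore
\[(\tilde{e}_1\wedge \tilde{e}_2\wedge \tilde{e}_3)^{\otimes -2}\otimes (\tilde{\phi}_4\wedge \tilde{\phi}_5)^{\otimes 12},\]
which upon passing through $\phi$ lies in $L^{\otimes 2}=(\det\mathcal{Q}^*)^{\otimes 2}\otimes(\det\mathcal{S}^*)^{\otimes 12}$ and is visibly the square of $(\tilde{e}_1\wedge \tilde{e}_2\wedge \tilde{e}_3)^{\otimes -1}\otimes(\tilde{\phi}_4\wedge \tilde{\phi}_5)^{\otimes 6}\in L$.

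The main subtlety is pinning down the sign of the canonical isomorphism $\det(V\otimes W)\cong (\det V)^{\otimes r_W}\otimes (\det W)^{\otimes r_V}$, which depends on the order in which the simple tensors $\tilde{\phi}_i\otimes \tilde{e}_j$ (with $\tilde{e}_j$ read in $\mathcal{Q}$) appear in the wedge. A different ordering convention would introduce a factor of $\pm 1$, and since $\langle -1\rangle$ is generally not a square in $\GW(k)$ a spurious minus sign would actually invalidate the conclusion. So one must fix a convention at the outset and verify that the ordering chosen in \eqref{eq:basis TGr} is compatible with it; the rest of the argument is formal, but this bookkeeping is the one place where care is required.
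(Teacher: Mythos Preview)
Your proposal is correct and follows essentially the same approach as the paper: trivialize $\mathcal{S}^*\vert_U$ and $\mathcal{Q}\vert_U$ via $\tilde{\phi}_4,\tilde{\phi}_5$ and $\tilde{e}_1,\tilde{e}_2,\tilde{e}_3$, push the wedges of \eqref{eq:basis TGr} and \eqref{eq:basis Sym5} through the canonical isomorphisms $\det(\mathcal{S}^*\otimes\mathcal{Q})\cong(\det\mathcal{S}^*)^{\otimes 3}\otimes(\det\mathcal{Q})^{\otimes 2}$ and $\det(\Sym^5\mathcal{S}^*)\cong(\det\mathcal{S}^*)^{\otimes 15}$, and read off $(\tilde{e}_1\wedge\tilde{e}_2\wedge\tilde{e}_3)^{\otimes -2}\otimes(\tilde{\phi}_4\wedge\tilde{\phi}_5)^{\otimes 12}$ as a manifest square. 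Your additional remark on the sign dependence of the ordering in \eqref{eq:basis TGr} is a valid point that the paper leaves implicit.
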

\begin{proof}

The canonical isomorphism $\det TG\xrightarrow{\cong}(\det \mathcal{S}^*)^{\otimes 3}\otimes (\det \mathcal{Q})^{\otimes 2}$ in Lemma \ref{prop: relative orienatation} sends \[\tilde{\phi_4}\otimes \tilde{e}_1\wedge\tilde{\phi_5}\otimes\tilde{e}_1\wedge\tilde{\phi_4}\otimes \tilde{e}_2\wedge\tilde{\phi_5}\otimes\tilde{e}_2\wedge\tilde{\phi_4}\otimes \tilde{e}_3\wedge\tilde{\phi_5}\otimes\tilde{e}_3\mapsto(\tilde{\phi}_4\wedge\tilde{\phi}_5)^3\otimes (\tilde{e}_1\wedge\tilde{e}_2\wedge\tilde{e}_3)^2\]
and the canonical isomorphism $\det \Sym^5\mathcal{S}^*\xrightarrow{\cong }(\det \mathcal{S}^*)^{15}$ sends 
\[\tilde{\phi}_4^5\wedge\tilde{\phi}_4^4\tilde{\phi}_5\wedge\tilde{\phi}_4^3\tilde{\phi}_5^2\wedge\tilde{\phi}_4^2\tilde{\phi}_5^3\wedge\tilde{\phi}_4\tilde{\phi}_5^4\wedge\tilde{\phi}_5^5\mapsto(\tilde{\phi}_4\wedge\tilde{\phi}_5)^{5+4+3+2+1}.\]
It follows that $\phi$ sends the distinguished basis element to 
$(\tilde{\phi}_4\wedge\tilde{\phi}_5)^{15-3}\otimes (\tilde{e}_1\wedge\tilde{e}_2\wedge\tilde{e}_3)^{-2}$.

\end{proof}


\subsection{The local index at an isolated, simple zero of $\sigma_f$}
\label{subsection: the local index an an isolated zero}
Let $X=\{f=0\}\subset \PP^4$ be a quintic threefold and let $l\subset X$ be an isolated, simple line defined over $k$. Let $u,v,x_1,x_2,x_3$ be the coordinates on $\PP^4$. Again we assume that $l=\{x_1=x_2=x_3=0\}\subset \PP^4$. Then the definining polynomial of $X$ is of the form $f=x_1P_1(u,v)+x_2P_2(u,v)+x_3P_3(u,v)+Q(u,v,x_1,x_2,x_3)$ where $Q\in (x_1,x_2,x_3)^2$, that is, $Q$ vanishes to degree at least two on $l$, and 
\begin{align*}
P_1(u,v)&=a_{4,1}u^4+a_{3,1}u^3v+a_{2,1}u^2v^2+a_{1,1}uv^3+a_{0,1}v^4,\\
P_2(u,v)&=a_{4,2}u^4+a_{3,2}u^3v+a_{2,2}u^2v^2+a_{1,2}uv^3+a_{0,2}v^4,\\
P_3(u,v)&=a_{4,3}u^4+a_{3,3}u^3v+a_{2,3}u^2v^2+a_{1,3}uv^3+a_{0,3}v^4
\end{align*}
are homogeneous degree 4 polynomials in $u$ and $v$. 
\begin{proposition}
\label{prop: local index algebraically}
The local index of $\sigma_f$ at $l$ is equal to $\langle\det A_{P_1,P_2,P_3}\rangle\in \operatorname{GW}(k)$ with
\begin{equation}
\label{eq: matrix A}
A_{P_1,P_2,P_3}=\begin{pmatrix}
a_{4,1} & 0 & a_{4,2}& 0& a_{4,3}& 0\\
a_{3,1} & a_{4,1} & a_{3,2}& a_{4,2}& a_{3,3}& a_{4,3}\\
a_{2,1} & a_{3,1} & a_{2,2}& a_{3,2}& a_{2,3}& a_{3,3}\\
a_{1,1} & a_{2,1} & a_{1,2}& a_{2,2}& a_{1,3}& a_{2,3}\\
a_{0,1} & a_{1,1} & a_{0,2}& a_{1,2}& a_{0,3}& a_{1,3}\\
0 & a_{0,1} & 0& a_{0,2}& 0& a_{0,3}
\end{pmatrix}
\end{equation}
where the entries of the matrix $A$ are coefficients of $P_1$, $P_2$ and $P_3$.
\end{proposition}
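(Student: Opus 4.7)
The plan is as follows. By Lemma \ref{lemma: separable field ext} the zero of $\sigma_f$ at $l$ is simple and isolated, and by Lemma \ref{lemma: trivialization of E} the coordinates $(x,x',y,y',z,z')$ on $U$ together with the trivialization of $\mathcal{E}|_U$ by $\tilde\phi_4^5,\ldots,\tilde\phi_5^5$ are compatible with the relative orientation. Under these hypotheses, the local index $\ind_l\sigma_f$ equals $\langle\det J\rangle$, where $J$ is the Jacobian at the origin of the coordinate representation of $\sigma_f$; this is the standard formula for the local $\A^1$-degree at a simple zero (see e.g.\ \cite{MR4247570}). So the proposition reduces to computing $J$ and checking that it equals $A_{P_1,P_2,P_3}$ from \eqref{eq: matrix A}.

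To obtain the coordinate representation of $\sigma_f$, I parametrize the moving line: the point $u\tilde e_4+v\tilde e_5$ of the line over $(x,x',y,y',z,z')\in U$ has $\PP^4$-coordinates $(ux+vx',\,uy+vy',\,uz+vz',\,u,\,v)$. Substituting into $f=x_1P_1(u,v)+x_2P_2(u,v)+x_3P_3(u,v)+Q$ yields
\[\sigma_f=(ux+vx')P_1(u,v)+(uy+vy')P_2(u,v)+(uz+vz')P_3(u,v)+Q(ux+vx',\,uy+vy',\,uz+vz',\,u,\,v),\]
viewed as a degree-$5$ polynomial in $u=\tilde\phi_4$ and $v=\tilde\phi_5$; its six coefficients in $k[x,x',y,y',z,z']$ are exactly the components of $\sigma_f$ in the chosen trivialization of Lemma \ref{lemma: trivialization of E}.

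The key observation is that $Q\in(x_1,x_2,x_3)^2$ forces the substituted $Q$-term to lie in the ideal $(x,x',y,y',z,z')^2$ and hence contribute nothing to $J$. The three remaining terms are already linear in $(x,\ldots,z')$, and expanding $(ux+vx')P_j$ yields the coefficient of $u^{5-i}v^i$ modulo quadratic terms as
\[xa_{4-i,1}+x'a_{5-i,1}+ya_{4-i,2}+y'a_{5-i,2}+za_{4-i,3}+z'a_{5-i,3}\]
with the convention $a_{5,j}=a_{-1,j}=0$; the shift by one in the $x',y',z'$-columns comes from multiplication by $v$ in the factor $ux+vx'$ (resp.\ $uy+vy'$, $uz+vz'$). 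Assembling these six linear forms for $i=0,\ldots,5$ as the rows of $J$ reproduces exactly \eqref{eq: matrix A}, whence $J=A_{P_1,P_2,P_3}$ and $\ind_l\sigma_f=\langle\det A_{P_1,P_2,P_3}\rangle$. No deeper obstacle arises: the heart of the argument is this shifted-block bookkeeping together with the local-degree formula at a simple zero.
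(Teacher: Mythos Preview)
Your proof is correct and follows essentially the same route as the paper: both arguments write the section in the local coordinates and compatible trivialization, invoke the simple-zero formula $\ind_l\sigma_f=\langle\det J\rangle$ for the local $\A^1$-degree, observe that $Q\in(x_1,x_2,x_3)^2$ contributes nothing to the Jacobian at the origin, and read off the columns of $J$ from $uP_j$ and $vP_j$ to obtain exactly $A_{P_1,P_2,P_3}$. One small remark: the hypothesis that $l$ is isolated and simple is part of the standing assumptions of the subsection, so you need not invoke Lemma~\ref{lemma: separable field ext} (which only covers general $X$); the paper cites \cite{MR3909901} rather than \cite{MR4247570} for the simple-zero formula, but this is cosmetic.
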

\begin{proof}
Note that $l$ is 0 in the coordinates from subsection \ref{subsection: local coordinates}.
So the local index at $l$ is the local $\A^1$-degree at 0 in the chosen coordinates and trivialization of $\mathcal{E}$. 
Since $l$ is simple, that is, 0 is a simple zero in the chosen coordinates and trivialization, it follows from \cite{MR3909901} that the local $\A^1$-degree at 0 is equal to the determinant of the jacobian matrix of $\sigma_f$ in the local coordinates and trivialization defined in Lemma \ref{lemma: trivialization of E} evaluated at 0.
The section $\sigma_f$ in those local coordinates and trivialization is the morphism $(f_1,\dots,f_6):\A^6\rightarrow \A^6$ defined by the 6 polynomials which are the coefficients of $u^5,u^4v,\dots,v^5$ in 
\begin{align*}
&f(xu+x'v,yu+y'v,zu+z'v,u,v)\\
&=(xu+x'v)P_1(u,v)+(yu+y'v)P_2(u,v)+(zu+z'v)P_3(u,v)+Q(xu+x'v,yu+y'v,zu+z'v,u,v).
\end{align*}
Since $Q$ vanishes to at least order two on the line, the partial derivative of $Q$ in any of the six directions $x,x',y,y',z,z'$ evaluated at 0 vanishes. So the partial derivative $\frac{f(xu+x'v,yu+y'v,zu+z'v,u,v)}{\partial x}\vert_0$ in $x$-direction evaluated at 0 is $uP_1(u,v)$ and the coeffiecients of $u^5,u^4v,\dots,v^5$ in $uP_1(u,v)$ are $a_{4,1},a_{3,1},a_{2,1},a_{1,1},a_{0,1},0$, that is, the first column of \eqref{eq: matrix A}. Similarly, one computes the remaining columns of \eqref{eq: matrix A}.
\end{proof}
\begin{remark}
\label{remark: simple line}
We have seen in Lemma \ref{lemma: separable field ext} that on a general quintic threefold, all lines are isolated and simple. An isolated line $l$ is not simple if and only if the derivative of $\sigma_f$ at $l$ vanishes if and only if $\det A_{P_1,P_2,P_3}=0$. In the case that $l$ is isolated but not simple, one can use the \emph{EKL-form} \cite{MR3909901} to compute the corresponding local index. 
\end{remark}
\begin{lemma}
\label{lemma: detA=0}
We have $\det A_{P_1,P_2,P_3}=0$ if and only if there are degree 1 homogeneous polynomials $r_1,r_2,r_3\in k[u,v]$, not all zero, such that $r_1P_1+r_2P_2+r_3P_3=0$.
\end{lemma}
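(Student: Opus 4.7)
The plan is to observe that the matrix $A_{P_1,P_2,P_3}$ is exactly the matrix of the $k$-linear map
\[\mu:k^2\oplus k^2\oplus k^2\longrightarrow k[u,v]_5,\qquad (r_1,r_2,r_3)\longmapsto r_1P_1+r_2P_2+r_3P_3,\]
where on the left I identify $k^2$ with the space $k[u,v]_1$ of degree $1$ homogeneous polynomials (via $(\alpha,\beta)\leftrightarrow \alpha u+\beta v$), and on the right $k[u,v]_5$ is the $6$-dimensional space of degree $5$ homogeneous polynomials with ordered basis $u^5,u^4v,u^3v^2,u^2v^3,uv^4,v^5$.

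First I would verify that the columns of $A_{P_1,P_2,P_3}$ given in \eqref{eq: matrix A} are exactly the coordinate vectors, in the chosen basis of $k[u,v]_5$, of the polynomials $uP_1,vP_1,uP_2,vP_2,uP_3,vP_3$. This is an immediate unpacking of the definition of the $P_i$: writing out $uP_i(u,v)$ and reading off coefficients recovers the $i$-th pair of columns. Since both source and target of $\mu$ are $6$-dimensional, $\mu$ is represented by the $6\times 6$ matrix $A_{P_1,P_2,P_3}$.

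From here the lemma is linear algebra. We have $\det A_{P_1,P_2,P_3}=0$ if and only if $\mu$ has nontrivial kernel, i.e.\ if and only if there exist $(r_1,r_2,r_3)\in k[u,v]_1^{\oplus 3}$, not all zero, with $r_1P_1+r_2P_2+r_3P_3=0$. That is precisely the statement of the lemma, so no further argument is needed; there is no real obstacle, only the bookkeeping check that columns $2i-1$ and $2i$ of $A_{P_1,P_2,P_3}$ encode $uP_i$ and $vP_i$ respectively.
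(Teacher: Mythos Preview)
Your proof is correct. The paper does not give an argument of its own here; it simply cites Lemma~3.2.2~(3) of Finashin--Kharlamov. Your approach is the natural self-contained proof and, in fact, the identification of the columns of $A_{P_1,P_2,P_3}$ with the coefficient vectors of $uP_1,vP_1,uP_2,vP_2,uP_3,vP_3$ is exactly what the paper establishes in the proof of Proposition~\ref{prop: local index algebraically} just above, so your argument fits seamlessly with the surrounding text.
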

\begin{proof}
This is Lemma 3.2.2 (3) in \cite{MR4230388}. 
\end{proof}
\section{Definition of the type}
\label{section: defn of type}
In this section we provide the definition of the \emph{type} of an isolated, simple line.
For the definition of the type of a line, we need to work over a field of characteristic not equal to 2 because there are involutions involved.
Again we assume by base change that $k(l)=k$ and that $l=\{x_1=x_2=x_3=0\}\subset \{f=0\}=X\subset\PP(k[x_1,x_2,x_3,u,v])$.
Recall from section \ref{subsection: the local index an an isolated zero} that under these assumptions the polynomial $f$ defining the quintic threefold $X$ is of the form $f=x_1P_1+x_2P_2+x_3P_3+Q$ where $Q\in (x_1,x_2,x_3)^2$ and $P_1,P_2,P_3\in k[u,v]_4$.

Let $C:l\rightarrow \PP^2$ be the degree 4 rational plane curve $(u:v)\mapsto (P_1(u,v):P_2(u,v):P_3(u,v))$. Then $C$ has the following geometric description: The 3-planes in $\PP^4$ containing $l$ can be parametrized by a $\PP^2$.
Identify the tangent space $T_pX$ at a point $p\in l$ in $X$ with a 3-plane in $\PP^4$. Then the corresponding 4-plane in $k^5$ has normal vector $(P_1(p),P_2(p),P_3(p),0,0)$. Therefore, $C$ maps a point $p\in l$ to its tangent space $T_pX$  in $\PP^4$, i.e., $C$ is the \emph{Gau\ss} \emph{map}. By the Castelnuovo count (see e.g. \cite{MR770932}),
a general degree 4 rational plane curve has 3 double points.
Furthermore, Finashin and Kharlamov show that the number of double points on $C$, given it is finite, is always 3 (possibly counted with multiplicities) (see \cite[Proposition 4.3.3]{MR4230388}). However, there could also be infinitely many double points on $C$. We will deal with this case in \ref{subsubsection: infinitely many double points}. For now we assume that $C$ has 3 double points. That means that there are 3 pairs of points $(r_i,s_i)$ on the line $l$ which have the same tangent space in $X$, i.e., $T_{r_i}X=T_{s_i}X$ for $i=1,2,3$. 

Let $M$ be one double point of the curve $C$ with field of definition (= the residue field of $M$ in $\PP^2$) $L_M$ and let $D$ be the corresponding degree 2 divisor on $l$. Let $H_t$ be the pencil of lines in $\PP^2$ through $M$. Then $D_t=H_t\cap C=D+D_t^r$ defines a pencil of degree $4$ divisors on $l$. 
\begin{lemma}
\label{lemma: base point free}
When $l$ is a simple line, the residual pencil $D_t^r$ of degree 2 divisors on $l$ is base point free, that means, there is no point on $l$ where every element of the pencil $D_t^r$ vanishes.
\end{lemma}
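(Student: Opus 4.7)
The plan is to argue the contrapositive: if $D_t^r$ has a base point, I will produce a nontrivial degree-one syzygy among $P_1,P_2,P_3$, which by Lemma \ref{lemma: detA=0} forces $\det A_{P_1,P_2,P_3}=0$, contradicting the simplicity of $l$ via Proposition \ref{prop: local index algebraically} and Remark \ref{remark: simple line}. First I would base change to the field of definition $L_M$ of $M$ (harmless, since both the base-point condition and the vanishing of $\det A_{P_1,P_2,P_3}$ are preserved under field extension), and then choose coordinates on $\PP^2$ so that $M=(0:0:1)$. The pencil of lines through $M$ is then $\{\alpha_0 Y_0+\alpha_1 Y_1=0\}_{(\alpha_0:\alpha_1)\in\PP^1}$, and pulling back under the Gau\ss\ map $C=(P_1:P_2:P_3)$ shows that $H_t\cap C$ is cut out on $l$ by the section $\alpha_0 P_1+\alpha_1 P_2\in H^0(l,\glob(4))$.

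Since $M$ is exactly a double point, $C^{-1}(M)=D$ scheme-theoretically, so both $P_1$ and $P_2$ are divisible by the degree-two form $\ell\in L_M[u,v]_2$ defining $D$; write $P_i=\ell\cdot P_i'$ with $P_i'\in L_M[u,v]_2$ for $i=1,2$. The pencil then factors as $\ell\cdot(\alpha_0 P_1'+\alpha_1 P_2')$, so $D_t^r$ is precisely the pencil cut out by $\alpha_0 P_1'+\alpha_1 P_2'$. A base point of $D_t^r$ is therefore a common zero of the binary quadratic forms $P_1'$ and $P_2'$; the existence of such a common zero is equivalent (via the $4\times 4$ Sylvester matrix for $P_1', P_2'$) to the existence of linear forms $\rho_1,\rho_2\in L_M[u,v]_1$, not both zero, with $\rho_1 P_1'+\rho_2 P_2'=0$. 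Multiplying by $\ell$ yields the desired nontrivial degree-one syzygy $\rho_1 P_1+\rho_2 P_2+0\cdot P_3=0$, and Lemma \ref{lemma: detA=0} applied over $L_M$ then gives $\det A_{P_1,P_2,P_3}=0$.

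The only bookkeeping step deserving real care—and what I expect to be the main obstacle—is justifying $\ell\mid P_1$ and $\ell\mid P_2$, i.e., that $C^{-1}(M)=D$ scheme-theoretically (not merely set-theoretically). In the nodal case $D=r+s$ with $r\neq s$ this amounts to $P_i(r)=P_i(s)=0$ for $i=1,2$, which is immediate from $C(r)=C(s)=M$. In the cuspidal case $D=2r$ one uses that $C$ fails to be an immersion at $r$, which forces both $P_1/P_3$ and $P_2/P_3$ to have vanishing first derivative at $r$, and hence $P_1,P_2$ each vanish to order $\geq 2$ at $r$; this uses in an essential way that $M$ is only a double point (so no higher-order pathology in the preimage can occur). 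Once this factorization is in hand, the remainder of the argument is pure linear algebra on $\PP^1$.
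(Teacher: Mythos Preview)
Your argument is correct and follows essentially the same route as the paper's: after normalizing coordinates for $M$, factor the degree-two form defining $D$ out of the two relevant $P_i$, and observe that a common zero of the quotients produces a nontrivial degree-one syzygy among $P_1,P_2,P_3$, whence $\det A_{P_1,P_2,P_3}=0$ by Lemma~\ref{lemma: detA=0}. The only differences are cosmetic---the paper places $M=(1:0:0)$, so it is $P_2,P_3$ that factor rather than $P_1,P_2$---and that you treat the cuspidal case $D=2r$ explicitly, whereas the paper, working under the surrounding assumption of three distinct nodes, does not.
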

\begin{proof}

This follows from \cite[Lemma 4.1.1 and $\S5.1$]{MR4230388}. However, we reprove the result in our notation and setting.
Without loss of generality, we can assume that $M=(1:0:0)$. That means, there are two points on the line $l$ that are sent to $(1:0:0)$ by $C$. Let $Q$ be the homogeneous degree 2 polynomial in $u$ and $v$ that vanishes on those two points. It follows that $Q$ divides both $P_2$ and $P_3$.
Let $Q_1^M:=\frac{P_2}{Q}$ and $Q_2^M:=\frac{P_3}{Q}$ be the two degree 2 homogeneous polynomials in $u$ and $v$. The support of $D_t^r$ is $\{t_0Q_1^M+t_1Q_2^M=0\}\subset l\times \PP^1\cong \PP^1\times \PP^1$. If $D_t^r$ had a basepoint, then $Q_1^M$ and $Q_2^M$ would have a common factor and thus $P_2$ and $P_3$ had a common degree 3 factor and there would be nonzero degree 1 homogeneous polynomials $r_2$ and $r_3$ in $u$ and $v$ such that $r_2P_2+r_3P_3=0\cdot P_1+r_2P_2+r_3P_3=0$. But then $\det A_{P_1,P_2,P_3}=0$ by Lemma \ref{lemma: detA=0} where $A_{P_1,P_2,P_3}$ is the matrix from Proposition \ref{prop: local index algebraically}, that is, $l$ is not a simple line.
\end{proof}
It follows that $D_t^r$ defines a double covering $(Q_1^M:Q_2^M):\PP^1_{L_M}\rightarrow \PP^1_{L_M}$. 
\begin{definition}
\label{def:Segre involution and fixed points}
We call the nontrivial element of the Galois group of the double covering $(Q_1^M,Q_2^M):\PP^1_{L_M}\rightarrow \PP^1_{L_M}$ a \emph{Segre involution} and denote it by
\[i_M:\PP^1_{L_M}\rightarrow\PP^1_{L_M}.\]
Its fixed points are called \emph{Segre fixed points}.
\end{definition}
Figure \ref{img:grafik-dummy} illustrates what the Segre involution $i_M$ does. Each element $H_{t_0}$ of the pencil $H_t$ of lines through $M$ intersects the curve $C$ in two additional points $C(p)$ and $C(q)$. The involution $i_M$ swaps the preimages $p$ and $q$ of those two points on $l$.

Geometrically, the Segre involution can be described as follows.
Let $T=T_s(X\otimes L_M)=T_r(X\otimes L_M)$ where $r,s\in \PP^1$ with $C(s)=C(r)=M$. Then to a point $p\in l\otimes L_M$ there exists exactly one $q\in l\otimes L_M$ such that $T\cap T_p(X\otimes L_M)=T\cap (T_qX\otimes L_M)$. The involution $i_M$ swaps $p$ and $q$. 
\begin{figure}
	\centering
	\includegraphics[width=0.9\columnwidth]{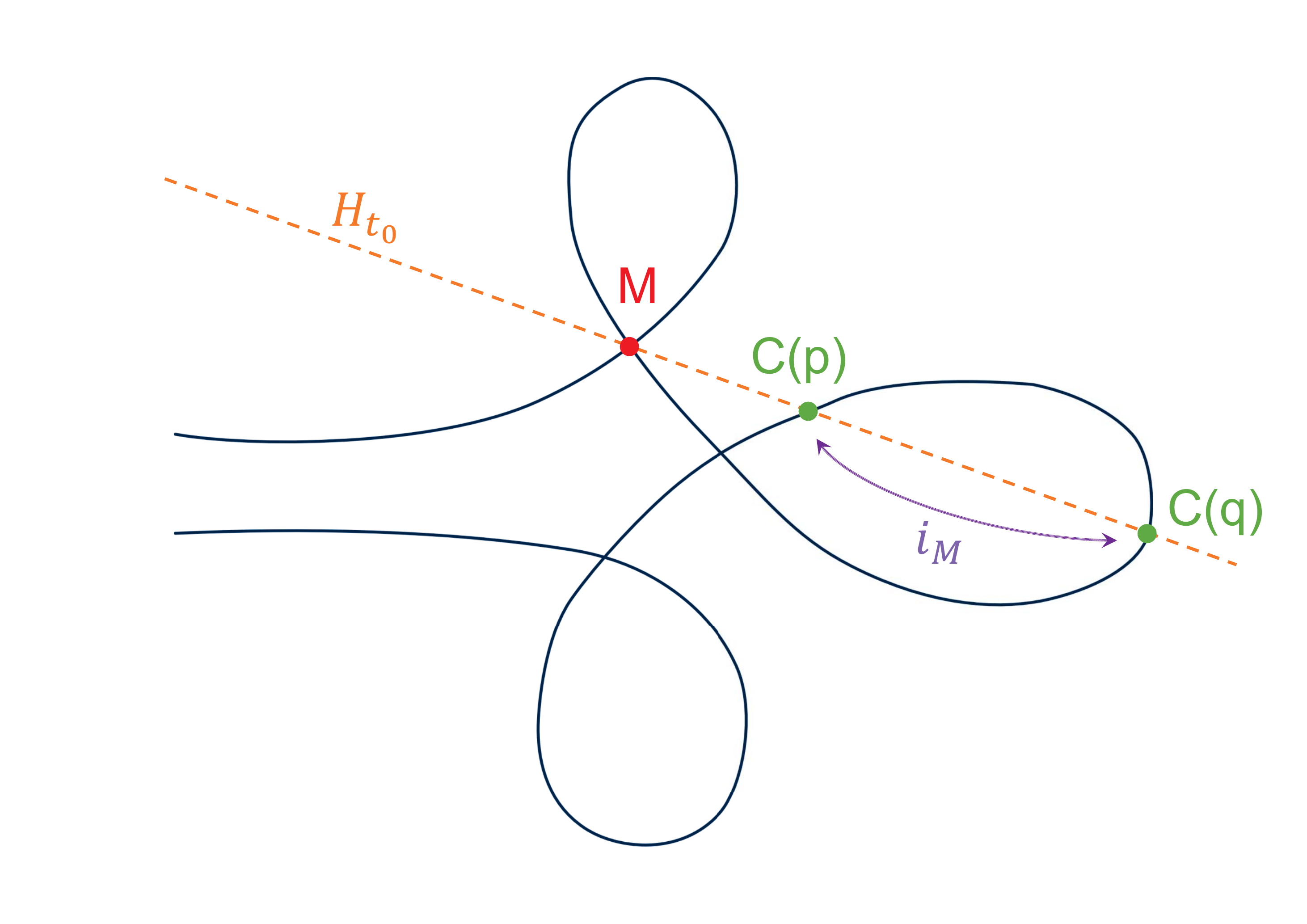}
	\caption{involution $i_M$}
	\label{img:grafik-dummy}
\end{figure}

The Segre fixed points of $i_M$ are defined over $L_M(\sqrt{\alpha_M})$ for $\alpha_M\in L_M^{\times}/(L_M^{\times})^2$. We say $\alpha_M$ is the \emph{degree} of the Segre involution $i_M$.
\begin{remark}
The involution $i_M$ is a self map of the motivic sphere $\PP^1_{L_M}$ and thus, we can assign an $\A^1$-degree $\deg^{\A^1}(i_M)$ to $i_M$ valued in $\GW(L_M)$ \cite{MR2934577}. It follows from \cite[Corollary 13]{MR4247570} that $\langle-1\rangle\deg^{\A^1}(i_M)=\langle\alpha_M\rangle$ in $\GW(L_M)$.
\end{remark}
\begin{lemma}
\label{lemma: degree i=Res}
The degree of the Segre involution $i_M:\PP^1_{L_M}\rightarrow\PP^1_{L_M}$ is equal to the resultant $\Res(Q_1^M,Q_2^M)$ in $L_M^{\times}/(L_M^{\times})^2$.
\end{lemma}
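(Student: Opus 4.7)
My plan is a direct coordinate computation. The involution $i_M$ is by definition the deck transformation of the degree two cover $(Q_1^M : Q_2^M) : \PP^1_{L_M} \to \PP^1_{L_M}$, so a point $p\in\PP^1_{L_M}$ is fixed by $i_M$ iff its two preimages under $(Q_1^M:Q_2^M)$ collide at $p$, iff the map is ramified at $p$. For a degree two map of $\PP^1$ to itself the ramification divisor is cut out by the Jacobian
\[
W(u,v) \;=\; \partial_u Q_1^M \cdot \partial_v Q_2^M \;-\; \partial_v Q_1^M \cdot \partial_u Q_2^M,
\]
a binary quadratic form in $(u,v)$. Hence the Segre fixed subscheme equals $\{W = 0\}\subset\PP^1_{L_M}$, and by the definition of $\alpha_M$ its square class is that of the discriminant of $W$. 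The problem therefore reduces to showing
\[
\operatorname{disc}(W) \;\equiv\; \Res(Q_1^M,Q_2^M)\quad\text{in } L_M^\times/(L_M^\times)^2.
\]

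Writing $Q_1^M = a_0 u^2 + a_1 uv + a_2 v^2$ and $Q_2^M = b_0 u^2 + b_1 uv + b_2 v^2$, a short expansion gives
\[
\tfrac{1}{2}W(u,v) \;=\; (a_0 b_1 - a_1 b_0)\,u^2 \;+\; 2(a_0 b_2 - a_2 b_0)\,uv \;+\; (a_1 b_2 - a_2 b_1)\,v^2,
\]
whose discriminant equals $4\bigl[(a_0 b_2 - a_2 b_0)^2 - (a_0 b_1 - a_1 b_0)(a_1 b_2 - a_2 b_1)\bigr]$. On the other hand, expanding the $4\times 4$ Sylvester determinant for the resultant of two binary quadratics produces exactly the bracketed expression, so $\operatorname{disc}(\tfrac{1}{2}W) = 4\,\Res(Q_1^M,Q_2^M)$. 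The factor $4$ is a square, so this gives the claimed equality of classes in $L_M^\times/(L_M^\times)^2$.

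I do not anticipate any serious obstacle; the argument is essentially bookkeeping. Two points are worth checking carefully: first, that Lemma \ref{lemma: base point free} (simplicity of $l$) guarantees $Q_1^M$ and $Q_2^M$ are coprime, so $(Q_1^M:Q_2^M)$ is a genuine finite degree two morphism and $\Res(Q_1^M,Q_2^M)\neq 0$; and second, that identifying the ramification locus of a degree two map $\PP^1\to\PP^1$ with $\{W=0\}$ uses $\operatorname{char} L_M\neq 2$, which holds by hypothesis. Both are standard but deserve a line in the write-up.
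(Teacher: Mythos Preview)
Your argument is correct and is essentially the explicit computation underlying the paper's one-line citation of Kass--Wickelgren's Proposition~14: identify the Segre fixed points with the ramification locus of the double cover $(Q_1^M:Q_2^M)$, write down the binary quadratic cutting it out (your Wronskian $W$), and verify that its discriminant agrees with $\Res(Q_1^M,Q_2^M)$ up to a square. The two caveats you flag---coprimality of $Q_1^M,Q_2^M$ from Lemma~\ref{lemma: base point free} and $\operatorname{char}\neq 2$---are exactly the right things to record.
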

\begin{proof}
This follows from the proof of \cite[Proposition 14]{MR4247570}.
\end{proof}

We want to define the type of $l$ to be the product of the three Segre involutions. We need a well-defined element of $k^{\times}/(k^{\times})^2$ in order to get a well-defined element of $\GW(k)$. If all three double points are defined over $k$, the degrees of the Segre involutions are elements of $k^{\times}/(k^{\times})^2$, so the product is a well-defined element of $k^{\times }/(k^{\times })^2$. 
If a double point is defined over a proper field extension $L$ over $k$, its degree is an element of $L^{\times}/(L^{\times})^2$ and its contribution to the product of the degrees of the three involutions is only well-defined up to a square in $L^{\times}$ which might not be a square in $k^{\times}$. However, we will see that the double points and their degrees come in Galois orbits. The product of Galois conjugate degrees is equal to the norm and lies in $k^{\times}$. In particular, this is well-defined up to a square in $k^{\times}$.

\begin{definition}
\label{defn: type}
Let $\M$ be the locus of double points of $C$ and assume $\operatorname{char}k\neq 2$.
The \emph{type} of the (isolated and simple) $k$-line $l$ on $X$ is 
\begin{equation}
\label{eq: defn of type}
\operatorname{Type}(l):=\langle\prod (N_{L_M/k}(\alpha_M))\rangle \in \GW(k)
\end{equation}
where the product runs over the Galois orbits of $\M$. 
\end{definition}
\begin{remark}
Recall that a line on a smooth cubic surface gives rise to a (single) Segre involution in a similar way: To each point on the line there is exactly one other point with the same tangent space and the Segre involution swaps those two points. In \cite{MR4247570} Kass and Wickelgren define the \emph{type} of a line on a smooth cubic surface to be the degree of this Segre involution. 
\end{remark}

\begin{remark}
If all three double points are pairwise different, the fixed points of the involutions $i_M$ form a degree 6 divisor on the line $l$ which corresponds to an \'{e}tale $k$-algebra $A$. Then the type of $l$ is equal to the discriminant of $A$ over $k$
\[\operatorname{Type}(l)=\langle\operatorname{disc}(A/k)\rangle\in \GW(k).\]
Recall that the type of a line on a smooth cubic surface is also the discriminant of the fixed point scheme of a Segre involution \cite[Corollary 13]{MR4247570}.
\end{remark}

\section{The type is equal to the local index}

\begin{theorem}
\label{thm: main thm}
Let $X=\{f=0\}\subset \PP^4$ be a quintic threefold and $l\subset X$ be a simple and isolated line with field of definition a field $k$ of characteristic not equal to 2.
The type of $l$ is equal to the local index at $l$ of $\sigma_f$ in $\GW(k)$.
\end{theorem}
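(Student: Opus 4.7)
The plan is to reduce the theorem, via the base-change compatibility of the local index (the trace formula \cite[Prop.~34]{MR4247570}) together with the corresponding compatibility of the type under the norm in \'etale algebras (Remark \ref{remark: etale algebra}), to the case where $l$ is $k$-rational. Then Proposition \ref{prop: local index algebraically} identifies $\ind_l\sigma_f$ with $\langle\det A_{P_1,P_2,P_3}\rangle$, while Lemma \ref{lemma: degree i=Res} writes the type as
\[\operatorname{Type}(l)=\Bigl\langle\prod_M N_{L_M/k}\bigl(\Res(Q_1^M,Q_2^M)\bigr)\Bigr\rangle,\]
and the theorem reduces to the equality of these two classes in $k^\times/(k^\times)^2$.

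I would prove this identity first on the dense open locus where the three double points of the Gauss map $C=(P_1{:}P_2{:}P_3)\colon l\to\PP^2$ are $k$-rational and in general position in $\PP^2$. On this locus, applying a $\mathrm{GL}_3(k)$-transformation to $(P_1,P_2,P_3)$—which multiplies $\det A$ by the square $(\det M)^2$—one may assume the three double points sit at $(1{:}0{:}0)$, $(0{:}1{:}0)$, $(0{:}0{:}1)$, so that $P_1=c_1 Q_2Q_3,\ P_2=c_2 Q_1Q_3,\ P_3=c_3 Q_1Q_2$ for suitable $Q_i\in k[u,v]_2$ cutting out the preimages of the $M_i$ on $l$; absorbing the scalars $c_i$ into the $Q_i$ is another square change, so I may take $c_i=1$. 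In this normal form, Lemma \ref{lemma: degree i=Res} gives $\alpha_{M_i}\equiv\Res(Q_j,Q_k)$ modulo squares, so $\operatorname{Type}(l)=\langle\Res(Q_1,Q_2)\Res(Q_1,Q_3)\Res(Q_2,Q_3)\rangle$.

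The heart of the proof is then the algebraic identity
\[\det A_{Q_2Q_3,\, Q_1Q_3,\, Q_1Q_2}=c\cdot\Res(Q_1,Q_2)\,\Res(Q_1,Q_3)\,\Res(Q_2,Q_3)\]
for a universal integer constant $c$. I would establish it in four steps: (i) the left-hand side vanishes along each hypersurface $\Res(Q_i,Q_j)=0$ by exhibiting the explicit linear syzygy $(0,\tilde Q_i,-\tilde Q_j)$ (up to permutation of indices), where $Q_i=\ell\tilde Q_i$ and $Q_j=\ell\tilde Q_j$ share the linear factor $\ell$, and invoking Lemma \ref{lemma: detA=0}; (ii) both sides are homogeneous of degree $4$ in each $Q_i$'s coefficients; (iii) unique factorization applies, since the three binary-quadratic resultants are irreducible polynomials in disjoint sets of variables, so the product divides $\det A$ and the quotient is constant; (iv) $c$ is pinned down by a specialization, for instance $Q_1=u^2,\ Q_2=(u-v)^2,\ Q_3=v^2$, where a direct check yields $c=1$. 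This proves $\langle\det A\rangle=\operatorname{Type}(l)$ on the generic locus.

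The main obstacle is extending the identity to arbitrary simple isolated lines, especially when the double points of $C$ are Galois-conjugate. I would base-change to a splitting field $\tilde k/k$ of the \'etale algebra $\M$, run the generic-case argument over $\tilde k$, and descend using the fact that both $\det A$ and $\prod_M N_{L_M/k}(\Res(Q_1^M,Q_2^M))$ are Galois-invariant elements of $\tilde k^\times$ that lie in $k^\times$. The delicate point is that equality modulo $\tilde k^\times$-squares does not in general imply equality modulo $(k^\times)^2$: the normalizing transformation $M\in\mathrm{GL}_3(\tilde k)$ introduces an error factor $(\det M)^2\in k^\times$ which need not be a $k^\times$-square. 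To close this gap one must either choose $M$ in a Galois-equivariant way that forces $\det M\in k^\times$, or reformulate the generic identity in an intrinsically $k$-rational manner (for instance as a Bezoutian-style formula expressing $\det A$ as a norm from $\M$ of a universal \'etale class). Special configurations of the double points (collinearity, coincidence) are then handled by specialization from the generic open, and the case where $\M$ is positive-dimensional is treated separately in subsection \ref{subsubsection: infinitely many double points}.
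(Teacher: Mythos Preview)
Your generic $k$-rational case and your proof of the identity $\det A_{Q_2Q_3,Q_1Q_3,Q_1Q_2}=\prod\Res(Q_i,Q_j)$ via irreducibility of the binary-quadratic resultants and a degree count essentially match the paper, and give a clean variant of Proposition~\ref{prop:res=det}.

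The real gap is the descent for non-$k$-rational double points. You correctly diagnose that after base-change to a splitting field $\tilde k$ the normalizing transformation contributes a factor that is a $\tilde k$-square but not obviously a $k$-square, and you propose choosing the transformation Galois-equivariantly---but you do not carry this out, and this is exactly where the substance of the paper's proof lies. The paper never base-changes. For each of the three Galois-orbit structures on the double points (all rational; one rational plus a conjugate pair over $k(\sqrt\beta)$; a single orbit over a separable cubic $k(\beta)$) it writes down an explicit target triple $(M_1,M_2,M_3)\subset\PP^2$ carrying the \emph{same} Galois structure, so that the projective transformation sending the double points there lies in $\mathrm{GL}_3(k)$. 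It then reduces the resulting $k$-rational normal form for $(P_1,P_2,P_3)$ to the $Q_jQ_k$ shape by further column operations (now over the relevant extension), and tracks every correction factor explicitly, verifying in each case that these factors assemble into a square in $k^\times$ and that the remaining resultants combine into the norm defining the type. Without this bookkeeping your argument is a plan, not a proof. A secondary gap: the degenerate configurations cannot be ``handled by specialization from the generic open.'' In the tacnode case the normal form is $P_1=Q_2^2$, $P_2=Q_1S$, $P_3=Q_1Q_2$ with $S$ not of the shape $Q_jQ_k$, so your identity does not specialize there; the paper instead verifies $\det A=\Res(Q_1,Q_2)^2\Res(S,Q_2)$ by a separate direct computation.
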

\begin{corollary}
For $X$ a general quintic threefold we have

\begin{equation}
e^{\A^1}(\mathcal{E})=\sum_{l\subset X}\ind_l\sigma_f=\sum_{l\subset X}\Tr_{k(l)/k}\ind_{l_{k(l)}}(\sigma_f)_{k(l)}=\sum_{l\subset X}\Tr_{k(l)/k}\operatorname{Type}(l)\in\GW(k)
\end{equation}

when $k$ is a field with $\operatorname{char}k\neq 2$.
\end{corollary}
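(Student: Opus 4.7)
The corollary is essentially a direct assembly of Theorem \ref{thm: main thm} with two already-established pieces of machinery, so the plan is to verify that each of the three equalities can be justified in one short step and that the hypotheses align correctly.

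First I would justify the leftmost equality $e^{\A^1}(\mathcal{E})=\sum_{l\subset X}\ind_l\sigma_f$. Since $X$ is general, Lemma \ref{lemma: separable field ext} guarantees that every line on $X$ is a simple, isolated zero of $\sigma_f$ and is defined over a separable field extension of $k$. Thus $\sigma_f$ is a section of $\mathcal{E}$ with only isolated zeros, and the bundle is relatively oriented by Proposition \ref{prop: relative orienatation}. The equality is then just the definition of the $\A^1$-Euler number recalled at the beginning of Section \ref{section: local index}.

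Next I would handle the middle equality $\sum_{l\subset X}\ind_l\sigma_f=\sum_{l\subset X}\Tr_{k(l)/k}\ind_{l_{k(l)}}(\sigma_f)_{k(l)}$ term by term. For each line $l$ with residue field $k(l)/k$ separable, the trace-of-local-index formula stated in Subsection \ref{subsection: local coordinates} (citing \cite[Proposition 34]{MR4247570}) gives $\ind_l\sigma_f=\Tr_{k(l)/k}\ind_{l_{k(l)}}(\sigma_f)_{k(l)}$. Summing over all $l\subset X$ yields the desired identity.

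Finally, for the rightmost equality $\sum_{l\subset X}\Tr_{k(l)/k}\ind_{l_{k(l)}}(\sigma_f)_{k(l)}=\sum_{l\subset X}\Tr_{k(l)/k}\operatorname{Type}(l)$, I would apply Theorem \ref{thm: main thm} termwise. After base change to $L:=k(l)$, the line $l_L$ is an $L$-rational, simple, isolated line on the quintic threefold $X_L=\{f=0\}\subset\PP^4_L$, and $\operatorname{char} L=\operatorname{char} k\neq 2$, so Theorem \ref{thm: main thm} applies and gives $\ind_{l_L}(\sigma_f)_L=\operatorname{Type}(l_L)\in\GW(L)$. By the definition of the type in Section \ref{section: defn of type}, $\operatorname{Type}(l_L)$ is precisely $\operatorname{Type}(l)$ (the definition is given for $k$-rational lines after base change, so $\operatorname{Type}(l):=\operatorname{Type}(l_{k(l)})$). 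Applying $\Tr_{k(l)/k}$ and summing over $l$ closes the chain.

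There is really no obstacle here beyond bookkeeping; the only subtlety worth flagging is that the definition of $\operatorname{Type}(l)$ in Definition \ref{defn: type} is formulated under the running assumption $k(l)=k$, so one should state explicitly that for a line $l$ whose residue field is a proper extension $L/k$ one reads $\operatorname{Type}(l)$ as the element of $\GW(L)$ obtained from $l_L$, and then takes $\Tr_{L/k}$. Once that convention is recorded, the corollary is simply the concatenation of the definition of $e^{\A^1}$, the trace compatibility of local indices, and Theorem \ref{thm: main thm}.
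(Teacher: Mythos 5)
Your proposal is correct and is exactly the argument the paper intends (the paper leaves the corollary unproved, as it is the immediate concatenation of Lemma \ref{lemma: separable field ext}, the definition of $e^{\A^1}(\mathcal{E})$ as a sum of local indices, the trace formula $\ind_l\sigma_f=\Tr_{k(l)/k}\ind_{l_{k(l)}}(\sigma_f)_{k(l)}$ recalled in Subsection \ref{subsection: local coordinates}, and Theorem \ref{thm: main thm} applied after base change to $k(l)$). Your remark about reading $\operatorname{Type}(l)$ as $\operatorname{Type}(l_{k(l)})\in\GW(k(l))$ is also the convention the paper uses implicitly, so no gap remains.
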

This means that when we count lines on a general quintic threefold weighted by the product of the degrees of the fixed points of the Segre involutions corresponding to the three double points of the Gauss map of the line,  we get an invariant element of $\GW(k)$.
\subsection{An algebraic interpretation of the product of resultants}
We have seen that the type of a line is equal to the product of resultants of quadratic polynomials in Lemma \ref{lemma: degree i=Res}. We give an algebraic interpretation of the product of resultants for a special choice of the quadratic polynomials and will prove Theorem \ref{thm: main thm} by reducing to this case.

\begin{proposition}
\label{prop:res=det}
Let $Q_1$, $Q_2$ and $Q_3$ be homogeneous degree 2 polynomials in $u$ and $v$ and let $P_1=Q_2Q_3$, $P_2=Q_1Q_3$ and $P_3=Q_1Q_2$.
Then
\begin{equation} 
\label{eq: det = prod res}
\det A_{P_1,P_2,P_3}=\Res(Q_1,Q_2)\Res(Q_2,Q_3)\Res(Q_1,Q_3)
\end{equation}
where $A_{P_1,P_2,P_3}$ is the matrix defined in Proposition \ref{prop: local index algebraically}.
\end{proposition}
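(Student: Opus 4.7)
The plan is to treat both sides of the identity as polynomials in the nine indeterminate coefficients of $Q_1, Q_2, Q_3$ and prove equality via a divisibility-plus-degree-count argument. First I would identify $\det A_{P_1,P_2,P_3}$ with the determinant of the $k$-linear map $\tau:k[u,v]_1^{\oplus 3}\to k[u,v]_5$ sending $(r_1,r_2,r_3)\mapsto r_1 P_1+r_2 P_2+r_3 P_3$, since the six columns of $A$ are, in order, the coordinate vectors of $u P_1,\,vP_1,\,uP_2,\,vP_2,\,uP_3,\,vP_3$ in the basis $u^5,u^4v,\dots,v^5$. Next I would record the multi-degrees. Because each coefficient of $P_i=Q_jQ_k$ is bilinear in the coefficients of $Q_j$ and $Q_k$, a Leibniz expansion of the $6\times 6$ determinant shows that $\det A_{P_1,P_2,P_3}$ is homogeneous of multi-degree $(4,4,4)$ in the coefficients of $(Q_1,Q_2,Q_3)$: each $Q_i$ appears in exactly four of the six columns. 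The Sylvester matrix of two degree-$2$ forms is $(2,2)$-homogeneous in their coefficients, so $\Res(Q_1,Q_2)\Res(Q_2,Q_3)\Res(Q_1,Q_3)$ also has multi-degree $(2{+}0{+}2,\,2{+}2{+}0,\,0{+}2{+}2)=(4,4,4)$.

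The heart of the argument is a divisibility claim. If $Q_1$ and $Q_2$ share a common linear factor $\ell$, write $Q_i=\ell Q_i'$ for $i=1,2$ and take $r_1=Q_1'$, $r_2=-Q_2'$, $r_3=0$; then
\[
r_1P_1+r_2P_2+r_3P_3 \;=\; Q_3\bigl(Q_1'Q_2-Q_2'Q_1\bigr) \;=\; Q_3\bigl(Q_1'\ell Q_2'-Q_2'\ell Q_1'\bigr) \;=\; 0,
\]
so by Lemma \ref{lemma: detA=0} we have $\det A_{P_1,P_2,P_3}=0$ whenever $\Res(Q_1,Q_2)=0$, and symmetrically for the other two pairs. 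Since the Sylvester resultant of two generic degree-$2$ forms is a classical irreducible polynomial in the coefficients, and since the three resultants involve three distinct pairs of coefficient sets and are therefore pairwise coprime, their product divides $\det A_{P_1,P_2,P_3}$ in $\mathbb{Z}[\text{coeffs of }Q_1,Q_2,Q_3]$.

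Comparing multi-degrees forces $\det A_{P_1,P_2,P_3}=c\cdot\Res(Q_1,Q_2)\Res(Q_2,Q_3)\Res(Q_1,Q_3)$ for an absolute integer constant $c$, and it remains to pin down $c=1$ by a single specialization. Choosing $Q_1=u^2$, $Q_2=v^2$, $Q_3=u^2+v^2$ makes each of the three Sylvester matrices upper-triangular with unit diagonal, so each resultant equals $1$; a direct Laplace expansion of the resulting sparse $6\times 6$ matrix (most of whose rows contain a single nonzero entry) also yields $\det A=1$, giving $c=1$. The main obstacle I anticipate is a clean justification of the divisibility step: one wants to invoke irreducibility of the Sylvester resultant and to be careful that $\det A_{P_1,P_2,P_3}$ is not identically zero as a polynomial in the coefficients, both of which are handled by the explicit specialization above together with unique factorization in the ambient polynomial ring.
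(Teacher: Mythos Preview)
Your argument is correct and follows essentially the same strategy as the paper: establish that $\det A_{P_1,P_2,P_3}$ vanishes whenever one of the pairwise resultants does, deduce that one side is a scalar multiple of the other, and pin down the scalar by the very same specialization $Q_1=u^2$, $Q_2=v^2$, $Q_3=u^2+v^2$ (up to relabeling). The only substantive difference is in how the ``scalar multiple'' step is justified. The paper also proves the converse implication (if $\det A=0$ then some $\Res(Q_i,Q_j)=0$) and then asserts that two homogeneous polynomials with the same zero locus must be proportional; this inference is a bit loose without a degree comparison. Your route---invoking irreducibility of each Sylvester resultant, pairwise coprimality, and the explicit multi-degree count $(4,4,4)$ on both sides---makes the divisibility and degree match rigorous and renders the converse implication unnecessary. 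So your proof is a slightly tightened version of the paper's own.
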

\begin{proof}
The Proposition can be shown by computing both sides of \eqref{eq: det = prod res}. However, we want to give another proof that illustrates what is going on.

We first show that $\det A_{P_1,P_2,P_3}=0$ if and only if $\Res(Q_1,Q_2)\Res(Q_2,Q_3)\Res(Q_1,Q_3)=0$. As both sides in \eqref{eq: det = prod res} are homogeneous polynomials in the coefficients of the $Q_i$, it follows that one is a scalar multiple of the other. We show equality for nice choice of $Q_1$, $Q_2$ and $Q_3$ which implies equality in \eqref{eq: det = prod res}. 

Assume that $\Res(Q_1,Q_2)=0$. Then $Q_1$ and $Q_2$ have a common degree 1 factor and there are $r_1,r_2\in k[u,v]_1$ such that $r_1Q_1+r_2Q_2=0$. Thus $r_2P_1+r_1P_2+0\cdot P_3=Q_3(r_1Q_1+r_2Q_2)=0$. So $\det A_{P_1,P_2,P_3}=0$ by Lemma \ref{lemma: detA=0}.

Conversely, assume that $\det A_{P_1,P_2,P_3}=0$ and there are $r_1,r_2,r_3\in k[u,v]_1$ not all zero, such that $r_1P_1+r_2P_2+r_3P_3=0$. 
Assume $r_1\neq 0$. Then
\begin{align*}
&0=r_1P_1+r_2P_2+r_3P_3=r_1Q_2Q_3+r_2Q_1Q_3+r_3Q_1Q_2\\
\Leftrightarrow &-r_1Q_2Q_3=Q_1(r_2Q_3+r_3Q_2).
\end{align*}
Since $\deg r_1=1$ and $\deg Q_1=2$, either $Q_2$ and $Q_1$ share a degree 1 factor or $Q_3$ and $Q_1$ do (or both) and thus $\Res(Q_1,Q_2)=0$ or $\Res(Q_1,Q_3)=0$.

To show equality of \eqref{eq: det = prod res} it remains to show equality for one choice of $Q_1$, $Q_2$ and $Q_3$ such that $\det A_{P_1,P_2,P_3}$ and $\Res(Q_1,Q_2)$, $\Res(Q_1,Q_3)$, $\Res(Q_2,Q_3)$ are all nonzero.
Let $Q_1=u^2$, $Q_2=u^2+v^2$ and $Q_3=v^2$. Then $\Res(Q_1,Q_2)=\Res(Q_1,Q_3)=\Res(Q_2,Q_3)=1$ and $\det A_{P_1,P_2,P_3}=1$.
\end{proof}

\subsection{Proof of Theorem \ref{thm: main thm}}
Let $l$ be a simple and isolated line with field of definition $k$ on a quintic threefold and let $C=(P_1:P_2:P_3):l\rightarrow\PP^2$ be the associated degree 4 curve defined in section \ref{section: defn of type}. 
There are the following possibilities for how $C:l\rightarrow \PP^2$ looks like.
\begin{enumerate}
\item $C$ is birational onto its image. Then there are two possibilities
\begin{enumerate}
\item the generic case: the image has three distinct double points in general position.
\item the curve has a tacnode, that is a double point with multiplicity two and another double point.
\end{enumerate}
\item $C$ is not birational onto its image. 
\end{enumerate}
A degree $4$ map $\PP^1\rightarrow \PP^4$ which is not birational onto its image, is either a degree $4$ cover of a line or a degree $2$ cover of a conic. It was pointed out to me by the anonymous referee that if $C$ were a degree $4$ cover of a line, there would be linear $r_1,r_2,r_3\in k[u,v]$ such that $r_1P_1+r_2P_2+r_3P_3=0$. In this case $\det A_{P_1,P_2,P_3}=0$ as shown in Proposition \ref{prop:res=det}, and the line line would not be simple and isolated.
So in case $C$ is not birational onto its images, it is a degree $2$ cover of a conic.

We prove Theorem \ref{thm: main thm} case by case.
\subsubsection{The generic case}
In case $C$ has three double points in general position, we perform a coordinate change of $\mathbb{P}^4$ with the aim to being able to apply Proposition \ref{prop:res=det}. We do this for all possible field extensions over which the three double points in $\mathcal{M}$ can be defined. There are three possibilities.
\begin{enumerate}
    \item All three points in $\mathcal{M}$ are $k$-rational.
    \item One point in $\M$ is rational and the other two are defined over a quadratic field extension $L=k(\sqrt{\beta})$. Since $\operatorname{char}k\neq 2$, the field extension $L/k$ is Galois. Let $\sigma$ be the nontrivial element of $\operatorname{Gal}(L/k)$. 
    \item  Let $L=k(\beta)$ be a degree 3 field extension of $k$ and assume that $L$ the is residue field of one of the double points $M$.
Because we assumed that the three double points are in general position and thus pairwise different, we know that $L/k$ is separable. 
Let $E/L$ be the smallest field extension such that $E/k$ is Galois. Let $G$ be the Galois group of $E$ over $k$ and let $\sigma,\tau\in G$ such that the other two double points of $C$ are $\sigma(M)$ and $\tau(M)$.
\end{enumerate}

We choose three special points $M_1$, $M_2$ and $M_3$ in $\PP^2$ for each of the three cases as in the following table.

\begin{center}
\begin{tabular}{ |c|c|c|c| }
 \hline 
 & 1. & 2. & 3.\\
 \hline
$M_1:=$ & $(1:0:0)$ & $(0:1:0)$ & $(1:\beta:\beta^2)$ \\ 
$M_2:=$ & $(0:1:0)$ & $(0:1:\sqrt{\beta})$ & $(1:\sigma(\beta):\sigma(\beta)^2)$ \\ 
$M_3:=$ & $(0:0:1)$ & $(0:1:-\sqrt{\beta})$ & $(1:\tau(\beta):\tau(\beta)^2)$ \\ 
 \hline
\end{tabular}
\end{center}

\begin{claim}
For each of the three casese, there is $\phi\in \operatorname{Gl}_3(k)$ that maps the three double points to $M_1$, $M_2$ and $M_3$.
\end{claim}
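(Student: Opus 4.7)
The plan is to construct $\phi\in \operatorname{Gl}_3(k)$ explicitly in each of the three cases, exploiting that the set $\mathcal{M}$ of three double points is Galois-stable (since it is determined by the $k$-defined Gauss map $C$). In each case one builds a $k$-rational frame of $\PP^2$ whose Galois behaviour matches that of the target triple $\{M_1,M_2,M_3\}$, and the change-of-basis matrix is the desired $\phi$.

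Case 1 is essentially classical: three non-collinear $k$-rational points lift to a $k$-rational basis of $k^3$, and the inverse of this basis (suitably normalised) furnishes $\phi \in \operatorname{Gl}_3(k)$ sending the double points to the three coordinate points.

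For Case 2, I would let $P$ be the $k$-rational double point and $\{Q,\sigma Q\}$ the conjugate pair over $L = k(\sqrt{\beta})$. Writing a lift of $Q$ in $L^3$ as $v + \sqrt{\beta}\,w$ with $v,w\in k^3$ forces $\sigma Q$ to lift to $v - \sqrt{\beta}\,w$; thus $v$, $w$, and a $k$-rational lift of $P$ provide a $k$-rational basis (non-degenerate because $l$ is simple and isolated), and the matrix $\phi$ sending this basis to the standard lifts of $M_1,M_2,M_3$ lies in $\operatorname{Gl}_3(k)$ and maps the double points to the targets. Case 3 is analogous but uses the Galois orbit $\{M,\sigma M,\tau M\}$ over the degree 3 extension $L = k(\beta)$: an $L$-rational lift of $M$ together with its Galois conjugates spans $L^3$, and the $k$-basis $\{1,\beta,\beta^2\}$ of $L/k$ is used to refine to a $k$-rational basis realising the Vandermonde-style targets in the table.

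The main obstacle is verifying $k$-rationality of $\phi$ in Cases 2 and 3. This is a Galois descent argument: although the individual conjugate points are defined only over the extension, the configuration $\mathcal{M}$ is Galois-stable, so the change-of-basis matrix written down over the extension is Galois-invariant and descends to $k$. One also needs to check that the chosen vectors really span $k^3$ (so that $\phi$ is invertible) and that all three target points are hit simultaneously; both follow from the hypothesis that the double points are distinct and in general position, combined with transitivity of the $k$-rational residual action on a fourth frame point.
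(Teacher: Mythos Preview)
Your approach is essentially the paper's: in each case one decomposes a lift of the double points over a $k$-basis of the relevant field extension ($\{1\}$ in Case~1, $\{1,\sqrt\beta\}$ in Case~2, $\{1,\beta,\beta^2\}$ in Case~3), takes the resulting $k$-rational coefficient vectors as the columns of a $3\times 3$ matrix, and lets $\phi$ be its inverse. The paper writes this out in explicit coordinates rather than phrasing it as Galois descent, so the $k$-rationality of $\phi$ is immediate from the construction and no separate descent step or fourth frame point is needed; invertibility follows directly from the \emph{general position} hypothesis on the three double points (which is the defining assumption of the generic case), not from $l$ being simple and isolated.
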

\begin{proof}
\begin{enumerate}
	\item Let $(m_1:m_2:m_3)$, $(n_1:n_2:n_3)$ and $(q_1:q_2:q_3)$ be the three double points. In the first case, they are all $k$-rational and in general position. So the matrix 
$\begin{pmatrix}m_1 &n_1&q_1\\m_2 &n_2&q_2\\m_3 &n_3&q_3 \end{pmatrix}$ has coefficients in $k$ and is invertible. Its inverse is the map we are looking for. 
	\item Let $(m_1:m_2:m_3)$, $(n_1+\sqrt{\beta}q_1:n_2+\sqrt{\beta}q_2:n_3+\sqrt{\beta}q_3)$ and $(n_1-\sqrt{\beta}q_1:n_2-\sqrt{\beta}q_2:n_3-\sqrt{\beta}q_3)$ be the three double points with
$m_i,n_i,q_i\in k$ for $i=1,2,3$. Again the matrix $\begin{pmatrix}m_1 &n_1&q_1\\m_2 &n_2&q_2\\m_3 &n_3&q_3 \end{pmatrix}$ is invertible and its inverse sends the three double points to $M_1$, $M_2$ and $M_3$.
	\item Let $(m_1+\beta n_1+\beta^2 q_1 : m_2+\beta n_2+\beta^2 q_2: m_3+\beta n_3+\beta^2 q_3)$, $(m_1+\sigma(\beta) n_1+\sigma(\beta)^2 q_1:m_2+\sigma(\beta) n_2+\sigma(\beta)^2 q_2: m_3+\sigma(\beta) n_3+\sigma(\beta)^2 q_3)$ and $(m_1+\tau(\beta) n_1+\tau(\beta)^2 q_1:m_2+\tau(\beta) n_2+\tau(\beta)^2 q_2: m_3+\tau(\beta) n_3+\tau(\beta)^2 q_3)$ be the three double points. Again the map we are looking for is the inverse of $\begin{pmatrix}m_1 &n_1&q_1\\m_2 &n_2&q_2\\m_3 &n_3&q_3 \end{pmatrix}$.
\end{enumerate}
\end{proof}

Let $\phi=\begin{pmatrix}
a_1 & b_1 & c_1 \\
a_2 & b_2 & c_2 \\
a_3 & b_3 & c_3 
\end{pmatrix}$ 
be the endomorphism of $\PP^2$ that maps the three double points to $M_1$, $M_2$ and $M_3$.
We replace the coordinates $x_1,x_2,x_3,u,v$ of $\PP^4$ by $a_1x_1+a_2x_2+a_3x_3,b_1x_1+b_2x_2+b_3x_3,c_1x_1+c_2x_2+c_3x_3,u,v$. Then 
\begin{align*}
f&=(a_1x_1+a_2x_2+a_3x_3)P_1(u,v)+(b_1x_1+b_2x_2+b_3x_3)P_2(u,v)+(c_1x_1+c_2x_2+c_3x_3)P_3+Q'\\
&=x_1(a_1P_1+b_1P_2+c_1P_3)+x_2(a_2P_1+b_2P_2+c_2P_3)+x_3(a_3P_1+b_3P_2+c_3P_3)+Q'\\
&=:x_1P_1'+x_2P_2'+x_3P_3'+Q'
\end{align*}
for some $Q'\in (x_1,x_2,x_3)^2$ and $P_1':=a_1P_1+b_1P_2+c_1P_3$, $P_2':=a_2P_1+b_2P_2+c_2P_3$ and $P_3':=a_3P_1+b_3P_2+c_3P_3$.
Let $C':=(P_1':P_2':P_3'):l\rightarrow \PP^2$. Then the three double points of $C'$ are $M_1$, $M_2$ and $M_3$. 
\begin{claim}After a coordinate change we can assume that $P_1$, $P_2$ and $P_3$ equal

\begin{center}
\begin{tabular}{ |c|c|c|c| }
 \hline 
 & 1. & 2. & 3.\\
 \hline
$P_1=$ & $Q_2Q_3$ & $Q\sigma(Q)$  & $Q\sigma(Q)+\sigma(Q)\tau(Q)+Q\tau(Q)$ \\ 
$P_2=$ & $Q_1Q_3$ & $Q_1\frac{Q-\sigma(Q)}{2\sqrt{\beta}}$ & $\tau(\beta)Q\sigma(Q)+\beta\sigma(Q)\tau(Q)+\sigma(\beta)Q\tau(Q)$ \\ 
$P_3=$ & $Q_1Q_2$ & $Q_1\frac{Q+\sigma(Q)}{2}$ & $\tau(\beta)^2Q\sigma(Q)+\beta^2\sigma(Q)\tau(Q)+\sigma(\beta)^2Q\tau(Q)$ \\ 
 \hline
\end{tabular}
\end{center}
for some homogeneous degree $2$ polynomials $Q,Q_1,Q_2,Q_3\in k[u,v]_2$.
\end{claim}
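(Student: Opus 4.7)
The plan is to treat the three cases separately by reading off divisibility constraints on $P_1, P_2, P_3$ from the prescribed positions of $M_1, M_2, M_3$, and then using the rescaling freedom in the $Q_i$ and in the coordinates $x_1, x_2, x_3$ to normalize the residual scalars. The shared mechanism is that if $M$ is a double point of $C$ with two preimages on $l$ cut out by a degree-$2$ polynomial $Q_M$ (over the residue field of $M$), then the two $k(M)$-linear combinations of $P_1, P_2, P_3$ whose simultaneous vanishing cuts out $\{C = M\}$ are each divisible by $Q_M$. In case~1, $M_1 = (1{:}0{:}0)$ gives $Q_1 \mid P_2$ and $Q_1 \mid P_3$, and similarly for $Q_2, Q_3$; pairwise coprimality of the $Q_i$ and the degree count $\deg P_i = 4 = \deg Q_j + \deg Q_k$ force $P_1 = \lambda_1 Q_2 Q_3$, $P_2 = \lambda_2 Q_1 Q_3$, $P_3 = \lambda_3 Q_1 Q_2$ for some $\lambda_i \in k^\times$, which are absorbed by rescaling the $Q_i$ and, if needed, the coordinates $x_i$ (a diagonal rescaling of $x_1, x_2, x_3$ preserves the positions of $M_1, M_2, M_3$).

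For case~2, let $Q \in L[u,v]_2$ with $L = k(\sqrt{\beta})$ cut out the preimage of one of the two conjugate double points, so that $\sigma Q$ cuts out the preimage of the other. The constraints then read $Q_1 \mid P_2$, $Q_1 \mid P_3$, $Q\sigma Q \mid P_1$, and $Q \mid (\sqrt{\beta} P_2 - P_3)$ together with its Galois conjugate. Degree counting plus Galois invariance gives $P_1 = \lambda\, Q\sigma Q$ for some $\lambda \in k^\times$; writing $P_2 = Q_1 A$ and $P_3 = Q_1 B$ with $A, B \in k[u,v]_2$, the remaining conditions take the form $B - \sqrt{\beta} A = \mu Q$ for some $\mu \in L^\times$, together with its Galois conjugate $B + \sqrt{\beta} A = \sigma(\mu)\, \sigma Q$. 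Solving the $2{\times}2$ linear system for $(A, B)$ and then rescaling $Q \mapsto \nu Q$ with $\nu \in L^\times$ chosen so that $\mu$ is normalized to $-1$ (so that $\sigma\nu$ compatibly normalizes $\sigma\mu$), followed by absorbing leftover scalars into $x_1, x_2, x_3$, yields the tabulated forms. Case~3 proceeds by the same template, with $Q, \sigma Q, \tau Q$ cutting out the three Galois conjugate preimage divisors on $l$; the conditions $Q \mid (P_2 - \beta P_1)$ and $Q \mid (P_3 - \beta^2 P_1)$ together with their Galois conjugates determine $P_1, P_2, P_3$ as Lagrange-type combinations of $Q\sigma Q$, $\sigma Q\,\tau Q$, $Q\,\tau Q$ with coefficients built from $\beta, \sigma\beta, \tau\beta$, matching the tabulated expressions after a Galois-equivariant rescaling of $Q$.

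The main obstacle is that the residual scalars such as $\mu$ in case~2 live a priori in the extension field where the $Q$'s are defined rather than in $k$. One must verify that the rescalings used to absorb them are Galois-equivariant so that the rescaled $P_i$ remain in $k[u,v]_4$; for case~2 this is an easy check since the Galois group has order $2$, but for case~3 it rests on the Vandermonde non-degeneracy of the $3 \times 3$ matrix with rows $(1, \beta_i, \beta_i^2)$ at the three Galois conjugates $\beta_i$, which holds precisely because the degree-$3$ extension $L/k$ is separable. Together with the fact that the pairwise coprimality of $Q, \sigma Q, \tau Q$ (resp. $Q, \sigma Q, Q_1$ in case~2) follows from the double points being pairwise distinct, this is enough to carry the divisibility arguments through.
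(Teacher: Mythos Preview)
Your proposal is correct and follows essentially the same approach as the paper: both extract the form of the $P_i$ from divisibility constraints imposed by the standardized positions of the double points, using pairwise coprimality of the $Q_i$ (from simplicity of the line) and degree counting. The only cosmetic difference is that the paper absorbs the residual scalars in cases~2 and~3 by \emph{defining} $Q$ directly as a specific factor of an explicit $k$-linear combination of the $P_i'$ (so the normalization is built in), rather than starting from an arbitrary $Q$ and rescaling Galois-equivariantly as you do; your observation that $Q$ lives in $L[u,v]_2$ rather than $k[u,v]_2$ in those cases is in fact a small correction to the claim as stated.
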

\begin{proof}
Let $s_i,r_i\in l\cong \PP^1$ be the two points that are sent to $M_i$ by $C'$ and let $Q_i\in k[u,v]_2$ be a homogeneous degree $2$ polynomial with zeros $s_i$ and $r_i$ for $i=1,2,3$.
\begin{enumerate}
	\item Since $C'(s_2)=C'(r_2)=(0:1:0)$ and $C'(s_3)=C'(r_3)=(0:0:1)$, we get that $P'_1(s_j)=P'_1(r_j)=0$ for $j=2,3$. Hence, $Q_2$ and $Q_3$ both divide $P'_1$. We have seen in the proof of Lemma \ref{lemma: base point free} that if $Q_2$ and $Q_3$ had a common factor, then the line $l$ would not be a simple line. So up to scalars in $k^{\times}$, we have that $P'_1=Q_2Q_3$.  Note we can always scale the $P_i'$ because we can replace $x_i$ by $\lambda x_i$ for $\lambda \in k^{\times}$ and $i\in \{1,2,3\}$. Similarly, $P'_2=Q_1Q_3$ and $P'_3=Q_1Q_2$.
	\item Again we know that both $Q_2$ and $Q_3$ divide $P'_1$. Since $P'_1$ has coefficients in $k$, we know that $Q_3=\sigma(Q_2)=:Q$ and $P_1'=Q\sigma(Q)$ up to a scalar in $k^{\times}$. 
Since $\sqrt{\beta}P'_2(s_2)=P'_3(s_2)$ as well as $\sqrt{\beta}P'_2(r_2)=P'_3(r_2)$ we get that $\sigma(Q)$ divides $P_3'-\sqrt{\beta}P_2'$ and since $-\sqrt{\beta}P'_2(s_3)=P'_3(s_3)$ and $-\sqrt{\beta}P'_2(r_3)=P'_3(r_3)$, we know that $Q$ divides $P_3'+\sqrt{\beta}P_2'$. 
Furthermore, $Q_1$ divides $P'_2$ and $P'_3$. Since $P'_2$ and $P'_3$ have coefficients in $k$, it follows that $\sigma(Q)=\frac{P_3'-\sqrt{\beta}P_2'}{Q_1}$ and $Q=\frac{P_3'+\sqrt{\beta}P_2'}{Q_1}$ up to a scalar in $k^{\times}$ and equivalently 
$P_2'=Q_1\frac{Q-\sigma(Q)}{2\sqrt{\beta}}$ and $P_3'=Q_1\frac{Q+\sigma(Q)}{2}$.
	\item We can assume that $Q_1$, $Q_2$ and $Q_3$ are Galois conjugates, so let $Q:=Q_1$, then $Q_2=\sigma(Q)$ and $Q_3=\tau(Q)$. The two zeros $r_1$ and $s_1$ of $Q$ are zeros of \[\beta\sigma(\beta)(\beta-\sigma(\beta))P'_1+(\sigma(\beta)^2-\beta^2)P_2'+(\beta-\sigma(\beta))P_3'\] since $\beta\sigma(\beta)(\beta-\sigma(\beta))P'_1(r_1)+(\sigma(\beta)^2-\beta^2)P_2'(r_1)+(\beta-\sigma(\beta))P_3'(r_1)$ and $\beta\sigma(\beta)(\beta-\sigma(\beta))P'_1(s_1)+(\sigma(\beta)^2-\beta^2)P_2'(s_1)+(\beta-\sigma(\beta))P_3'(s_1)$ equal 
\[
	\beta\sigma(\beta)(\beta-\sigma(\beta)))\cdot 1+(\sigma(\beta)^2-\beta^2)\cdot \beta+(\beta-\sigma(\beta))\cdot \beta^2=0.
\]
Similarly, one sees that the two zeros $r_2$ and $s_2$ of $\sigma(Q)$ are zeros of $\beta\sigma(\beta)(\beta-\sigma(\beta))P'_1+(\sigma(\beta)^2-\beta^2)P_2'+(\beta-\sigma(\beta))P_3'$. Thus we can assume that 
\[Q\sigma(Q)=\frac{\beta\sigma(\beta)(\beta-\sigma(\beta))P'_1+(\sigma(\beta)^2-\beta^2)P_2'+(\beta-\sigma(\beta))P_3'}{(\beta-\sigma(\beta))(\sigma(\beta)-\tau(\beta))(\tau(\beta)-\beta)}.\]
And similarly, we get
\[\sigma(Q)\tau(Q)=\frac{\sigma(\beta)\tau(\beta)(\sigma(\beta)-\tau(\beta))P'_1+(\tau(\beta)^2-\sigma(\beta)^2)P_2'+(\sigma(\beta)-\tau(\beta))P_3'}{(\beta-\sigma(\beta))(\sigma(\beta)-\tau(\beta))(\tau(\beta)-\beta)}\]
and
\[\tau(Q)Q=\frac{\tau(\beta)\beta(\tau(\beta)-\beta)P'_1+(\beta^2-\tau(\beta)^2)P_2'+(\tau(\beta)-\beta)P_3'}{(\beta-\sigma(\beta))(\sigma(\beta)-\tau(\beta))(\tau(\beta)-\beta)}.\]
Equivalently,
\[P_1'=Q\sigma(Q)+\sigma(Q)\tau(Q)+Q\tau(Q),\]\[P_2'=\tau(\beta)Q\sigma(Q)+\beta\sigma(Q)\tau(Q)+\sigma(\beta)Q\tau(Q)\]and\[P_3'=\tau(\beta)^2Q\sigma(Q)+\beta^2\sigma(Q)\tau(Q)+\sigma(\beta)^2Q\tau(Q).\]

\end{enumerate}
\end{proof}

Now we can use Proposition \ref{prop:res=det} to prove Theorem \ref{thm: main thm} in the generic case.
\begin{enumerate}
	\item In case all double points are $k$-rational, we have
\[\operatorname{ind}_l\sigma_f=\langle\det A_{P_1,P_2,P_3}\rangle\overset{\text{\ref{prop:res=det}}}=\langle \Res(Q_1,Q_2)\Res(Q_1,Q_3)\Res(Q_2,Q_3)\rangle=\operatorname{Type}(l).\]
	\item In case two of the double points are defined over the quadratic field extension $L=k(\sqrt{\beta})$, we get
\begin{align*}
\det A_{P_1,P_2,P_3}&=\det A_{P_1,-\sqrt{\beta}P_2+P_3,\sqrt{\beta}P_2+P_3}\cdot\frac{1}{4\beta}\\
&\overset{\ref{prop:res=det}}=\Res(Q,\sigma(Q)\Res(Q_1,Q)\Res(Q_1,\sigma(Q)) \cdot 4\beta\\
&=\Res\left(\frac{Q+\sigma(Q)}{2},\frac{Q-\sigma(Q)}{2\sqrt{\beta}}\right)\Res(Q_1,Q)\Res(Q_1,\sigma(Q))\cdot \frac{4\beta}{4 \beta}.
\end{align*}
Note that $\frac{Q+\sigma(Q)}{2}$ and $\frac{Q-\sigma(Q)}{2\sqrt{\beta}}$ both have coefficients in $k$ and thus $\Res\left(\frac{Q+\sigma(Q)}{2},\frac{Q-\sigma(Q)}{2\sqrt{\beta}}\right)$ compute the degree of $i_{M_1}$. Since $\sigma(\Res(Q_1,Q))=\Res(Q_1,\sigma(Q))$
\[\operatorname{ind}_l\sigma_f=\langle\det A_{P_1,P_2,P_3}\rangle=\langle\Res\left(\frac{Q+\sigma(Q)}{2},\frac{Q-\sigma(Q)}{2\sqrt{\beta}}\right)N_{L/k}(\Res(Q_1,Q))\rangle=\operatorname{Type}(l).
\]
	\item If there is no $k$-rational double point. For the degree $3$ field extension $L$ of $k$ over which the first double point is defined, we have
\begin{align*}
\det A_{P_1,P_2,P_3}=&\det A_{\sigma(Q)\tau(Q),Q\sigma(Q),Q\tau(Q)} (\beta-\sigma(\beta))^2(\beta-\tau(\beta))^2(\sigma(\beta)-\tau(\beta))^2\\
\overset{\ref{prop:res=det}}= &\Res(Q,\sigma(Q))\Res(Q,\tau(Q))\Res(\sigma(Q),\tau(Q))(\beta-\sigma(\beta))^2(\beta-\tau(\beta))^2(\sigma(\beta)-\tau(\beta))^2\\
=&\Res(Q+\sigma(Q),\sigma(\beta)Q+\beta\sigma(Q))\frac{1}{(\sigma(\beta)-\beta)^2}\\&
\Res(Q+\tau(Q),\tau(\beta)Q+\beta\tau(Q))\frac{1}{(\tau(\beta)-\beta)^2}\\
&\Res(\sigma(Q)+\tau(Q),\tau(\beta)\sigma(Q)+\sigma(\beta)\tau(Q))
\frac{1}{(\tau(\beta)-\sigma(\beta))^2}\\
&(\beta-\sigma(\beta))^2(\beta-\tau(\beta))^2(\sigma(\beta)-\tau(\beta))^2\\
=&\Res(Q+\sigma(Q),\sigma(\beta)Q+\beta\sigma(Q))\Res(Q+\tau(Q),\tau(\beta)Q+\beta\tau(Q))\\
&\Res(\sigma(Q)+\tau(Q),\tau(\beta)\sigma(Q)+\sigma(\beta)\tau(Q))\\
=&N_{L/k}(\Res(\tau(Q)+\sigma(Q),\sigma(\beta)\tau(Q)+\tau(\beta)\sigma(Q)))
\end{align*}
and 
\[\langle N_{L/k}(\Res(\tau(Q)+\sigma(Q),\sigma(\beta)\tau(Q)+\tau(\beta)\sigma(Q)))\rangle=\operatorname{Type}(l).\]
\end{enumerate}

\subsubsection{$C$ has a tacnode}
Assume that the double points of $C$ are not in general position, that is they lie on a line. Because of B\'{e}zout's theorem, the three double points cannot be distinct in this case. 
Hence, there is a tacnode, that is a double point of multiplitcity $2$. Note that, again by B\'{e}zout, there cannot be a double point of multiplicity $3$. Since one of the double points has multiplicity 2, its field of definition could be non-separable of degree 2. However, we assumed that $\operatorname{char}k\neq 2$, so this cannot be the case.
So both double points are defined over $k$ and we can assume that after a $k$-linear coordinate change the double point of multiplicity $1$ is $M_1=(1:0:0)$ and the double point of multiplicity $2$ is $M_2=(0:1:0)$. Let  $r_1,s_1\in \PP^1$ be the two points that are sent to $M_1$ by $C$ and $Q_1\in k[u,v]_2$ be a homogeneous degree $2$ polynomial that vanishes $r_1$ and $s_1$. Further,  let  $r_2,s_2\in \PP^1$ be the two points that are sent to $M_2$ by $C$ and $Q_2\in k[u,v]_2$ be a homogeneous degree $2$ polynomial that vanishes $r_2$ and $s_2$.
Then $Q_1$ divides $P_2$ and $P_3$ and $Q_2$ divides $P_1$ and $P_3$. Since $M_2$ is a double point of multiplicity $2$, we get that up to scalars in $k^{\times}$ 
\begin{align*}
P_1&=Q_2^2\\
P_2&=Q_1S\\
P_3&=Q_1Q_2
\end{align*}
for some $S\in k[u,v]_2$.

The degree of $i_{M_1}$ is equal to $\Res(S,Q_2)$ and the degree of $i_{M_2}$ is equal to $\Res(Q_1,Q_2)$.
Computing both sides shows that $\det A_{P_1,P_2,P_3}=\Res(Q_1,Q_2)^2\Res(S,Q_2)$ and thus Theorem \ref{thm: main thm} holds.

\subsubsection{$C$ is not birational onto its image}
\label{subsubsection: infinitely many double points}
Finally, we deal with the case that $C$ is not birational onto its image and $C$ is a degree $2$ cover of a conic, that is, $C:l\cong\PP^1\rightarrow \PP^2$ factors through a degree 2 map $(Q_1:Q_2):\PP^1\rightarrow\PP^1$. 

\[
\begin{tikzcd}
l\cong \PP^1\arrow{rr}{C=(P_1:P_2:P_3)}\arrow[swap]{rd}{(Q_1:Q_2)}&&\PP^ 2\\
&\PP^ 1\arrow[swap]{ru}{(R_1:R_2:R_3)}&
\end{tikzcd}
\]
 
\begin{claim} It holds that $\langle\Res(Q_1,Q_2)\rangle=(\langle\Res(Q_1,Q_2)\rangle)^3=\langle\det_{P_1,P_2,P_3}\rangle\in \GW(k)$.
\end{claim}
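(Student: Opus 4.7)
The first equality in the claim is immediate in $\GW(k)$: we have $\langle a\rangle^2=\langle a^2\rangle=\langle 1\rangle$ for any $a\in k^\times$, so $\langle\Res(Q_1,Q_2)\rangle^3=\langle\Res(Q_1,Q_2)\rangle$. The substance of the claim is the identity $\langle\det A_{P_1,P_2,P_3}\rangle=\langle\Res(Q_1,Q_2)^3\rangle$, which I would prove by computing $\det A_{P_1,P_2,P_3}$ on the nose after a reduction to a canonical form.

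For the reduction, observe that $R_1,R_2,R_3\in k[s,t]_2$ must be $k$-linearly independent: if not, the image of $(R_1:R_2:R_3)$ would lie in a line and $C$ would be a degree-$4$ cover of a line, which has already been excluded (such a case forces $\det A_{P_1,P_2,P_3}=0$ by Lemma \ref{lemma: detA=0} and contradicts the simplicity of $l$). Hence $(R_1,R_2,R_3)$ is a basis of $k[s,t]_2$, and by applying the inverse of the corresponding matrix $M\in\operatorname{Gl}_3(k)$ to the coordinates $x_1,x_2,x_3$ of $\PP^4$ we may assume $(R_1,R_2,R_3)=(s^2,st,t^2)$, so that
\[ P_1=Q_1^2,\qquad P_2=Q_1Q_2,\qquad P_3=Q_2^2.\]
Exactly as in the generic case, such a linear change multiplies $\det A_{P_1,P_2,P_3}$ by $(\det M)^2$ and therefore preserves the class in $\GW(k)$.

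With this reduction, $\det A_{P_1,P_2,P_3}$ is the determinant of the linear map
\[ \mu\colon k[u,v]_1^{\oplus 3}\to k[u,v]_5,\quad (r_1,r_2,r_3)\mapsto r_1Q_1^2+r_2Q_1Q_2+r_3Q_2^2, \]
whose matrix in the bases of Lemma \ref{lemma: trivialization of E} is $A_{P_1,P_2,P_3}$. I factor $\mu=\beta\circ\alpha$ with
\[ \alpha\colon k[u,v]_1^{\oplus 3}\to k[u,v]_3\oplus k[u,v]_1,\quad (r_1,r_2,r_3)\mapsto (r_1Q_1+r_2Q_2,\,r_3), \]
\[ \beta\colon k[u,v]_3\oplus k[u,v]_1\to k[u,v]_5,\quad (A,s)\mapsto AQ_1+sQ_2^2, \]
both endomorphisms of $6$-dimensional vector spaces. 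In the standard monomial bases, $\alpha$ is block diagonal with a $4\times 4$ Sylvester block for the pair $(Q_1,Q_2)$ and a $2\times 2$ identity block, so $\det\alpha=\Res(Q_1,Q_2)$; and the matrix of $\beta$ is precisely the Sylvester matrix of the pair $(Q_1,Q_2^2)$ of degrees $(2,4)$, so $\det\beta=\Res(Q_1,Q_2^2)=\Res(Q_1,Q_2)^2$ by the classical identity $\Res(f,g^n)=\Res(f,g)^n$. Multiplying yields $\det A_{P_1,P_2,P_3}=\Res(Q_1,Q_2)^3$, which gives the claim in $\GW(k)$.

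The main obstacle is purely one of bookkeeping: verifying that the monomial bases used for $\alpha$ and $\beta$ compose exactly to give the matrix $A_{P_1,P_2,P_3}$ from Proposition \ref{prop: local index algebraically}, with no spurious sign or permutation entering. This is a direct check, and can be spot-checked on the example $Q_1=u^2$, $Q_2=v^2$, for which both $\det A_{P_1,P_2,P_3}$ and $\Res(Q_1,Q_2)^3$ visibly equal $1$.
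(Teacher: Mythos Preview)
Your argument is correct and more conceptual than the paper's. The paper simply asserts that ``a calculation shows $\det A_{P_1,P_2,P_3}=\Res(Q_1,Q_2)^3(\det N)^2$'' where $N$ is the coefficient matrix of $(R_1,R_2,R_3)$, leaving the verification to brute force. Your reduction to $(R_1,R_2,R_3)=(s^2,st,t^2)$ via a $k$-linear change of $x_1,x_2,x_3$ is exactly what isolates the $(\det N)^2$ factor---this is the same transformation rule $\det A_{BP}=(\det B)^2\det A_P$ already used implicitly in the generic-case proof---and your factorization $\mu=\beta\circ\alpha$ of the multiplication map then explains structurally why the remaining factor is $\Res(Q_1,Q_2)^3$: one resultant from the Sylvester block of $\alpha$ and a squared resultant from $\beta$ via $\Res(Q_1,Q_2^2)=\Res(Q_1,Q_2)^2$. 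The bookkeeping worry you flag is harmless: with the monomial bases you describe for domain, intermediate space, and codomain, the matrix of $\mu$ is literally $A_{Q_1^2,Q_1Q_2,Q_2^2}$ (same column and row ordering as in Proposition~\ref{prop: local index algebraically}), so no sign or permutation enters, and your spot-check at $Q_1=u^2$, $Q_2=v^2$ confirms this. What you gain over the paper's approach is an explanation rather than a verification; what the paper's direct calculation gains is that it does not require the preliminary observation that $R_1,R_2,R_3$ are linearly independent.
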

\begin{proof}
Let $R_1=c_2u^2+c_1uv+c_0v^2$, $R_2=d_2u^2+d_1uv+d_0v^2$ and $R_3=e_2u^2+e_1uv+e_0v^2$. 
A calculation shows that 
\[\det A_{P_1,P_2,P_3}=\Res(Q_1,Q_2)^3(\det N)^2\] 
where 
\[
N=\begin{pmatrix}
c_2 & d_2 & e_2\\
c_1 & d_1 & e_1\\
c_0 & d_0 & e_0
\end{pmatrix}.
\]
Hence, $\langle\det A_{P_1,P_2,P_3}\rangle=\langle\Res(Q_1,Q_2)^3(\det N)^2\rangle=\langle\Res(Q_1,Q_2)\rangle\in \GW(k)$.
\end{proof}
So, $\langle \det A_{P_1,P_2,P_3}\rangle$ equals the degree of the Segre involution corresponding to the degree $2$ map $(Q_1:Q_2):l\rightarrow \PP^1$.

\begin{remark}
In this case, the type of $l$ reminds of Kass and Wickelgren's definition of the type of a line on a cubic surface. For lines on cubic surfaces the Gauss map $(P_1:P_2):l\rightarrow \PP^1$ has degree $2$ and $\operatorname{Type}(l)=\langle \Res(P_1,P_2)\rangle\in \GW(k)$ \cite[$\S3$]{MR4247570}.
\end{remark}
\begin{example}
\label{example: infinitely many double points}
Let $P_1=u^2(u^2+v^2)$, $P_2=u^2v^2$ and $P_3=v^2(u^2+v^2)$. Then $C(t)=C(-t)$ for each $t\in l\cong\PP^1$. Hence, $C$ factors through $(u^2:v^2):\PP^1\rightarrow \PP^1$ and the corresponding Segre involution $\PP^1\rightarrow
 \PP^1$ is given by $t\mapsto -t$ which has degree $1\in k^{\times}/(k^{\times})^2$ and one computes that $\langle\det A_{P_1,P_2,P_3}\rangle=\langle 1\rangle\in\GW(k)$.
\end{example}

\begin{remark}
Let $p$ be a singular point on a hypersurface $Y=\{h=0\}$, such that the gradient $\operatorname{grad} h$ has an isolated zero at $p$. 
In \cite[\S6.3]{MR4245478} Wickelgren and the author show that for a general deformation of $Y$ the $\A^1$-Milnor number at $p$ equals the sum of $\A^1$-Milnor numbers at the singularities which $p$ bifurcates into. 
We can apply the same argument to our situation. 
Let $l$ be an isolated not necessarily simple line on a quintic threefold $X$. If we deform the threefold $X=\{f=0\}\subset \PP^4_k$ to $X_t=\{f+tg=0\}\subset \PP^4_{k[[t]]}$ for $g$ a general homogeneous degree $5$ polynomial, then the local index $\ind_l\sigma_f$ equals the sum of local indices at the lines the line $l$ deforms to by \cite[Theorem 5]{MR4245478}. We expect these deformations of $l$ to be simple with a Gauss map with three double points (in general position) when the deformation is general, in which case $\ind_l\sigma_f$ would equal the sum of types of lines $l$ deforms to.
\end{remark}

\section{The dynamic Euler number and excess intersection}
\label{section: dynamic Euler number}
Let $\pi:E\rightarrow Y$ be a relatively oriented vector bundle of rank $d$ over a smooth proper $d$-dimensional scheme $Y$ over $k$.
We have seen that for a section $\sigma$ with only isolated zeros, the $\A^1$-Euler number $e^{\A^1}(E)$ is the sum of local indices at the finitely many isolated zeros of $\sigma$. However, many `nice' sections $\sigma$ have non-isolated zeros and we have an \emph{excess intersection}. In this section, we will use \emph{dynamic intersection} to express $e^{\A^1}(E)$ as the sum of local contributions of finitely many closed points in $\sigma^{-1}(0)$ that deform with a general deformation of $\sigma$.

Excess intersection of Grothendieck-Witt groups has already been defined and studied by Fasel in \cite{MR2563143} and Euler classes with support were defined in \cite[Definition 5.1]{MR4198841} and further studied in \cite{MR4321205}. Remark 5.20 in \cite{BW} shows that the contribution from a non-isolated zero which is regularly embedded, is the Euler number of a certain excess bundle.

\subsection{Fulton's intersection product}
Classically, we can define the Euler class $e(E,\sigma)$ of a rank $r$ vector bundle $\pi:E\rightarrow Y$ over a $d$-dimensional scheme $Y$ over $\C$ with respect to a section $\sigma$ as the \emph{intersection product} of $\sigma$ by the zero section $s_0$  \cite[Chapter 6]{MR1644323}.
\begin{equation}
\label{eq: Euler class intersection product}
\begin{tikzcd}
\sigma^{-1}(0)\arrow{r}{s_0'}\arrow{d}{\sigma'} & Y\arrow{d}{\sigma}\\
Y\arrow{r}{s_0} & E
\end{tikzcd}
\end{equation}
Let $C=C_{\sigma^{-1}(0)}Y$ be the normal cone to the embedding $s_0':\sigma^{-1}(0)\rightarrow Y$.
By \cite[p.94]{MR1644323} there is a closed embedding $C\hookrightarrow \sigma'^*E$  which defines a class $[C]\in\CH_d(E\vert_{\sigma^{-1}(0)})$.
The \emph{intersection product} of $\sigma$ by the zero section $s_0$ is the image of $[C]$ under the isomorphism $\sigma^*:\CH_d(\sigma'^*E)\rightarrow \CH_{d-r}(\sigma^{-1}(0))$ and we define the \emph{Euler class $e(E,\sigma)$ with respect to $\sigma$} to be this intersection product
\[e(E,\sigma)=\sigma^*[C]\in \CH_{d-r}(\sigma^{-1}(0)).\]
The image of $e(E,\sigma)$ in $\CH_{d-r}(Y)$ under the inclusion $\CH_{d-r}(\sigma^{-1}(0))\rightarrow \CH_{d-r}(Y)$ is independent of the section $\sigma$ and called the \emph{Euler class} $e(E)$ of $\pi:E\rightarrow Y$.

Let $C_1,\dots,C_s$ be the irreducible subvarieties of $C$. Then $[C]=\sum_{i=1}^s m_i[C_i]$ where $m_i$ is the geometric multiplicity of $C_i$ in $C$.
The subvarieties $Z_i=\pi(C_i)$ of $\sigma^{-1}(0)$ are called \emph{distinguished varieties} of the intersection product and 
\begin{equation}
\label{eq: indices of dist varieities classically}
e(E,\sigma)=\sum m_i\alpha_i
\end{equation}
where $\alpha_i=\sigma^*[C_i]\in \CH_{d-r}(Z_i)$. So the intersection product splits up as a sum of cycles supported on the distinguished varieties.

If $r=d$ and $\sigma$ intersects $s_0$ transversally, then $\sigma^{-1}(0)$ consists of the isolated zeros of $\sigma$ which are the distinguished varieties of the intersection product \eqref{eq: Euler class intersection product}. That means, $e(E,\sigma)$ is supported on the isolated zeros of $\sigma$. When $\pi:E\rightarrow Y$ is also relatively oriented and $Y$ is smooth and proper over an arbitrary field $k$ (and still $r=d$), we have seen that the $\A^1$-Euler number $e^{\A^1}(E)$ is equal to the sum of local indices at the isolated zeros. In other words, the $\A^1$-Euler number $e^{\A^1}(E)$ is `supported' on the zeros of a section with only isolated zeros, that is on the distinguished varieties.

Oriented Chow groups $\tCH^i(Y,L)$ were introduced by Barge and Morel \cite{MR1753295} and further studied by Fasel \cite{MR2542148}. They are an `oriented version' of Chow groups which can be defined for a (smooth) scheme $Y$ over any field $k$. Here, $L\rightarrow Y$ is a line bundle and defines a `twist' of the oriented Chow group.  Levine defines an \emph{Euler class with support}
\[e_{Z}(E,\sigma)\in\tCH^d_Z(Y,(\det E)^{-1})\]
in \cite[p.2191]{MR4198841} which is supported on a closed subset $Z\subset Y$. Let $L$ be a 1-dimensional $k$-vector space and denote by $\GW(k,L)$ the Grothendieck group of isometry classes of finite rank non-degenerate symmetric bilinear forms $V\times V\rightarrow L$. Then for a closed point $x\in Y$ it holds that $\tCH_x^d(Y,(\det E)^{-1})=\GW(k(x),(\det E\otimes \det\mathfrak{m}_x/\mathfrak{m}_x^2)^{-1})$ and for $x$ an isolated zero of a section $\sigma$ of the relatively oriented bundle $\pi:E\rightarrow Y$, the local index $\ind_x\sigma$ computes $e_x(E,\sigma)$ as follows.
Let $\psi:U\rightarrow \Spec (k(x)[x_1,\dots,x_n])$ be Nisnevich coordinates around $x$ and $E\vert_U\cong U\times \Spec (k(x)[y_1,\dots, y_d])$ a trivialization compatible with $\psi$ and the relative orientation of $E$. Then 
\[e_x(E,\sigma)=\ind_x\sigma_{k(x)}\otimes y_1\wedge\dots\wedge y_d\otimes \bar{x}_1^*\wedge\dots\wedge\bar{x}_d^*\in \GW(k(x),(\det E\otimes \det\mathfrak{m}_x/\mathfrak{m}_x^2)^{-1})\]
by \cite[$\S5$]{MR4198841} where the $\bar{x}_i$ are the images of the Nisnevich coordinates in $\mathfrak{m}_x/\mathfrak{m}_x^2$. In other word the local index of an isolated zero agrees with this contribution described by Levine. 

We conjecture that for any section $\sigma$ of $\pi:E\rightarrow Y$, not only sections with only isolated zeros, the $\A^1$-Euler number is the sum of `local indices' at the distinguished varieties $Z_i$ as in the classical case \eqref{eq: indices of dist varieities classically}. 
We will see that this is true in the case of the section $\sigma_F$ of $\mathcal{E}=\Sym^5\mathcal{S}^*\rightarrow \Gr(2,5)$ defined by the Fermat quintic threefold $\{F=X_0^5+X_1^5+X_2^5+X_3^5+X_4^5=0\}\subset \PP^4$, that is, we will show that there are well-defined `local indices' at the distinguished varieties of the intersection product of $\sigma_F$ by the zero section $s_0$. We then verify that the sum of these local indices is equal to $e^{\A^1}(\mathcal{E})$ in $\GW(k)$. The local index at a distinguished variety $Z$ can be computed as follows. For each deformation of the Fermat we can assign local indices to the points in $\sigma_F^{-1}(0)\subset \Gr(2,5)$ that deform and the local index at $Z$ is the sum of local indices at points in $Z$ that deform with a general deformation. It turns out that the local index at $Z$ is well-defined, that is, it does not depend on the chosen general deformation of the Fermat.

\subsection{The dynamic Euler number}
One way to find the well-defined zero cycle supported on the distinguished varieties classically is to use \emph{dynamic intersection} \cite[Chapter 11]{MR1644323}. We deform a section $\sigma$ of a $\operatorname{rank}E=\dim Y$ bundle $\pi:E\rightarrow Y$ to $\sigma_t:=\sigma+t\sigma_1+t^2\sigma_2+\dots$ where the $\sigma_i$ are general sections of $E$. The deformation has finitely many isolated zeros and the zero cycle we are looking for is the `limit' $t\rightarrow 0$ of $\sigma_t^{-1}(0)$ which is supported on $\sigma^{-1}(0)$ \cite[Theorem 11.2]{MR1644323}. Moreover, for a general deformation the zero cycle $m_i\alpha_i\in \CH_{d-r}(Z_i)$ from \eqref{eq: indices of dist varieities classically} supported on a distinguished variety $Z_i$ is the part of the limit of $\sigma_t^{-1}(0)$ supported on $Z_i$ \cite[Proposition 11.3]{MR1644323}.

It follows that over the complex numbers the Euler number can be can be computed as the count of zeros of a section (with non-isolated zeros) that deform with a general deformation. For example, Segre finds 27 distinguished lines on the union of three hyperplanes in $\PP^3$ which deform with a general deformation\cite{MR0008171} and 27 is the classical count of lines on a general cubic surface \cite{cayley_2009}. 
Albano and Katz find the limits of 2875 complex lines on a general deformation of the Fermat quintic threefold in \cite{MR1024767}. In section \ref{section: lines on fermat} we will use those 2875 limiting lines to compute the `dynamic Euler number' of $\Sym^5\mathcal{S^*}\rightarrow \Gr(2,5)$ valued in $\GW(k((t)))$.

\subsubsection{$\GW(k((t)))$}
In order to understand the computations in section \ref{section: lines on fermat}, we recall some properties of $\GW(k((t)))$.
Any uni in $k((t))$ is of the form $u=\sum_{i=m}^{\infty}a_it^i$ with $a_m\neq 0$.
One can factor $u$ as $u=a_mt^m(1+\sum_{i=1}^{\infty}b_it^i)$ with $b_i=\frac{a_{i+m}}{a_m}$. 
\begin{claim}
\label{claim: structure of GW(k((t)))}
$1+\sum_{i=1}^{\infty}b_i$ is a square in $k((t))^{\times}$. 
\end{claim}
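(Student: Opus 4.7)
The plan is to produce a square root of $u:=1+\sum_{i=1}^{\infty}b_it^i$ living in the subring $k[[t]]\subset k((t))$. Since $k[[t]]$ is a complete discrete valuation ring with residue field $k$ and uniformizer $t$, the natural tool is Hensel's lemma applied to the polynomial $f(X)=X^2-u\in k[[t]][X]$. Modulo $t$ the polynomial reduces to $X^2-1\in k[X]$, which has the root $X=1$; this root is simple precisely because $f'(1)=2$ is a unit in $k$, using the standing assumption that $\operatorname{char}k\neq 2$. Hensel's lemma then lifts $X=1$ uniquely to a root $g(t)\in 1+tk[[t]]$ of $f$, so $g(t)^2=u$ and $u$ is a square in $k[[t]]^\times\subset k((t))^\times$.

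If one prefers an explicit argument avoiding Hensel, I would instead construct $g(t)=1+\sum_{n\geq 1}c_n t^n$ coefficient by coefficient. Expanding $g(t)^2$ and comparing with $u$, the coefficient of $t^n$ gives
\[
2c_n = b_n - \sum_{i+j=n,\ 1\leq i,j\leq n-1}c_ic_j,
\]
which determines $c_n\in k$ uniquely from $c_1,\dots,c_{n-1}$ because $2\in k^\times$. Thus the recursion produces a well-defined $g(t)\in 1+tk[[t]]$ with $g(t)^2=u$. Both arguments proceed in essentially the same way, and neither presents any real obstacle once one has invoked $\operatorname{char}k\neq 2$; the content of the claim is just that the $1$-units in $k[[t]]$ form a $2$-divisible group under multiplication when $2$ is invertible.

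The only subtle point to flag is the necessity of the hypothesis $\operatorname{char}k\neq 2$, which is in force throughout the paper: in characteristic $2$ the element $1+t\in \mathbb{F}_2((t))$ is not a square, so the statement is genuinely false there. Once this hypothesis is noted, the proof is a two-line appeal to Hensel's lemma (or to the above recursion), and the claim follows.
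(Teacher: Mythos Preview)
Your proposal is correct and your second argument is exactly the paper's proof: the paper simply states that one can solve inductively for $c_i\in k$ with $1+\sum_{i\ge 1}b_it^i=(1+\sum_{i\ge 1}c_it^i)^2$, which is precisely your recursion $2c_n=b_n-\sum_{i+j=n,\,1\le i,j\le n-1}c_ic_j$. Your Hensel's lemma version is just the standard abstract repackaging of this same induction, and your remark on the necessity of $\operatorname{char}k\neq 2$ is a nice addition that the paper leaves implicit.
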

\begin{proof}
$1+\sum_{i=1}^{\infty}b_i$ is even a square in $k[[t]]^{\times}$ since one can solve inductively for $c_i\in k$ such that $1+\sum_{i=1}^{\infty}b_i=(1+\sum_{i=1}^{\infty}c_i)^2$.
\end{proof}
It follows that
 \[\langle u\rangle=\langle \sum_{i=m}^{\infty}a_it^i \rangle=\langle a_mt^m \rangle= \begin{cases*}
      \langle a_m\rangle & if m \text{ even}\\
      \langle ta_m\rangle        & if m \text{ odd.}
    \end{cases*}\]
    
Note that $\HH:=\langle1\rangle+\langle-1\rangle=\langle t\rangle+\langle-t\rangle$. Claim \ref{claim: structure of GW(k((t)))} illustrates the content of the following theorem.
\begin{theorem}[Springer's Theorem \cite{MR2104929}]
\label{thm: Springer}
\[\frac{\GW(k)\oplus \GW(k)}{\Z(\HH,-\HH)}\xrightarrow{(i,j)}\GW(k((t))) \]
is an isomorphism. Here $i(\langle a\rangle)=\langle a\rangle\in \GW(k((t)))$ and $j(\langle a\rangle)=\langle ta\rangle\in \GW(k((t)))$.
\end{theorem}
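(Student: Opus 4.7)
My plan is to establish the three standard properties of the map $(i,j)$: well-definedness, surjectivity, and injectivity, working outward from the explicit description of rank-one forms already given in Claim~\ref{claim: structure of GW(k((t)))}.

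First, for well-definedness I would verify that $(\HH, -\HH)$ maps to zero. By definition
\[(i,j)(\HH,-\HH)=i(\HH)-j(\HH)=(\langle 1\rangle+\langle-1\rangle)-(\langle t\rangle+\langle-t\rangle),\]
so this reduces to the identity $\langle 1,-1\rangle=\langle t,-t\rangle$ in $\GW(k((t)))$. Both sides are isometric to the hyperbolic plane, since for any $a\in k((t))^\times$ the binary form $\langle a,-a\rangle$ is hyperbolic.

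Second, for surjectivity I would diagonalize and apply Claim~\ref{claim: structure of GW(k((t)))}. Every class in $\GW(k((t)))$ is a $\Z$-linear combination of rank-one classes $\langle u\rangle$ for units $u\in k((t))^\times$. Writing $u=a_m t^m\cdot(1+\sum_{i\geq 1}b_it^i)$ with $a_m\in k^\times$, the claim shows that $\langle u\rangle=\langle a_m\rangle$ when $m$ is even and $\langle u\rangle=\langle ta_m\rangle$ when $m$ is odd, so $\langle u\rangle$ lies in the image of $i$ or $j$ respectively.

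Third, the substantive step is injectivity. Suppose $(\phi,\psi)\in\GW(k)\oplus\GW(k)$ satisfies $i(\phi)+j(\psi)=0$. I would extract two invariants from this identity. The rank homomorphism $\operatorname{rank}\colon\GW(k((t)))\to\Z$ gives $\operatorname{rank}(\phi)+\operatorname{rank}(\psi)=0$. To get the second piece of information, I would pass to the Witt quotient $W(k((t)))=\GW(k((t)))/\Z\HH$ and invoke the Witt-theoretic version of Springer's theorem, which asserts that the analogous map $(\bar i,\bar j)\colon W(k)\oplus W(k)\xrightarrow{\cong} W(k((t)))$ is an isomorphism. The vanishing of $i(\phi)+j(\psi)$ in $W(k((t)))$ then forces $\phi$ and $\psi$ to vanish in $W(k)$, so $\phi=p\HH$ and $\psi=q\HH$ in $\GW(k)$ for some integers $p,q\in\Z$. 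Combined with the rank equation $2p+2q=0$, this gives $q=-p$, so $(\phi,\psi)=p(\HH,-\HH)$, as required.

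The main obstacle is the Witt-theoretic Springer isomorphism $W(k((t)))\cong W(k)\oplus W(k)$, which is the genuinely nontrivial part of the statement. Its standard proof constructs an inverse using the first and second residue homomorphisms $\partial_1,\partial_t\colon W(k((t)))\to W(k)$ attached to the $t$-adic valuation, together with the fact that Witt classes of forms over $k((t))$ admit a canonical decomposition into a unit part and a uniformizer part. Since this is developed in detail in \cite{MR2104929}, I would quote it rather than reprove it; the rank bookkeeping above is all that is needed to lift the Witt statement to the Grothendieck--Witt setting.
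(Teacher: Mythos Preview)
The paper does not prove this theorem at all; it simply states it with a citation to \cite{MR2104929} (Lam's book), after noting that Claim~\ref{claim: structure of GW(k((t)))} ``illustrates the content'' of the result. So there is no paper proof to compare against in any substantive sense.

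Your argument is correct. Well-definedness and surjectivity are handled exactly as one would expect, with surjectivity using precisely the computation the paper records in Claim~\ref{claim: structure of GW(k((t)))}. For injectivity, reducing to the Witt-group version of Springer's theorem via the rank homomorphism and then bookkeeping with multiples of $\HH$ is the standard way to lift from $W$ to $\GW$, and you carry it out cleanly. Your acknowledgement that the Witt-theoretic isomorphism $W(k((t)))\cong W(k)\oplus W(k)$ is the genuine content, proved in \cite{MR2104929} via residue homomorphisms, is appropriate: that is exactly why the paper cites the result rather than proving it.
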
 

\subsubsection{Definition of the dynamic Euler number}

Let $\pi:E\rightarrow Y$ be a relatively orientable vector bundle with $\operatorname{rank}E=\operatorname{dim} Y$, $Y$ smooth and proper over $k$, and let $\sigma:Y\rightarrow E$ be a section. We deform the section $\sigma$ to $\sigma_t=\sigma+t\sigma_1+t^2\sigma_2+\dots$ for $\sigma_i$ general sections of $E$. Then $\sigma_t$ is a general section of the base change $E_{k((t))}$ to the field $k((t))$. In particular, $\sigma_t$ has only isolated zeros and the $\A^1$-Euler number 
$e^{\A^1}(E_{k((t))})\in \GW(k((t)))$ is equal to the sum of local indices at those isolated zeros.

\begin{definition}
\label{defn: dynamic Euler number}
We call 
\begin{equation}
\label{eq: dyn euler}
e^{\operatorname{dynamic}}(E):=e^{\A^1}(E_{k((t))})=\sum_{x_t\in \sigma_{t}^{-1}(0)}\ind_{x_t}\sigma_t\in \GW(k((t)))
\end{equation}
the \emph{dynamic Euler number} of $E$.
\end{definition}

By functoriality of the Euler class this sum \eqref{eq: dyn euler} is in the image of the injective map $i:\GW(k)\rightarrow\GW(k((t)))$ from Springer's theorem \ref{thm: Springer}. In other words, the $\A^1$-Euler number $e^{\A^1}(E)$ in is the unique element of $\GW(k)$ that is mapped to $e^{\operatorname{dynamic}}(E)$ by $i$. We will see moreover how the local indices at the lines limiting to a distinguished variety of the intersection product of the Fermat section $\sigma_F$ by the zero section sum up to an element of $\GW(k)$ independent of the deformation, even though the local indices at these lines themselves depend on the deformation.


\section{The lines on the Fermat quintic threefold}
\label{section: lines on fermat}
Let $X=\{F=X_0^5+X_1^5+X_2^5+X_3^5+X_4^5=0\}\subset \PP^4$ be the Fermat quintic threefold. It is well known that there are infinitely many lines on $X$. In \cite[$\S1$]{MR1024767} Albano and Katz show that the complex lines on $X$ are precisely the lines that lie in one of the 50 irreducible components $X\cap V(X_i+\zeta X_j)\subset\PP^4$ where $i,j\in\{0,1,2,3,4\}$, $i\neq j$ and $\zeta$ is a 5th root of unity. Their argument remains true for $\bar{k}$-lines when $\operatorname{char}k\neq 2,5$. 
Let $\sigma_F$ be the section of $\mathcal{E}=\Sym^5\mathcal{S}^*\rightarrow \Gr(2,5)$ defined by $F$.
So $\sigma_F^{-1}(0)$ is the union of 50 irreducible components which we denote by $W_i$ for $i=1,\dots,50$.
Furthermore, Albano and Katz study the lines on $X$ which are limits of the 2875 lines on a family of threefolds 
\[X_t=\{F_t=F+tG+t^2H+\dots=0\}\rightarrow \Spec (\C[[t]])\] with $X_0=X$ and find the following. 
\begin{proposition}[Proposition 2.2 + 2.4 in \cite{MR1024767}]
\label{prop: lines mult 2}
For a general deformation $X_t$ there are exactly 10 complex lines in each component $W_i$ that deform with monodromy 2 in direction $t$. 
\end{proposition}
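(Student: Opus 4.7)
My strategy is to parametrise the lines in $W_i$ explicitly, compute the normal bundle of such a line in $X$, and then perform an order-by-order analysis of the deformation $X_t$ to identify the $10$ distinguished lines surviving with multiplicity $2$.

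\textbf{Setup.} After a projective linear change of coordinates I may assume $W := W_i = X \cap V(X_0 + X_1)$. Setting $X_1 = -X_0$ kills $X_0^5 + X_1^5$ and reduces the Fermat equation to $X_2^5 + X_3^5 + X_4^5 = 0$ inside $V(X_0+X_1) \cong \PP^3$, exhibiting $W$ as the cone over the smooth Fermat plane quintic curve $C := \{X_2^5+X_3^5+X_4^5=0\} \subset \PP^2$ (of genus $6$) with apex $P_0 = (1:-1:0:0:0)$. A direct computation shows that over a field of characteristic $\neq 2,5$ no linearly independent linear forms $A_2,A_3,A_4$ on $\PP^1$ can satisfy $A_2^5 + A_3^5 + A_4^5 \equiv 0$ (expanding and comparing the $A_2^4 A_3$-coefficient forces two of them to vanish), so every line in $W$ is a ruling of this cone. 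Hence the set of lines in $W$ is in bijection with the closed points of $C$, via $p = (a_2:a_3:a_4) \mapsto l_p:(u:v) \mapsto (u:-u:va_2:va_3:va_4)$.

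\textbf{Normal bundle and the $\sqrt{t}$-ansatz.} For each $p \in C$, the short exact sequence $0 \to N_{l_p/X} \to \mathcal{O}_{\PP^1}(1)^{\oplus 3} \to \mathcal{O}_{\PP^1}(5) \to 0$ is induced by the partials $\partial F/\partial X_i|_{l_p}$, which restrict to $5u^4$, $5u^4$ and $5a_i^4 v^4$ for $i=2,3,4$. The image of $H^0(\mathcal{O}(1)^{\oplus 3}) \to H^0(\mathcal{O}(5))$ is the codimension-$2$ subspace spanned by $u^5, u^4v, uv^4, v^5$, so
\[N_{l_p/X} \;\cong\; \mathcal{O}_{\PP^1}(1) \oplus \mathcal{O}_{\PP^1}(-3),\]
with $h^0 = h^1 = 2$. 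The sequence $0 \to N_{l_p/X} \to N_{l_p/X_t} \to \mathcal{O}_{l_p} \to 0$ has a generically nonzero connecting map $H^0(\mathcal O) \to H^1(N_{l_p/X})$, so no $t$-linear deformation of $l_p$ exists; one is forced into the ansatz $l_{p,t} = l_p + \sqrt{t}\,\delta_1 + t\,\delta_2 + \cdots$ with $\delta_1 \in H^0(l_p, N_{l_p/X}) = k^2$.

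\textbf{Second-order condition and count.} After absorbing the tangent direction along $W$, $\delta_1$ is parametrised modulo $T_p W$ by a single nilpotent scalar $\lambda$. Imposing $F_t(l_{p,t}) = 0$ to order $t$ yields
\[\tfrac{1}{2}\lambda^2\, Q(p) + [G|_{l_p}] \;\equiv\; 0 \qquad \text{in } H^1(l_p, N_{l_p/X}),\]
with $Q(p)$ the canonical quadratic form on the nilpotent direction. Solvability for $\lambda$ means $Q(p)$ and $[G|_{l_p}]$ are linearly dependent in the rank-$2$ space $H^1(N_{l_p/X})$; globally this is the vanishing of a section of the line bundle $L := \det(\mathcal{B}) \otimes (\mathcal{N}^{\mathrm{nilp}})^{\otimes -2}$, where $\mathcal{B}$ is the rank-$2$ bundle on $C$ with fibre $H^1(l_p,N_{l_p/X})$ and $\mathcal{N}^{\mathrm{nilp}}$ is the kernel line bundle of the derivative $N_{C/\Gr} \to \mathcal{E}|_C$. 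Using $\mathcal{S}^*|_C = \mathcal{O}_C \oplus \mathcal{O}_C(1)$ (with $\deg \mathcal{O}_C(1) = 5$) gives $\mathcal{E}|_C = \bigoplus_{k=0}^5 \mathcal{O}_C(k)$ of degree $75$, $c_1(T\Gr|_C) = 25$ via $T\Gr = \mathcal{S}^* \otimes \mathcal{Q}$, and $\deg T_C = -10$; the remaining $\deg \mathcal{N}^{\mathrm{nilp}}$ is extracted by Grothendieck--Riemann--Roch on the $\PP^1$-bundle $\PP(\mathcal{S}^*|_C) \to C$. Combining these gives $\deg L = 10$. For generic $G$ the resulting section is transverse, with $10$ simple zeros $p_1,\dots,p_{10} \in C$; at each $p_i$ the equation $\lambda^2 = c(p_i,G)$ has two solutions $\pm\sqrt{c(p_i,G)}$, producing two lines on $X_t$ limiting to $l_{p_i}$ and swapped by the square-root monodromy, i.e.\ monodromy $2$.

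\textbf{Main obstacle.} The arithmetic in the last step -- specifically $\deg \mathcal{N}^{\mathrm{nilp}}$ -- is the delicate point. One must globalise the splitting $\mathcal{O}(1)\oplus\mathcal{O}(-3)$ over $C$: the $\mathcal{O}(1)$-subbundle of $\mathcal{O}_{\PP^1}(1)^{\oplus 3}$ is visibly cut out by the kernel vector $(0,\, a_4(a_4^3-a_2^3),\, -a_3(a_3^3-a_2^3))$ read off from the surjection above, whose components are sections of computable twists of $\mathcal{O}_C(1)$. Tracking these twists carefully through Riemann--Roch, and sanity-checking either at two or three explicit $p \in C$ or against the partial classical count $50 \times 10 \times 2 = 1000$ of the $2875$ lines on a general $X_t$, is essential to pin down the factor $10$.
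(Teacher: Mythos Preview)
Your cohomological route is genuinely different from what the paper does, and it also has a real gap at exactly the point you flag.

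\textbf{What the paper actually does.} The paper does not invoke normal bundles, obstruction theory, or Riemann--Roch at all. Following Albano--Katz, it simply writes the section $\sigma_{F_t}$ in explicit affine coordinates on $\Gr(2,5)$ around a ruling $l_0=(u:-u:v:av:bv)$ with $1+a^5+b^5=0$, and expands $F_t|_{l_t}$ as six power series $(f_t)_1,\dots,(f_t)_6$ in $t$. After introducing the $\sqrt{t}$-monodromy by replacing $t\mapsto t^2$, the $t^2$-coefficient of the equation $(f_t)_3=0$ is $g_3(0)$, the $u^3v^2$-coefficient of $G|_{l_0}$. Because $G$ is homogeneous of degree $5$ and $l_0$ is parametrised linearly in $(1,a,b)$, this coefficient is a homogeneous polynomial $Q(a,b)$ of degree~$2$. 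The remaining equations are solved uniquely by induction once $Q(a,b)=0$, so the deforming lines are the zero locus of $Q$ on the degree-$5$ curve $\{1+a^5+b^5=0\}$, and B\'ezout gives $2\cdot 5=10$. That is the entire count; no Chern class arithmetic enters.

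\textbf{Your approach and its gap.} Your normal-bundle identification $N_{l_p/X}\cong\mathcal{O}(1)\oplus\mathcal{O}(-3)$ is correct, and the second-order obstruction you write down is the right shape. But the step ``combining these gives $\deg L=10$'' is not carried out: you leave $\deg\mathcal{N}^{\mathrm{nilp}}$ as a ``delicate point'' to be extracted by GRR on a $\PP^1$-bundle, and then sanity-checked against the known answer. As written, the argument does not independently establish the number $10$. By contrast, the paper's coordinate calculation produces the degree-$2$ polynomial $Q$ for free, and the $10$ is then immediate and does not rely on knowing the total $2875$ in advance.

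\textbf{Comparison.} Your framework is more conceptual and would generalise to other obstructed components, but for the Fermat it replaces a two-line B\'ezout count by a Chern-class computation you have not completed. The paper's explicit approach also has the practical advantage that it outputs the actual defining ideal $(1+a^5+b^5,\,Q(a,b))$ of the $10$-point scheme, which is exactly what is needed downstream to compute the trace form in $\GW(k((t)))$.
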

\begin{proposition}[Proposition 2.3 + 2.4 in \cite{MR1024767}]
\label{prop: lines mult 5}
The lines that lie in the intersection of two components $W_i\cap W_j$, $i\neq j$ deform with monodromy 5 in direction $t$ for a general deformation $X_t$.
\end{proposition}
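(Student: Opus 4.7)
The plan is to show, for a general deformation $F_t = F + tG + \dotsb$, that exactly $5$ isolated lines on $X_t$ converge to $l$ as $t \to 0$, or equivalently that the relative Fano scheme $\sigma_{F_t}^{-1}(0) \to \Spec k[[t]]$ has local length $5$ at $(l, 0)$. By the $(S_5 \ltimes \mu_5^5)$-symmetry of the Fermat it suffices to treat a single representative, so I would take $W_i = X \cap V(X_0 + \zeta_1 X_1)$ and $W_j = X \cap V(X_2 + \zeta_2 X_3)$ with $\zeta_1^5 = \zeta_2^5 = 1$; the unique line of $W_i \cap W_j$ is the join of the two cone-vertices,
\[
l = \{X_4 = X_0 + \zeta_1 X_1 = X_2 + \zeta_2 X_3 = 0\}.
\]
Introducing $Y_1 = X_4$, $Y_2 = X_0 + \zeta_1 X_1$, $Y_3 = X_2 + \zeta_2 X_3$ on $\PP^4$ and the Nisnevich coordinates $(a, b, c, d, e, f)$ of Subsection~\ref{subsection: local coordinates} (so $Y_1 = \alpha a + \beta b$, $Y_2 = \alpha c + \beta d$, $Y_3 = \alpha e + \beta f$ along a nearby line), a direct expansion exploiting $\zeta_i^5 = 1$ yields
\[
F = Y_1^5 + 5\zeta_1^4 U^4 Y_2 + 5\zeta_2^4 V^4 Y_3 + R, \qquad R \in (Y_2, Y_3)^2,
\]
with $R$ containing no $Y_1$- or $Y_2 Y_3$-monomials. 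Thus $P_1 = 0$, $P_2 = 5\zeta_1^4 U^4$, $P_3 = 5\zeta_2^4 V^4$ in the notation of Section~\ref{section: local index}, so $\det A_{P_1, P_2, P_3} = 0$ and Lemma~\ref{lemma: detA=0} confirms that $l$ is non-simple.

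Writing $F_t = F + tG$ with $G$ a generic quintic, the six local equations $f_1^t, \dotsc, f_6^t$ of $\sigma_{F_t}^{-1}(0)$ are the coefficients of $\alpha^5, \dotsc, \beta^5$ in $F_t$ along the nearby line. The Jacobian at $(a, \dotsc, f, t) = 0$ has rank exactly $4$: the entries $\partial f_1/\partial c = \partial f_2/\partial d = 5\zeta_1^4$ and $\partial f_5/\partial e = \partial f_6/\partial f = 5\zeta_2^4$ are nonzero, while $f_3^t$ and $f_4^t$ vanish to total order $\geq 2$. The implicit function theorem uniquely determines $(c, d, e, f)$ as power series in $(a, b, t)$, reducing the system to the two residual equations
\[
10 a^3 b^2 + t g_3^0(a, b) + \text{h.o.t.} = 0, \qquad 10 a^2 b^3 + t g_4^0(a, b) + \text{h.o.t.} = 0
\]
in $k[[a, b, t]]$, where $g_3^0(0, 0)$ and $g_4^0(0, 0)$ are the coefficients of $\alpha^3 \beta^2$ and $\alpha^2 \beta^3$ in $G(0, 0, 0, \alpha, \beta)$ and are nonzero for generic $G$. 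Over the generic fibre $\overline{k((t))}$ the system forces $b = (g_4^0(0,0)/g_3^0(0,0)) \, a + O(a^2)$ and $a^5 = (\text{unit}) \cdot t$, producing exactly $5$ solutions; equivalently, the local $k[[t]]$-algebra is free of rank $5$, so the monodromy is $5$.

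The main obstacle is justifying rigorously that the higher-order coupling through $R$ does not disturb the leading-order behavior during the back-substitution. The cleanest route is a direct computation of the two substituted equations to the necessary finite order followed by an explicit exhibition of the five roots over $\overline{k((t))}$ as above; the appearance of $5$ reflects the degree of $F$ through the $Y_1^5$ term and the quintic structure of each of the two cones $X \cap V(Y_i)$ at the join $l$ of their vertices.
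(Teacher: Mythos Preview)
Your approach is essentially the same as the paper's (carried out in detail in Proposition~\ref{prop: lt for mult 5}): both reduce to the same local picture where four of the six coordinates are controlled by the linear terms $5x,5x',5y,5y'$ and the remaining two coordinates satisfy the residual system $10z^3z'^2+t\cdot g_3(0)=0$, $10z^2z'^3+t\cdot g_4(0)=0$, yielding $z^5\sim t$ and hence five deformations. The only differences are stylistic: the paper pre-introduces the $\Z/5$ monodromy by writing $F_t=F+t^5G+\dotsb$ and then solves for all power-series coefficients by an explicit induction, whereas you keep $t$ as is and invoke the implicit function theorem to eliminate $(c,d,e,f)$ in one stroke; the paper's inductive argument is precisely what resolves the ``obstacle'' you flag about higher-order coupling through $R$, so if you want a self-contained justification you can simply adopt that induction in place of the IFT step. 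One small imprecision: the full local $k[[t]]$-algebra of the relative Fano scheme at $(l,0)$ is \emph{not} free of rank~$5$ (the special fibre is positive-dimensional), so your length-$5$ statement should refer to the closure of the generic fibre or, equivalently, to the count of $\overline{k((t))}$-points specializing to $l$, which is what you actually compute.
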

This makes in total $2\cdot 10 \cdot 50 + 375\cdot 5=2875$ complex lines as expected (see e.g. \cite{MR3617981}). 
Let 
\[X_t:=\{F_t:=F+tG+t^2H+\dots\}\subset \Spec (k[[t]]).\]
We will show that their computations work over fields of characteristic not equal to 2 or 5 in Proposition \ref{prop: lt for mult 2} and Proposition \ref{prop: lt for mult 5} and thus we have found all the lines on $X_t\rightarrow \Spec k((t))$.

To compute the dynamic Euler number of $\mathcal{E}=\Sym^5\mathcal{S}^*$, we compute the sum of local indices at the lines on the base change $X_t\otimes k((t))=\{F_t=0\}\subset \PP^4_{k((t))}$, which we expect to contain 2875 lines defined over the algebraic closure of $k((t))$.  
The base change $l_t\otimes k((t))$ of one of the 2875 lines described in Proposition \ref{prop: lt for mult 2} and Proposition \ref{prop: lt for mult 5} lies on $X_t\otimes k((t))$. So we have found all the lines on $X_t\otimes k((t))$. 
 By abuse of notation, we denote $X_t\otimes k((t))$ by $X_t$ and $l_t\otimes k((t))$ by $l_t$.
\begin{equation}
\label{eq: Euler number of deformation of Fermat}
e^{\operatorname{dynamic}}(\mathcal{E})=e^{\A^1}(\mathcal{E}_{k((t))})=\sum_{l_t\text{ line on }X_t}\operatorname{ind}_{l_t}\sigma_{F_t} \in \GW(k((t)))
\end{equation}




\subsection{Multiplicity 5 lines}
\label{subsection: mult 5 lines}
Let $l_0$ be a line that lies in the intersection of two components $W_i\cap W_j$ with $i\neq j$. After base change we can assume that $l_0=\{(u:-u:v:-v:0)\}\subset \PP^5$. 
As in (see \cite[Proposition 2.4]{MR1024767}) we introduce $\Z/5$ monodromy: 
Since we expect $l_0$ to deform to order 5, we expect a deformation $l_t$ of $l_0$ to be a $\Spec L[[t^{1/5}]]$ point of $\{\sigma_{F_t}=0\}\subset \Gr(2,5)_{k[[t]]}$ where $L$ is algebraic over $k$.
In particular, we can replace $t^{\frac{1}{5}}$ by $t$ and consider the deformation $F_t=F+t^5G+t^{10}H+\dots$. 

We will show that $l_0$ deforms when $\operatorname{char}k\neq 2,5$ and there are 5 lines $l_t$ on the family $X_t$ over $\Spec(k[[t]])$ with $l_t\vert_{t=0}=l_0$.

We perform a coordinate change such that $l_0\otimes k((t))=(0:0:0:u:v)\in \PP^5_{k((t))}$, that is, we replace the coordinates $X_0,X_1,X_2,X_3,X_4$ on $\PP^4_{k((t))}$ by $Y_0=X_0+X_1$, $Y_1=X_2+X_3$, $Y_2=X_4$, $Y_3=X_0$, $Y_4=X_2$. Then $F(Y_0,Y_1,Y_2,Y_3,Y_4)=Y_3^5+(Y_0-Y_3)^5+Y_4^5+(Y_1-Y_4)^5+Y_2^5$.

We choose local coordinates $\Spec (k((t))[x,x',y,y',z,z'])\subset \Gr(2,5)_{k((t))}$, parametrizing the lines spanned by $xe_1+ye_2+ze_3+e_4$ and $x'e_1+y'e_2+z'e_3+e_5$ in $k((t))^5$ for the standard basis $(e_1,\dots,e_5)$ of $k((t))^5$.
Note that $l_0$, which is the span of $e_4$ and $e_5$, corresponds to $0\in \Spec (k((t))[x,x',y,y',z,z'])$ in the local coordinates on $\Gr(2,5)$.

The section $\sigma_{F_t}$ of $\mathcal{E}_{k((t))}$ is
locally given by $f_t=((f_t)_1,\dots,(f_t)_6):\A^6_{k((t))}\rightarrow \A^6_{k((t))}$ where $l_0$ is $0\in \A^6_{k((t))}$ with

\begin{align}
\label{eq: mult 5 equations}
\begin{split}
(f_t)_1=&x^5+y^5+z^5-5x^4+10x^3-10x^2+\mathcolorbox{yellow}{5x}+t^5g_1+\dots\\
(f_t)_2=&5x^4x'+5y^4y'+5z^4z'-5y^4-20x^3x'+30x^2x'-20xx'+\mathcolorbox{yellow}{5x'}+t^5g_2+\dots\\
(f_t)_3=&10x^3x'^2+10y^3y'^2+\mathcolorbox{green}{10z^3z'^2}-30x^2x'^2-20y^3y'+10y^3+30xx'^2-10x'^2+t^5g_3+\dots\\
(f_t)_4=&10x^2x'^3+10y^2y'^3+\mathcolorbox{green}{10z^2z'^3}-20xx'^3-30y^2y'^2+10x'^3+30y^2y'-10y^2+t^5g_4+\dots\\
(f_t)_5=&5xx'^4+5yy'^4+5zz'^4-5x'^4-20yy'^3+30yy'^2-20yy'+\mathcolorbox{yellow}{5y}+t^5g_5+\dots\\
(f_t)_6=&x'^5+y'^5+z'^5-5y'^4+10y'^3-10y'^2+\mathcolorbox{yellow}{5y'}+t^5g_6+\dots
\end{split}
\end{align}
\normalsize
in the chosen coordinates and trivialization of $\mathcal{E}\vert_U$ defined in \eqref{eq:basis Sym5}, that is, $(f_t)_1,(f_t)_2,(f_t)_3,(f_t)_4,(f_t)_5,(f_t)_6$ are the coefficients of $u^5,u^4v,u^3v^2,u^2v^3,uv^4,v^5$ of 
$F_t(Y_0=xu+x'v,Y_1=yu+y'v,Y_2=zu+z'v,Y_3=u,Y_4=v)$. Recall that $F_t=F+t^5G+\ldots$. In \eqref{eq: mult 5 equations}, $g_1,\ldots,g_6$ are the coefficients of $u^5,u^4v,u^3v^2,u^2v^3,uv^4,v^5$ of $G(Y_0=xu+x'v,Y_1=yu+y'v,Y_2=zu+z'v,Y_3=u,Y_4=v)$.

\begin{proposition}
\label{prop: lt for mult 5}
Assume $\operatorname{char}k\neq 2,5$.
Let $a=-\frac{g_3(0)}{10}$ and $b=-\frac{g_4(0)}{10}$. For a general deformation we have $ab\neq 0$ and $L:=\frac{k[w]}{(w^5-ab)}$ is a finite \'{e}tale algebra over $k$ with $\dim_kL=5$. Then there
are 5 solutions of the form $l_t=(x_t,x_t',y_t,y_t',z_t,z_t')\in (L[[t]])^6$ with
\begin{align}
\label{eq: mult 5 deformations}
\begin{split}
x_t&=x_1t+x_2t^2+\dots\\
x_t'&=x_1't+x_2't^2+\dots\\
y_t&=y_1t+y_2t^2+\dots\\
y_t'&=y_1't+y_2't^2+\dots\\
z_t&=z_1t+z_2t^2+\dots\\
z_t'&=z_1't+z_2't^2+\dots\\
\end{split}
\end{align}
to $f(l_t)=0$.
In particular, $x_0=x_0'=y_0=y_0'=z_0=z_0'=0$. That means, 0 (and thus the line $l_0$) deforms with multiplicity 5.
\end{proposition}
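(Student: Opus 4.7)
The plan is to substitute the ansatz \eqref{eq: mult 5 deformations} into the system \eqref{eq: mult 5 equations} and solve for the power series coefficients order by order in $t$. The structural observation that drives the proof is that equations $(f_t)_1, (f_t)_2, (f_t)_5, (f_t)_6$ each contain a single linear monomial ($5x, 5x', 5y, 5y'$ respectively, highlighted in yellow), which will allow us to solve for $x_n, x_n', y_n, y_n'$ at every order, while $(f_t)_3, (f_t)_4$ contain no linear monomial at all; under the ansatz their dominant contributions come from the quintic terms $10z^3z'^2$ and $10z^2z'^3$ (highlighted in green), which will control $z_n, z_n'$.

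At orders $t^1$ through $t^4$ the four linear equations force $x_k = x_k' = y_k = y_k' = 0$ (using $\operatorname{char} k \neq 5$), and the cubic equations $(f_t)_3, (f_t)_4$ are automatically satisfied because each of their monomials either contains a factor of $x, x', y, y'$ or is a $z$-only monomial of total degree at least $5$. The decisive calculation is at order $t^5$: evaluating $(f_t)_3, (f_t)_4$ gives
\[
z_1^3 (z_1')^2 = a, \qquad z_1^2 (z_1')^3 = b.
\]
For a general deformation $g_3(0), g_4(0)$ are nonzero, so $a, b \neq 0$; multiplying these two relations yields the cleaner identity $(z_1 z_1')^5 = ab$. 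This identifies the natural étale $k$-algebra as $L = k[w]/(w^5 - ab)$, which has dimension $5$ over $k$ and is étale because $\operatorname{char} k \neq 5$ together with $ab \in k^\times$ makes $w^5 - ab$ separable.

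With $L$ in hand, I will verify directly that $z_1 := w^3/b$ and $z_1' := w^3/a$ in $L$ satisfy both order-$t^5$ relations: using $w^5 = ab$, $z_1 z_1' = w^6/(ab) = w$, then $z_1^3 (z_1')^2 = w^{15}/(a^2 b^3) = (ab)^3/(a^2 b^3) = a$ and analogously $z_1^2 (z_1')^3 = b$. Simultaneously the four linear equations at order $t^5$ determine $x_5, x_5', y_5, y_5' \in L$ uniquely from their yellow linear terms. For $n \geq 6$ I continue by induction: at order $t^n$ the new unknowns $x_n, x_n', y_n, y_n', z_{n-4}, z_{n-4}'$ all enter linearly, with the four linear equations contributing an invertible $5 I_4$ and the two cubic equations contributing the Jacobian
\[
10 \begin{pmatrix} 3 z_1^2 (z_1')^2 & 2 z_1^3 z_1' \\ 2 z_1 (z_1')^3 & 3 z_1^2 (z_1')^2 \end{pmatrix},
\]
whose determinant is $500 (z_1 z_1')^4 = 500\, w^4$. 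This is a unit in $L$ since $\operatorname{char} k \neq 2, 5$ and $w$ is a unit (as $w^5 = ab \neq 0$). The induction therefore closes and produces a unique formal solution $l_t \in L[[t]]^6$; the five associated geometric deformations are recovered by base-changing to $\bar{k}$ and specializing at each of the five maximal ideals of $L \otimes_k \bar{k}$.

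The main obstacle is conceptual rather than computational: one must recognize that the correct étale algebra is generated by the product $z_1 z_1'$ with relation $(z_1 z_1')^5 = ab$, rather than by the relation $(z_1')^5 = b^3/a^2$ that one would naively obtain by eliminating $z_1$ via $z_1 = (a/b) z_1'$. Once the explicit formulas $z_1 = w^3/b$ and $z_1' = w^3/a$ are written down, the remainder is routine Hensel-style induction whose only inputs are $\operatorname{char} k \neq 2, 5$ and the genericity of the deformation ensuring $ab \neq 0$.
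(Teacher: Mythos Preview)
Your proof is correct and follows essentially the same order-by-order approach as the paper: use the linear terms in $(f_t)_1,(f_t)_2,(f_t)_5,(f_t)_6$ to kill $x_k,x_k',y_k,y_k'$ for $k\le 4$ and then solve for them uniquely at higher order, extract $z_1,z_1'$ from the $t^5$-terms of $(f_t)_3,(f_t)_4$, and close with a linear induction whose $2\times2$ $z$-block has determinant $500(z_1z_1')^4\in L^\times$. The only cosmetic differences are that the paper \emph{divides} the two $t^5$-relations to get $z_1'/z_1=b/a$ and $z_1^5=a^3/b^2$, whereas you \emph{multiply} them to obtain $(z_1z_1')^5=ab$ and then write down the explicit generators $z_1=w^3/b$, $z_1'=w^3/a$ in $L=k[w]/(w^5-ab)$ (a slightly cleaner packaging that makes the unit $500w^4$ transparent); and the paper appends an appeal to Artin approximation for algebraicity, which is not required for the formal power series assertion in the proposition itself. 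One small point of phrasing: the $6\times6$ linear system at order $t^n$ is block \emph{triangular} rather than block diagonal (the $z^5,\,5z^4z',\ldots$ monomials in $(f_t)_1,(f_t)_2,(f_t)_5,(f_t)_6$ do feed $z_{n-4},z_{n-4}'$ into those four equations), but since $(f_t)_3,(f_t)_4$ involve only $z_{n-4},z_{n-4}'$ among the new unknowns, the determinant is still $5^4\cdot 500\,w^4$ and your conclusion stands.
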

\begin{remark}
With the notation from Proposition \ref{prop: lt for mult 5}, $a$ and $b$ both not being zero is the condition in \cite[Proposition 2.3]{MR1024767} that $l_0$ deforms.
\end{remark}
\begin{proof}[Proof of Proposition \ref{prop: lt for mult 5}]
Albano and Katz show that $l_0$ deforms to 5 lines over the complex numbers in \cite[Propoisition 2.3 and 2.4]{MR1024767}. 
We redo their proof in our chosen coordinates to see that it remains true over a field $k$ with $\operatorname{char}k\neq 2,5$.

The $t$-term of $(f_t)_1(l_t)=0$ is equal to $5x_1=0$. Since $\operatorname{char} k\neq 5$, it follows that $x_1=0$. The $t^2$-term in $(f_t)_1(l_t)=0$ is equal to $-10x_1^2+5x_2=5x_2=0$ and we get $x_2=0$. Similarly, the $t^3$ and $t^4$-term in $(f_t)_1(l_t)=0$ imply that $x_3=0$ and $x_4=0$, respectively, and the $t^i$-terms for $i=1,2,3,4$ in $(f_t)_2(l_t)=0$, $(f_t)_5(l_t)=0$ and $(f_t)_6(l_t)=0$, yield that $x_1'=x_2'=x_3'=x_4'=y_1=y_2=y_3=y_4=y_1'=y_2'=y_3'=y_4'=0$.

Setting the $t^5$-terms in $(f_t)_3(l_t)$ and $(f_t)_4(l_t)$ equal to zero gives 
\begin{equation}
\label{eq: f3 t5 term}
10z_1^3z_1'^2+g_3(0)=0
\end{equation}
and 
\begin{equation}
\label{eq: f4 t5 term}
10z_1^2z_1'^3+g_4(0)=0.
\end{equation}
For a general deformation $g_3(0)=-10a$ and $g_4(0)=-10 b$ are both not equal to zero. 
Dividing \eqref{eq: f4 t5 term} by \eqref{eq: f3 t5 term} yields
$\frac{z_1'}{z_1}=\frac{b}{a}$ and thus $z_1'=z_1\frac{b}{a}$.
We get 5 solutions for $(z_1,z_1')\in L^2$
corresponding to the 5 solutions of 
\[z_1^3(\frac{b}{a}z_1)^2=\frac{b^2}{a^2}z_1^5=a\Leftrightarrow z_1^5=\frac{a^3}{b^2}.\]

The $t^5$-term in $(f_t)_1(l_t)=0$ is $5x_5+A_1^{(n)}=0$ for a polynomial $A_1^{(n)}$ in $x_1,\dots,x_4,y_1,z_1$ which we have already solved for. So there is one solution for $x_5$ depending on $x_1,\dots,x_4,y_1,z_1$. Similary the $t^5$-terms of $(f_t)_2(l_t)=0$, $(f_t)_5(l_t)=0$ and $(f_t)_6(l_t)=0$ yield unique $x_5'$, $y_5$ and $y_5'$, respectively. 

We show that we can solve for the remaining terms in \eqref{eq: mult 5 deformations} uniquely by induction.
For $n\ge 6$ we show that the $t^n$-term in $f(l_t)=0$ yields unique $x_n,x_n',y_n,y_n',z_{n-4},z'_{n-4}$ assuming that we have already solved for $x_1,\dots,x_{n-1},x_1',\dots,x_{n-1}',y_1,\dots,y_{n-1},y_1',\dots,y_{n-1}',z_1,\dots,z_{n-5},z_1',\dots,z_{n-5}'$.

The $t^n$-term in $(f_t)_1(l_t)$ is equal to $5x_n+A^{(n)}_1=0$ where $A_1^{(n)}$ is a polynomial in \\$x_1,\dots,x_{n-1},x_1',\dots,x_{n-1}',y_1,\dots,y_{n-1},y_1',\dots,y_{n-1}',z_1,\dots,z_{n-5},z_1',\dots,z_{n-5}'$. Hence, there is a unique solution for $x_n$. Similarly, the $t^n$-terms in $(f_t)_2(l_t)=0$, $(f_t)_5(l_t)=0$ and $(f_t)_6(l_t)=0$ determine unique $x_n'$, $y_n$ and $y_n'$, respectively.

The $t^n$-term in $(f_t)_3(l_t)=0$ is $30z_1^2z_{n-4}z_1'^2+20z_1^3z_1'z_{n-4}'+A_3^{(n)}=0$
and the $t^n$-term in $(f_t)_4(l_t)=0$ is $20z_1z_{n-4}z_1'^3+30z_1^2z_1'^2z_{n-4}'+A_4^{(n)}=0$
for polynomials $A^{(n)}_3$ and $A^{(n)}_4$ in \\
$x_1,\dots,x_{n-1},x_1',\dots,x_{n-1}',y_1,\dots,y_{n-1},y_1',\dots,y_{n-1}',z_1,\dots,z_{n-5},z_1',\dots,z_{n-5}'$.
So we got 2 linear equations in $z_{n-4}$ and $z_{n-4}'$ (even in characteristic 3) and get unique solutions for $z_{n-4}$ and $z_{n-4}'$.

By Artin's approximation theorem \cite{MR232018}, the $l_t$ are algebraic.
\end{proof}
In the proof we computed the following coefficients of $l_t$.
\begin{align}
\label{eq: computation of lt mult 5}
\begin{split}
&x_1=x_2=x_3=x_4=0\text{, }
x_1'=x_2'=x_3'=x_4'=0\text{, }\\
&y_1=y_2=y_3=y_4=0\text{, }
y_1'=y_2'=y_3'=y_4'=0\\
&\text{and }z_1=\sqrt[5]{\frac{a^3}{b^2}}\text{, }
z_1'= z_1\frac{b}{a} =\sqrt[5]{\frac{b^3}{a^2}}
\end{split}
\end{align}

For general deformations, the $l_t$'s in Proposition \ref{prop: lt for mult 5} are pairwise different and simple lines on $X_t$ and the coordinate ring of the closed subscheme of the 5 deformations in $\Gr(2,5)_{k((t))}$ is $L((t))$ which is a finite 5-dimensional \'{e}tale algebra over $k((t))$. So the contribution of the $l_t$ to \eqref{eq: Euler number of deformation of Fermat} is equal to $\Tr_{L((t))/k((t))}(\langle Jf(l_t)\rangle)\in \GW(k((t)))$ by Remark \ref{remark: etale algebra} where $Jf=\det (\operatorname{jacobian} f)$. 


We compute the lowest term of $Jf(l_t)$ which completely determines $\langle Jf(l_t)\rangle\in \GW(L((t)))$ by Claim \ref{claim: structure of GW(k((t)))}. We highlight the relevant terms of the jacobian of $((f_t)_1,\dots,(f_t)_6)$ evaluated at the $l_t$ which contribute to the lowest term of its determinant.

\[
\begin{pmatrix}
\mathcolorbox{yellow}{5}+t^5(\dots) & t^5(\dots) & t^5(\dots) & t^5(\dots) &5z_1^4t^4+t^5(\dots) & t^5(\dots)\\
t^5(\dots) & \mathcolorbox{yellow}{5}+t^5(\dots) & t^5(\dots) & t^5(\dots) & 20z_1^3z_1't^4+t^5(\dots) &5z_1^4t^4+t^5(\dots)\\
t^5(\dots)& t^5(\dots) & t^5(\dots) & t^5(\dots)& \mathcolorbox{green}{30z_1^2z_1'^2t^4}+t^5(\dots) & \mathcolorbox{green}{20z_1^3z_1't^4}+t^5(\dots)\\
t^5(\dots)& t^5(\dots) & t^5(\dots) & t^5(\dots) & \mathcolorbox{green}{20z_1z_1'^3t^4}+t^5(\dots) &\mathcolorbox{green}{30z_1^2z_1'^2t^4}+t^5(\dots)\\
t^5(\dots)& t^5(\dots) &\mathcolorbox{yellow}{5}+t^5(\dots)  &t^5(\dots) & 5z_1'^4t^4+t^5(\dots)&20z_1z_1'^3t^4+t^5(\dots)\\
t^5(\dots)& t^5(\dots) & t^5(\dots) &  \mathcolorbox{yellow}{5}+t^5(\dots) & t^5(\dots) &5z_1'^4t^4+t^5(\dots) 
\end{pmatrix}
\]

\normalsize

The lowest term of $\det( \operatorname{jacobian} ((f_t)_1,\dots,(f_t)_6))(l_t)$ is $\mathcolorbox{yellow}{5^4} \cdot \mathcolorbox{green}{t^8\cdot z_1^4\cdot z_1'^4\cdot(900-400)}$ and thus 
\[\langle Jf(l_t)\rangle=\langle 500\cdot 5^4  t^8 z_1^4z_1'^4\rangle =\langle 5\rangle \in \GW(L((t))).\]
Hence, we get the following contribution of the 5 lines to \eqref{eq: Euler number of deformation of Fermat} 
\begin{equation}
\label{eq: contribution mult 5 lines}
\Tr_{L((t))/k((t))}(\langle Jf(l_t)\rangle)=\Tr_{L((t))/k((t))}(\langle 5\rangle)\in \GW(k((t))).
\end{equation}

\begin{lemma}
\label{lemma:zeta5}
Let $F$ be a field of characteristic not equal to $2$ or $5$ and let $R$ be an $F$-algebra.
Let $A=\frac{R[w]}{(w^5-\alpha)}$ for some $\alpha\in R^{\times}$. Then $A$ is a free $R$-module of rank $5$ and for a unit $r\in R^{\times}$ we have $\Tr_{A/R}(\langle r\rangle)=2(\langle1\rangle+\langle-1\rangle)+\langle5r\rangle$.
\end{lemma}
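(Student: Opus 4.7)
\medskip

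The plan is a direct computation in the basis $1, w, w^2, w^3, w^4$. First, since $w^5 - \alpha$ is monic of degree $5$, the quotient $A = R[w]/(w^5-\alpha)$ is a free $R$-module of rank $5$ with this basis, so the first assertion is immediate. To compute the trace form, I will first record the traces of the basis monomials. For $k \ge 0$, write $k = 5q + i$ with $0 \le i < 5$; then $w^k = \alpha^{q} w^{i}$ in $A$, so $\Tr_{A/R}(rw^k) = r\alpha^{q}\Tr_{A/R}(w^i)$. Reading off the matrix of multiplication by $w^i$ in the basis above, one sees $\Tr_{A/R}(w^0) = 5$ and $\Tr_{A/R}(w^i) = 0$ for $i = 1, 2, 3, 4$.

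Second, the matrix of the bilinear form $(x,y) \mapsto \Tr_{A/R}(rxy)$ in the basis $1, w, \ldots, w^4$ has $(i,j)$-entry $\Tr_{A/R}(rw^{i+j})$. By the computation above, this entry is $5r$ when $i+j = 0$, equals $5r\alpha$ when $i+j=5$, and vanishes otherwise (since $i+j \le 8$ gives $i+j \in \{1,\dots,4\}\cup\{5\}\cup\{6,7,8\}$, and in the last three cases $w^{i+j}$ is a unit times $w^{1}$, $w^{2}$, or $w^{3}$). Hence the matrix is block-diagonal with a $1\times 1$ block $(5r)$ and a $4 \times 4$ anti-diagonal block with every anti-diagonal entry equal to $5r\alpha$.

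Third, I will diagonalize the $4 \times 4$ block. It splits as an orthogonal sum of two $2 \times 2$ hyperbolic blocks $\begin{pmatrix} 0 & 5r\alpha \\ 5r\alpha & 0 \end{pmatrix}$ (pair the first basis vector with the fourth, and the second with the third). Using the standard change of basis $u = e_1+e_2,\ v = e_1 - e_2$ and the fact that $\mathrm{char}\,F \ne 2$ so $2$ is a unit, such a $2\times 2$ block diagonalizes to $\langle 2\cdot 5r\alpha\rangle + \langle -2\cdot 5r\alpha\rangle$, which equals $\langle c\rangle\HH = \HH = \langle 1\rangle + \langle -1\rangle$ in $\GW(R)$ because $\langle c\rangle\HH = \HH$ for any unit $c$. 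Thus the $4 \times 4$ block contributes $2(\langle 1\rangle + \langle -1\rangle)$, and adding the $1\times 1$ block $\langle 5r\rangle$ gives the claimed identity.

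The only mildly subtle point is the identity $\langle c\rangle \HH = \HH$ over the (possibly non-field) ring $R$, which is a standard fact in the Grothendieck--Witt group of a commutative ring in which $2$ is invertible. Aside from that, the proof is a direct bookkeeping exercise; no deformation theory or geometric input is needed.
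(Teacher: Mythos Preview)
Your proof is correct and follows essentially the same approach as the paper: compute the Gram matrix of the trace form in the basis $1,w,\dots,w^4$, split off the $1\times 1$ block $\langle 5r\rangle$, and recognize the remaining $4\times 4$ anti-diagonal block as two hyperbolic planes. The only cosmetic difference is that the paper writes down an explicit change-of-basis matrix taking $\begin{pmatrix}0&5r\alpha\\5r\alpha&0\end{pmatrix}$ directly to $\operatorname{diag}(1,-1)$, whereas you first diagonalize and then invoke the identity $\langle c\rangle\HH=\HH$; both are valid since $2$ and $5r\alpha$ are units in $R$.
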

\begin{proof}
The following is a basis for $A$ as an $R$-module: $1,w,w^2,w^3,w^4$. 
Let $c=c_1+c_2w+c_3w^2+c_4w^3+c_5w^4\in A$ and $d=d_1+d_2w+d_3w^2+d_4w^3+d_5w^4\in A$ for $c_1,\dots,c_5,d_1,\dots,d_5\in R$. Then
\begin{align*}
r\cdot cd&=r(c_1d_1+\alpha(c_2d_5+c_3d_4+c_4d_3+c_5d_2))\\
&+rw(c_1d_2+c_2d_1+\alpha(c_3d_5+c_4d_4+c_5d_3))\\
&+rw^2(c_1d_3+c_2d_2+c_3d_1+\alpha(c_4d_5+c_5d_4))\\
&+rw^3(c_1d_4+c_2d_3+c_3d_2+c_4d_1+\alpha c_5d_5)\\
&+rw^4(c_1d_5+c_2d_4+c_3d_3+c_4d_2+c_5d_1)
\end{align*}
and 
\[\Tr_{A/R}(rcd)=5r(c_1d_1+\alpha(c_2d_5+c_3d_4+c_4d_3+c_5d_2)).\]
Since
\[\begin{pmatrix}\frac{1}{10r\alpha}& 1\\ -\frac{1}{10r\alpha}& 1\end{pmatrix}\cdot \begin{pmatrix}0& 5r\alpha\\ 5r\alpha& 0\end{pmatrix}\cdot\begin{pmatrix}\frac{1}{10r\alpha}& -\frac{1}{10r\alpha}\\1& 1\end{pmatrix}=\begin{pmatrix}1& 0\\0& -1\end{pmatrix}\]
the class of $\Tr_{A/R}(rcd)$ in $\GW(R)$  equals $\langle 5r\rangle+2(\langle1\rangle+\langle-1\rangle).$
\end{proof}
It follows that
\begin{equation}
\label{eq: contribution single mult 5 line}
\Tr_{L((t))/k((t))}(\langle 5\rangle)\in \GW(k((t)))=2(\langle1\rangle+\langle-1\rangle)+\langle5\cdot 5\rangle=2(\langle1\rangle+\langle-1\rangle)+\langle1\rangle.\end{equation}

We now want to compute the contribution of all lines from Proposition \ref{prop: lt for mult 5} to \eqref{eq: Euler number of deformation of Fermat}.
Recall that $\sigma_F^{-1}(0)$ is the union of $50$ irreducible components and note that the union of lines in the intersection of two irreducible components are the lines in
\[\bigcup_{i,j,m,n} X\cap V(X_i^5+X_j^5)\cap V(X_m^5+X_n^5)\]
where the union is over all pairwise different $i,j,m,n\in\{0,1,2,3,4\}$.

Fix $i,j,m,n \in \{0,1,2,3,4\}$ pairwise different. The closed subscheme of lines in $ X\cap V(X_i^5+X_j^5)\cap V(X_m^5+X_n^5)$ of the Grassmannian $\Gr(2,5)$ has coordinate ring isomorphic to $E:=\frac{k[x,y]}{(x^5-1,y^5-1)}$.
There are $15=\frac{\binom{5}{2}\binom{3}{2}}{2}$ choices for $i,j,m,n\in \{0,1,2,3,4\}$.
It follows that the local contribution of all the lines  of two irreducible components to \eqref{eq: Euler number of deformation of Fermat} is 
\[15\cdot \Tr_{E((t))/k((t))}(2(\langle1\rangle+\langle-1\rangle)+\langle1\rangle).\]
Applying Lemma \ref{lemma:zeta5} two more times we get that the sum of the contributions of the lines in Proposition \ref{prop: lt for mult 5} to \eqref{eq: Euler number of deformation of Fermat} is
\begin{equation}
\label{eq: all mult 5 lines}
15\cdot \Tr_{E((t))/k((t))}( 2\cdot (\langle1\rangle+\langle-1\rangle)+\langle1\rangle)=15(25\cdot 2\cdot \HH+12\cdot \HH+\langle 1\rangle)=930\cdot \HH+\langle1\rangle \in \GW(k((t))).
\end{equation}


\subsubsection{Using the local analytic structure}
We want to present a different approach to finding the contribution of the multiplicity 5 lines.
Clemens and Kley find that the \emph{local analytic structure} at the crossings (that is at the intersection of 2 components) is $\frac{\C[z,z']}{(z^3z'^2,z^2z'^3)}$ \cite[Example 4.2]{Clemens1998CountingCW}.
That means that the local ring of $\sigma_F^{-1}(0)$ at a multiplicity 5 line is isomorphic to $\frac{\C[z,z']_{(z,z')}}{(z^3z'^2,z^2z'^3)}$ when $k=\C$. 

Let $(f_1,\ldots,f_6)$ be the coefficients of $u^5,u^4v,u^3v^2,u^2v^3,uv^4,v^5$ of $F(Y_0=xu+x'v,Y_1=yu+y'v,Y_2=zu+z'v,Y_3=u,Y_4=v)$, that is, 
\begin{align*}
f_1=&x^5+y^5+z^5-5x^4+10x^3-10x^2+5x\\
f_2=&5x^4x'+5y^4y'+5z^4z'-5y^4-20x^3x'+30x^2x'-20xx'+5\\
f_3=&10x^3x'^2+10y^3y'^2+10z^3z'^2-30x^2x'^2-20y^3y'+10y^3+30xx'^2-10x'^2\\
f_4=&10x^2x'^3+10y^2y'^3+10z^2z'^3-20xx'^3-30y^2y'^2+10x'^3+30y^2y'-10y^2\\
f_5=&5xx'^4+5yy'^4+5zz'^4-5x'^4-20yy'^3+30yy'^2-20yy'+5y\\
f_6=&x'^5+y'^5+z'^5-5y'^4+10y'^3-10y'^2+5y'
\end{align*}
Observe that $(f_1,f_2,f_5,f_6)$ is a regular sequence, and setting $x=0$, $x'=0$, $y=0$ and $y'=0$ in $f_3$ and $f_4$ yields $10z^3z'^2$ and $10z^2z'^3$. Dividing both polynomials by 10, we get Clemens and Kley's local structure.

When we deform $z^3z'^2$ and $z^2z'^3$ to $z^3z'^2+t^5g_3$ and $z^2z'^3+t^5g_4$, 0 deforms with multiplicity 5. Let $a=-g_3(0)$ and $b=-g_4(0)$. Then the deformations $(z_t,z'_t)$ of 0 are defined over $L((t))$ where $L=\frac{k[w]}{(w^5-ab)}$ and the local $\A^1$-degree at the deformations of zero is

\begin{align*}
&\deg^{\A^1}_{(z_t,z_t')}(z^3z'^2+t^5g_3,z^2z'^3+t^5g_4)\\
=&\Tr_{L((t))/k((t))}(\langle\det \begin{pmatrix}
3z^2z'^2+t^5\frac{\partial g_3}{\partial z} & 2zz'^3+t^5\frac{\partial g_4}{\partial z}\\
2z^3z'+t^5\frac{\partial g_3}{\partial z'} & 3z^2z'^2+t^5\frac{\partial g_4}{\partial z'}
\end{pmatrix}(z_t,z_t')\rangle )\\
=&\Tr_{L((t))/k((t))}\langle5t^4z_1^4z_1'^4\rangle=\Tr_{L((t))/k((t))}\langle 5\rangle=2\HH+\langle1\rangle\in \GW(k((t))).
\end{align*}

So we get the same contribution as in \eqref{eq: contribution single mult 5 line}. However, this does not reprove what has been done above. To find the local $\A^1$-degree it does not suffice to remember the isomorphism class of the local ring. We need a presentation. That means, the order of the polynomials generating the ideal and the coefficients must not be forgotten. It works in this case because the product of the coefficients of the highlighted terms in \eqref{eq: mult 5 equations} is a square.

\subsection{Multiplicity 2 lines}
Let $l_0$ be one of the lines described in Proposition \ref{prop: lines mult 2} on one of the components $W$ of $\sigma_F^{-1}(0)$. Let $L_0$ be the field of definition of $l_0$.
We introduce $\Z/2$ monodromy and consider the deformation $F_t=F+t^2G+t^{4}H+\dots$.

Since $l_0$ does not lie in one of the intersections $W\cap W_i$ (with $W_i\neq W$),  we can assume that $l_0=\{(u:-u:v:av:bv)\}\subset \PP^5$ with $1+a^5+b^5=0$ and $ab\neq 0$. 

Again we perform a coordinate change such that $l_0=(0:0:0:u:v)$ using coordinates $Y_0=X_0+X_1$, $Y_1=X_3-aX_2$, $Y_2=X_4-bX_2$, $Y_3=X_0$ and $Y_4=X_2$ on $\PP^4$. In the new coordinates \[F=Y_3^5+(Y_0-Y_3)^5+Y_4^5+(Y_1+aY_4)^5+(Y_2+bY_4)^5.\]

We choose the same local coordinates as in \ref{subsection: mult 5 lines} around $l_0\otimes L_0((t))$. Then the section $\sigma_{F_t}$ of $\mathcal{E}_{L_0((t))}$ is
locally given by $f=((f_t)_1,\dots,(f_t)_6):\A^6_{L_0((t))}\rightarrow \A^6_{L_0((t))}$ with
\normalsize
\begin{align*}
(f_t)_1=&x^5+y^5+z^5-5x^4+10x^3-10x^2+\mathcolorbox{yellow}{5x}+t^2g_1+\dots\\
(f_t)_2=&5x^4x'+5y^4y'+5z^4z'+5ay^4+5bz^4-20x^3x'+30x^2x'-20xx'+\mathcolorbox{yellow}{5x'}+t^2g_2+\dots\\
(f_t)_3=&10x^3x'^2+10y^3y'^2+10z^3z'^2-30x^2x'^2+20ay^3y'+20bz^3z'+10a^2y^3+10b^2z^3\\
&+30xx'^2-10x'^2+\mathcolorbox{orange}{t^2g_3}+\dots\\
(f_t)_4=&10x^2x'^3+10y^2y'^3+10z^2z'^3-20xx'^3+30ay^2y'^2+30bz^2z'^2+10x'^3+30a^2y^2y'\\
&+30b^2z^2z'+\mathcolorbox{green}{10a^3y^2}+
\mathcolorbox{green}{10b^3z^2}+t^2g_4+\dots\\
(f_t)_5=&5xx'^4+5yy'^4+5zz'^4-5x'^4+20ayy'^3+20bzz'^3+30a^2yy'^2+30b^2zz'^2+20a^3yy'\\
&+20b^3zz'+\mathcolorbox{pink}{5a^4y}+\mathcolorbox{pink}{5b^4z}+t^2g_5+\dots\\
(f_t)_6=&x'^5+y'^5+z'^5+5ay'^4+5bz'^4+10a^2y'^3+10b^2z'^3+10a^3y'^2\\
&+10b^3z'^2+\mathcolorbox{pink}{5a^4y'}+\mathcolorbox{pink}{5b^4z'}+t^2g_6+\dots
\end{align*}
\normalsize
\begin{proposition}
\label{prop: lt for mult 2}
Assume $\operatorname{char}k\neq 2,5$ and let $d:=-\frac{g_4(0)}{10}$. 
For a general deformation we have $d\neq 0$ and there are 2 solutions $l_t=(x_t,x_t',y_t,y_t',z_t,z_t')\in \left(\frac{L_0[w]}{(w^2+abd)}[[t]]\right)^6$ to $f(l_t)=0$ of the form 
\begin{align*}
x_t&=x_1t+x_2t^2+\dots\\
x_t'&=x_1't+x_2't^2+\dots\\
y_t&=y_1t+y_2t^2+\dots\\
y_t'&=y_1't+y_2't^2+\dots\\
z_t&=z_1t+z_2t^2+\dots\\
z_t'&=z_1't+z_2't^2+\dots\\
\end{align*}
\end{proposition}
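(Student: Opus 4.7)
The plan is to mimic the proof of Proposition \ref{prop: lt for mult 5}, matching coefficients of $l_t$ order by order in $t$. At order $t^1$, the equations $(f_t)_1(l_t)=0$ and $(f_t)_2(l_t)=0$ immediately give $5x_1 = 0$ and $5x'_1 = 0$, hence $x_1 = x'_1 = 0$ (using $\operatorname{char}k\neq 5$); the yellow linear terms $5x, 5x'$ play the same role as in the multiplicity $5$ case. The equations $(f_t)_5(l_t)=0$ and $(f_t)_6(l_t)=0$ at order $t$ reduce, via the pink linear terms $5a^4y+5b^4z$ and $5a^4y'+5b^4z'$, to $z_1 = -\frac{a^4}{b^4}y_1$ and $z'_1 = -\frac{a^4}{b^4}y'_1$ (since $b\neq 0$). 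The equations $(f_t)_3$ and $(f_t)_4$ impose no constraint at order $t$.

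The key step is extracting $y_1$ from the $t^2$-coefficient of $(f_t)_4(l_t)=0$. Because $x_1=x'_1=0$, every monomial in $(x,x',y,y',z,z')$ of total degree $\geq 3$ appearing in $f_4$ contributes nothing at order $t^2$, and only the green quadratic terms $10a^3y^2+10b^3z^2$ together with $t^2g_4(0)$ survive, giving $10a^3y_1^2+10b^3z_1^2+g_4(0)=0$. Substituting the relation $z_1=-\frac{a^4}{b^4}y_1$ and using the identity $1+a^5+b^5=0$ (which expresses $l_0\subset X$) simplifies this to $y_1^2 = -\frac{db^5}{a^3}$. Setting $y_1 = \frac{b^2}{a^2}w$ transforms the equation into $w^2 + abd = 0$, so $y_1$ is defined over the degree $2$ algebra $\frac{L_0[w]}{(w^2+abd)}$, which is étale precisely when $abd\neq 0$ --- a generic condition on the deformation (matching Albano--Katz's $d\neq 0$ hypothesis). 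The two square roots of $-abd$ give the two deformations $l_t$.

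For the inductive step at order $t^n$ with $n\geq 2$, the $t^n$-coefficients of $(f_t)_1$ and $(f_t)_2$ determine $x_n$ and $x'_n$ uniquely in terms of previously computed coefficients (the pivot $5$ is invertible), and the $t^n$-coefficients of $(f_t)_5$ and $(f_t)_6$ give linear equations that uniquely determine $z_n$ in terms of $y_n$ and $z'_n$ in terms of $y'_n$ (the pivots $5a^4, 5b^4$ being invertible). The remaining unknowns $y_n, y'_n$ are fixed by the $t^{n+1}$-coefficients of $(f_t)_3$ and $t^{n+2}$-coefficients of $(f_t)_4$, which --- after substituting the relations just obtained --- become linear in the unknowns with leading coefficient a unit multiple of $y_1$ (hence a unit in $\frac{L_0[w]}{(w^2+abd)}[[t]]$). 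Artin's approximation theorem \cite{MR232018} finally promotes the formal solutions to algebraic ones.

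The main obstacle will be the careful bookkeeping in the induction step: at each level the invertibility of the relevant Jacobian-like sub-matrix reduces to showing that an explicit polynomial in $a,b,y_1$ is a unit in our coefficient ring. This in turn uses $ab\neq 0$, $y_1^2 = -abd \cdot (b^2/a^2)^2$ being a unit, and $\operatorname{char}k\neq 2,5$, paralleling the role played by $5^4 \cdot (900-400) \neq 0$ in the multiplicity $5$ analysis.
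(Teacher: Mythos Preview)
Your outline captures the opening moves correctly (extracting $x_1=x_1'=0$, the relations $a^4y_1+b^4z_1=0$ and $a^4y_1'+b^4z_1'=0$, and the quadratic equation for $y_1$ from the $t^2$-term of $(f_t)_4$), but there is a genuine gap: you never examine the $t^2$-coefficient of $(f_t)_3(l_t)$. Since $x_1=x_1'=0$ and all other monomials in $(f_t)_3$ have degree $\ge 3$ in $(y,y',z,z')$, this coefficient reduces to $g_3(0)$, which is \emph{not} automatically zero. In the paper's argument this is precisely the condition $Q(a,b)=0$ that singles out the finitely many lines on the component $W$ that actually deform; if you ignore it, your construction appears to deform every line on $W$, which is false. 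Your induction must start only after this obstruction has been imposed.

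The inductive step is also mis-specified. You claim that $y_n,y_n'$ are determined by the $t^{n+1}$-coefficient of $(f_t)_3$ and the $t^{n+2}$-coefficient of $(f_t)_4$, each with pivot a unit multiple of $y_1$. In fact the roles are reversed and the pivots are different: the pair $(y_{n-1},z_{n-1})$ is fixed by combining the $t^{n}$-term of $(f_t)_4$ (pivot $20a^3y_1$, indeed a multiple of $y_1$) with the linear relation $5a^4y_{n-1}+5b^4z_{n-1}+\dots=0$ from $(f_t)_5$; but the pair $(y'_{n-2},z'_{n-2})$ is fixed by the $t^{n}$-term of $(f_t)_3$, where the only linear contribution in $y',z'$ comes from the \emph{deformation} term $t^2g_3$, with pivots $g_{3,y'}=\tfrac{\partial g_3}{\partial y'}(0)$ and $g_{3,z'}=\tfrac{\partial g_3}{\partial z'}(0)$, combined with $(f_t)_6$. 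The resulting $2\times 2$ system is invertible only under a further genericity hypothesis on $G$ (namely $b^4g_{3,y'}-a^4g_{3,z'}\neq 0$), which your sketch does not mention and which cannot be replaced by the condition $y_1\in L^\times$. So the ``unit multiple of $y_1$'' heuristic, correct for $(f_t)_4$, does not transfer to $(f_t)_3$.
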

\begin{proof}
Again this is done in \cite[Proposition 2.2 and 2.4]{MR1024767} over the complex numbers. 

Setting the coefficients of the $t$-terms in $(f_t)_1(l_t)$ and $(f_t)_2(l_t)$ equal to zero, implies that $x_1=x_1'=0$.
The $t^2$-term in $(f_t)_1(l_t)=0$ and $(f_t)_2(l_t)=0$, gives unique solutions for $x_2$ and $x_2'$, respectively.

Setting the coefficient of the $t$-term of $(f_t)_5(l_t)$ equal to 0, we see that $a^4y_1+b^4z_1=0$. To find $y_1$, we set the $t^2$-term of $(f_t)_4(l_t)$ equal to 0.
\begin{align*}
&0=10a^3y_1^2+10b^3z_1^2+g_4(0)=10a^3y_1^2+10b^3z_1^2-10d\\
\Leftrightarrow &d=a^3y_1^2+b^3\frac{a^8}{b^8}y_1^2=y_1^2\frac{a^3}{b^5}(b^5+a^5)\\
\Leftrightarrow &y_1^2=\frac{-d b^5}{a^3}
\end{align*}
where the last equivalence follows from the equality $a^5+b^5+1=0$.
So there are two solutions $y_1=\pm\frac{b^2}{a}\sqrt{-\frac{db}{a}}$. 

The $t^2$-term in $(f_t)_3(l_t)=0$ is $-10x_1'^2+g_3(0)=g_3(0)=0$. So $l_0$ deforms with the deformation if $g_3(0)$ which is the coefficient of $u^3v^2$ of $G\vert_{l_0}$ and thus a quadratic polynomial $Q$ in $a$ and $b$, is equal to zero.

The $t$-term in $(f_t)_6(l_t)=0$ is a linear equation in $y_1'$ and $z_1'$.

The $t^2$-term in $(f_t)_5(l_t)=0$ is a linear equation in $y_2$ and $z_2$ with coefficients determined by $y_1$, $y_1'$, $z_1$ and $z_1'$, and the $t^2$-term in $(f_t)_6(l_t)=0$ is a linear equation in $y_2'$ and $z_2'$ with coefficients determined by $y_1'$ and $z_1'$.
So the $t^1$ and $t^2$-terms in $f(l_t)=0$ 
\begin{enumerate}
\item determine $x_1,x_2,x_1',x_2',y_1,z_1$,
\item give us 1 linear equation in $y_1'$ and $z_1'$,
\item 1 linear equation in $y_2,z_2$ depending on $y_1'$ and $z_1'$
\item and 1 linear equation in $y_2',z_2'$ depending on $y_1'$ and $z_1'$.
\end{enumerate}
This is the base for the following induction. We assume that the $t^i$-terms of $f(l_t)=0$ for $i=1,\dots,n-1$ determine 
\begin{enumerate}
\item (2 unique solutions for) $x_1,\dots,x_{n-1},x_1',\dots,x_{n-1}'$, \\$y_1,\dots,y_{n-2},y_1',\dots,y_{n-3}',z_1,\dots,z_{n-2},z_1,\dots,z_{n-3}'$,
\item give a 1-dimensional solution space for $y_{n-2}',z_{n-2}'$,
\item a 1-dimensional solution space for $y_{n-1},z_{n-1}$ (depending on $y_{n-2}'$ and $z_{n-2}'$)
\item and a 1-dimensional solution space for $y_{n-1}',z_{n-1}'$ (also depending on $y_{n-2}'$ and $z_{n-2}'$).
\end{enumerate} 
We will show that the $t^n$-term in $f(l_t)=0$ determines
\begin{enumerate}
\item unique $x_n,x_n',y_{n-2}',z_{n-2}',y_{n-1},z_{n-1}$,
\item does not affect the 1-dimensonial solution space for $y_{n-1}',z_{n-1}'$,
\item a 1-dimensional solution space for $y_{n},z_{n}$ (depending on $y_{n-1}'$ and $z_{n-1}'$) 
\item and a 1-dimensional solution space for $y_{n}',z_{n}'$ (also depending on $y_{n-2}'$ and $z_{n-2}'$)
\end{enumerate} 
and thus we have 2 solutions to $f(l_t)=0$ which are algebraic by Artin's approximation theorem \cite{MR232018}.

Let $g_{3,y'}$ be the coefficient of $y'$ and $g_{3,z'}$ be the coefficient of $z'$ in $g_3$.  Setting the $t^n$-term in $(f_t)_3(l_t)$ equal to zero we get $g_{3,y'}y'_{n-2}+g_{3,z'}z'_{n-2}+A^{(n)}_3$ where $A^{(n)}_3$ is a polynomial in  $x_1,\dots,x_{n-1},x_1',\dots,x_{n-1}'$, \\$y_1,\dots,y_{n-2},y_1',\dots,y_{n-3}',z_1,\dots,z_{n-2},z_1,\dots,z_{n-3}'$
which we have already solved for by the induction hypothesis.  Recall that we already have a 1-dimensional solution space for $y'_{n-2}$ and $z'_{n-2}$ determined by a linear equation $5a^4y'_{n-2}+5b^4z'_{n-2}+A^{(n-2)}_6=0$ we get from setting the $t^{n-2}$ in $(f_t)_6(l_t)$ equal to zero (where $A^{(n-2)}_6$ is determined by variables we have already solved for). For a general choice of $G$ in $F_t$ and thus general coefficients $g_{3,y'}$ and $g_{3,z'}$, we get that the two linear equations in $y'_{n-2}$ and $z'_{n-2}$ are linearly independent and thus we get unique solutions for $y'_{n-2}$ and $z'_{n-2}$. 


The $t^n$-term in $(f_t)_1(l_t)=0$ equals $5x_n+A_1^{(n)}=0$ where $A_1^{(n)}$ is a polynomial in $x_1,\ldots,x_{n-1}, x_1',\ldots,x_{n-2}',$ $y_1,\ldots,y_{n-2},y_1',\ldots,y_{n-2}'$, determines $x_n$ uniquely. Similarly, the $t^n$-term in $(f_t)_2(l_t)=0$ determines $x_n'$ uniquely. 

The $t^n$-term of $(f_t)_4(l_t)=0$ is $20a^3y_1y_{n-1}+20b^3z_1z_{n-1}+A^{(n)}_{4}=0$ with $A^{(n)}_{4}$ a polynomial in \\$x_1,\dots,x_{n-1},x_1',\dots,x_{n-1}', y_1,\dots,y_{n-2},y_1',\dots,y_{n-2}',z_1,\dots,z_{n-2},z_1,\dots,z_{n-2}'$.
By the induction hypothesis, we have a $1$-dimensional solution space for $y_{n-1}$ and $z_{n-1}$, namely $5a^4y_{n-1}+5b^4z_{n-1}+A^{(n-2)}_{5}=0$, which we get by setting the $t^{n-1}$-term in $(f_t)_5(l_t)$ equal to zero. Here, $A^{(n-2)}_{5}$ is determined by $x_1,\ldots,x_{n-1},  x_1',\ldots,x_{n-2}', $ $y_1,\ldots,y_{n-2},y_1',\ldots,y_{n-2}'$ which we have already solved for. We claim that these two linear equations in $y_{n-1}$ and $z_{n-1}$ are linearly independent and thus yield unique solutions for $y_{n-1}$ and $z_{n-1}$: If the two equations $20a^3y_1y_{n-1}+20b^3z_1z_{n-1}+A^{(n)}_{4}=0$ and $5a^4y_{n-1}+5b^4z_{n-1}+A^{(n-2)}_{5}=0$ were linearly dependent, then this would imply that $by_1=az_1$. Then $a^4y_1+b^4z_1=0$ would imply that $a^5+b^5=0$ but by hypothesis $a^5+b^5=-1$. 

So we have solved for $y_{n-2}',z_{n-2}',x_n,x_n',y_{n-1},z_{n-1}$ and thus shown 1.

Note that the $1$-dimensional solution space for $y'_{n-1}$ and $z'_{n-1}$ one gets from the $t^{n-1}$-term in $(f_t)_6(l_t)=0$ is not affected by the computations above, this shows 2.

Finally, it is easy to see that the $t^n$-term in $(f_t)_5(l_t)$ and $(f_t)_6(l_t)$ determine linear equations in $y_n,z_n$ and $y_n',z_n'$, respectively, showing 3. and 4.

\end{proof}
In the proof we have calculated that
\begin{align}
\begin{split}
x_1=x_1'=0\\
a^4y_1+b^4z_1=0\text{, } a^4y_1'+b^4z_1'=0\\
y_1= \frac{b^2}{a}\sqrt{-\frac{db}{a}}.
\end{split}
\end{align}
\begin{remark}
\label{remark: general cond mult 2 lines}
As in the proof let $Q(a,b):=g_3(0)$ which is a degree 2 polynomial in $a$ and $b$.
The condition that $l_0$ deforms is $d\neq0$ and $Q(a,b)=0$ in \cite[Proposition 2.2]{MR1024767}.
\end{remark}
Let $E=\frac{k[a,b]}{(1+a^5+b^5,Q(a,b))}$. By the proof of Proposition \ref{prop: lt for mult 5} and Remark \ref{remark: general cond mult 2 lines} the closed subscheme of the 10 multiplicity 2 lines on a component $W$ that deform, has coordinate ring $E=\frac{k[a,b]}{(a^5+b^5+1,Q(a,b))}$. Let $L:=\frac{E[w]}{(w^2+abd)}$. 
Then the contribution of the deformations of the 10 double lines on $W$ to \eqref{eq: Euler number of deformation of Fermat} is
$\Tr_{E((t))/k((t))}(\Tr_{L((t))/E((t))}(\langle Jf(l_t)\rangle))$ where $Jf(l_t)$ is again the determinant of the jacobian of $f$ evaluated at the $l_t$.

Again $\langle Jf(l_t)\rangle$ is determined by the lowest term of $Jf(l_t)$, that is the lowest term of the determinant of the matrix
\normalsize

\[
 \begin{pmatrix}
\mathcolorbox{yellow}{5}+t^2(\dots) & t^2(\dots) & t^2(\dots) & t^2(\dots) &t^2(\dots) & t^2(\dots)\\
 t^2(\dots) & \mathcolorbox{yellow}{5}+t^2(\dots) & t^2(\dots) & t^2(\dots) & t^2(\dots) & t^2(\dots)\\
 t^2(\dots)& t^2(\dots) & t^2(\dots) & \mathcolorbox{orange}{At^2}+t^3(\dots)& t^2(\dots) & \mathcolorbox{orange}{Bt^2}+t^3(\dots)\\
t^2(\dots)& t^2(\dots) & \mathcolorbox{green}{20a^3y_1t}+t^2(\dots) & t^2(\dots) & \mathcolorbox{green}{20b^3z_1t}+t^2(\dots) &t^2(\dots)\\
 t^2(\dots)& t^2(\dots) &\mathcolorbox{pink}{5a^4}+t(\dots)  &20a^3y_1t+t^2(\dots) &\mathcolorbox{pink}{5b^4}+t(\dots)  &20b^3z_1t+t^2(\dots) \\
t^2(\dots)& t^2(\dots) & t^2(\dots) & \mathcolorbox{pink}{5a^4}+t(\dots) & t^2(\dots) &\mathcolorbox{pink}{5b^4}+t(\dots) \\
\end{pmatrix}
\]

\normalsize
where $A:=\frac{\partial g3}{\partial y'}(0)\in E$ and $B:=\frac{\partial g3}{\partial z'}(0)\in E$. The lowest term is equal to
\small
\begin{align*}
&\mathcolorbox{yellow}{5^2}\cdot \left(-\mathcolorbox{orange}{At^2}(\mathcolorbox{green}{20a^3y_1t}\cdot \mathcolorbox{pink}{5b^4}\mathcolorbox{pink}{5b^4}-\mathcolorbox{green}{20b^3z_1t}\cdot \mathcolorbox{pink}{5a^4}\mathcolorbox{pink}{5b^4})-\mathcolorbox{orange}{Bt^2}(-\mathcolorbox{green}{20a^3y_1t}\cdot \mathcolorbox{pink}{5a^4}\mathcolorbox{pink}{5b^4}+\mathcolorbox{green}{20b^3z_1t}\cdot \mathcolorbox{pink}{5a^4}\mathcolorbox{pink}{5a^4})\right)\\
=&25\cdot500\cdot a^3b^3(Ab^4-Ba^4)(z_1a-y_1b)\cdot t^3
\end{align*}
\normalsize
and so 
\normalsize
\begin{align*}
\langle Jf(l_t)\rangle&=\langle 5t ab(az_1-by_1)(b^4A-a^4B)\rangle\\&= \langle -5ty_1ab(a^5+b^5)(b^4A-a^4B)\rangle\overset{1+a^5+b^5=0}{=}
\langle 5tab\cdot(b^4A-a^4B)\frac{b^2}{a}\sqrt{-\frac{db}{a}}\rangle\\&=\langle5t(b^4A-a^4B)b\sqrt{-\frac{db}{a}}\rangle
\end{align*}
\normalsize
in $\GW(L((t)))$. Note that $b^4A-a^4B\neq 0$ for a general deformation $X_t$.
The contribution of the 10 double lines is 
\begin{equation}
\label{eq: contribution of the 10 mult 2 lines}
\Tr_{E((t))/k((t))}(\Tr_{L((t))/E((t)))}(\langle5t(bA-aB)b\sqrt{-\frac{db}{a}}\rangle).
\end{equation}

\begin{lemma}
\label{lemma: quadr ext hyperbolic form}
Let $F$ be a field of characteristic not equal to $2$ and let $R$ be an $F$-algebra. For $\alpha\in R^{\times }$ a unit let $A=\frac{R[w]}{(w^2-\alpha)}$. Then $\Tr_{A/R}(\langle \lambda w\rangle)=\langle1\rangle+\langle -1\rangle$ for any unit $\lambda\in R^{\times}$.
\end{lemma}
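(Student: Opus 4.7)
The plan is to mimic the strategy of Lemma \ref{lemma:zeta5} but in the simpler quadratic setting: write down a free $R$-basis of $A$, compute the Gram matrix of the trace form $\Tr_{A/R}(\langle \lambda w \rangle)$ in that basis, and then exhibit an explicit change of basis that diagonalizes it as $\langle 1 \rangle + \langle -1 \rangle$.

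First I would fix the $R$-basis $(1, w)$ of $A$. For $c = c_1 + c_2 w$ and $d = d_1 + d_2 w$ in $A$, I compute
\[
\lambda w \cdot c d \;=\; \lambda \alpha (c_1 d_2 + c_2 d_1) \;+\; \lambda (c_1 d_1 + \alpha c_2 d_2)\, w,
\]
and I record that multiplication by $a_0 + a_1 w$ has matrix $\begin{pmatrix} a_0 & \alpha a_1 \\ a_1 & a_0 \end{pmatrix}$ in this basis, so its trace over $R$ is $2 a_0$. Therefore
\[
\Tr_{A/R}(\lambda w \cdot cd) \;=\; 2 \lambda \alpha (c_1 d_2 + c_2 d_1),
\]
which is the $R$-bilinear form with Gram matrix
\[
M \;=\; \begin{pmatrix} 0 & 2\lambda\alpha \\ 2\lambda\alpha & 0 \end{pmatrix}.
\]
Since $\operatorname{char} F \neq 2$ and $\lambda, \alpha \in R^\times$, the entry $2\lambda\alpha$ is a unit in $R$, so $M$ is non-degenerate.

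Finally I would exhibit an explicit change of basis sending $M$ to $\operatorname{diag}(1,-1)$, exactly analogous to the matrix computation at the end of the proof of Lemma \ref{lemma:zeta5}. Setting $\mu = 2\lambda\alpha$ and
\[
P \;=\; \begin{pmatrix} 1/(2\mu) & -1/(2\mu) \\ 1 & 1 \end{pmatrix},
\]
a direct check gives $P^{T} M P = \begin{pmatrix} 1 & 0 \\ 0 & -1 \end{pmatrix}$, so $\Tr_{A/R}(\langle \lambda w \rangle) = \langle 1 \rangle + \langle -1 \rangle$ in $\GW(R)$. There is no real obstacle: the only subtle point is that the change of basis must be carried out over the $R$-algebra $A$ rather than over a field, but since $2$, $\lambda$, and $\alpha$ are all units in $R$ the matrix $P$ has entries in $R$ and is invertible over $R$, so the identity holds in $\GW(R)$ as required.
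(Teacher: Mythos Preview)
Your proof is correct and follows essentially the same approach as the paper's: compute the Gram matrix of $\Tr_{A/R}(\langle\lambda w\rangle)$ in the basis $(1,w)$ to get $\begin{pmatrix}0&2\lambda\alpha\\2\lambda\alpha&0\end{pmatrix}$, and then diagonalize it explicitly to $\operatorname{diag}(1,-1)$ using the same change-of-basis matrix (your $P$ is the transpose of the paper's matrix, which amounts to the same congruence). One small wording quibble: the change of basis is over $R$, not ``over the $R$-algebra $A$'' as you write at the end, but your actual argument handles this correctly.
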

\begin{proof}
A basis for $A$ is $1, w$. Let $a=a_1+a_2wb=b_1+b_2w\in A$, then
\[\lambda w ab=\lambda w (a_1b_1+\alpha b_1b_2)+\lambda \alpha(a_1b_2+a_2b_1)\]
and $\Tr_{A/R}(\lambda w ab)=2\lambda \alpha (a_1b_2+a_2b_1)$ which represents $\langle1\rangle+\langle-1\rangle$ in $\GW(R)$ since
\[\begin{pmatrix}\frac{1}{4\lambda\alpha}& 1\\ -\frac{1}{4\lambda\alpha}& 1\end{pmatrix}\cdot \begin{pmatrix}0& 2\lambda\alpha\\ 2\lambda\alpha& 0\end{pmatrix}\cdot\begin{pmatrix}\frac{1}{4\lambda\alpha}& -\frac{1}{4\lambda\alpha}\\1& 1\end{pmatrix}=\begin{pmatrix}1& 0\\0& -1\end{pmatrix}.\]
\end{proof}

\begin{lemma}
\label{lemma: trace of hyperbolic form}
For a finite \'{e}tale algebra $A$ over a field $F$, $\Tr_{A/F}(\langle1\rangle+\langle-1\rangle)=\dim_{F}A\cdot\HH$.
\end{lemma}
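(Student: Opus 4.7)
The plan is to realize $\Tr_{A/F}(\langle 1\rangle+\langle-1\rangle)$ as a non-degenerate symmetric bilinear form on the $F$-vector space $A\oplus A$ of rank $2\dim_F A$, and then exhibit a totally isotropic subspace of dimension $\dim_F A$. This will force the form to be isometric to a multiple of $\HH$ by the standard fact that a non-degenerate symmetric bilinear form of rank $2n$ containing an isotropic subspace of dimension $n$ is hyperbolic (see e.g.\ \cite[Chapter I]{MR2104929}).

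First I would unwind the definition: the form $\langle 1\rangle+\langle-1\rangle$ on $A\oplus A$ is
\[
\beta\colon\bigl((x,y),(x',y')\bigr)\longmapsto xx'-yy'\in A,
\]
and $\Tr_{A/F}(\beta)$ is the $F$-valued form $\Tr_{A/F}\circ\beta$ on the $F$-vector space $A\oplus A$. Since $A/F$ is finite étale, the trace pairing $A\times A\to F$, $(a,b)\mapsto \Tr_{A/F}(ab)$ is non-degenerate, and one checks immediately that $\Tr_{A/F}(\beta)$ is then non-degenerate as well (its Gram matrix in any basis is a block combination of the non-degenerate trace pairing matrices).

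Next I would consider the diagonal subspace $\Delta=\{(a,a) : a\in A\}\subset A\oplus A$, which is an $F$-subspace of dimension $\dim_F A$. For any $(a,a),(a',a')\in\Delta$ we have
\[
\Tr_{A/F}(\beta((a,a),(a',a')))=\Tr_{A/F}(aa'-aa')=0,
\]
so $\Delta$ is totally isotropic of dimension exactly half the rank of $\Tr_{A/F}(\beta)$.

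Combining these two points, $\Tr_{A/F}(\langle 1\rangle+\langle-1\rangle)$ is a non-degenerate symmetric bilinear form of rank $2\dim_F A$ admitting a Lagrangian, hence is isometric to $\dim_F A\cdot\HH$ in $\GW(F)$. The only substantive step is the non-degeneracy of the trace pairing on an étale algebra, which is where the hypothesis that $A/F$ is étale is used; everything else is a direct verification.
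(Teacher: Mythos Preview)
Your proof is correct. Both arguments exploit the fact that $\beta\oplus(-\beta)$ is hyperbolic for any non-degenerate form $\beta$, but they reach it by different routes.

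The paper uses additivity of $\Tr_{A/F}$ together with the relation $\langle a\rangle+\langle-a\rangle=\HH$ in $\GW(F)$: writing $\Tr_{A/F}(\langle1\rangle)=\beta$, one has $\Tr_{A/F}(\langle-1\rangle)=-\beta$, and after diagonalizing $\beta$ the sum collapses term-by-term to copies of $\HH$. Your argument instead works directly with the realization on $A\oplus A$, exhibits the diagonal as a Lagrangian, and invokes the structural fact that a non-degenerate form with a Lagrangian is hyperbolic. The paper's version is marginally more elementary (it only needs the defining relation of $\GW$), while yours is more geometric and avoids any choice of diagonalization. Both use the \'{e}tale hypothesis at exactly the same point, namely to ensure the trace pairing on $A$ is non-degenerate.
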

\begin{proof}
\[\Tr_{A/F}(\langle1\rangle+\langle-1\rangle)=\Tr_{A/F}(\langle1\rangle)+\Tr_{A/F}(\langle-1\rangle)\]
Assume $\Tr_{A/F}(\langle1\rangle)$ be represented by the form $\beta:V\times V\rightarrow F$. Then $\Tr_{A/F}(\langle-1\rangle)$ is represented by $-\beta$. Now the Lemma follows from the equality $\langle a\rangle+\langle-a\rangle=\langle1\rangle+\langle-1\rangle$ in $\GW(F)$ for any $a\in F^{\times}$.
\end{proof}

It follows from Lemma \ref{lemma: quadr ext hyperbolic form} and Lemma \ref{lemma: trace of hyperbolic form} that \eqref{eq: contribution of the 10 mult 2 lines} is equal to
\begin{equation}
\label{eq: contribution mult 2 lines}
\Tr_{E((t))/k((t))}(\langle1\rangle+\langle-1\rangle)=10 \HH\in \GW(k((t))).
\end{equation}
\subsubsection{Using the local analytic structure}

Clemens and Kley also describe the local analytic structure of the lines on $X$ away from the crossings \cite[Example 4.2]{Clemens1998CountingCW}. It is given by 
\[\frac{\C[z,z']}{(z^2)}.\]


When we naively deform deform $(0,z^2)$ in $k[z,z']$ to $(t^2g_3,z^2+t^2g_4)$ for general $g_3,g_4\in k[z,z']$, 0 deforms with multiplicity 2 over $L=\frac{k[w]}{(w^2-d)}$ where $d=-g_4(0)$. The local $\A^1$-degree at the deformed zeros $(z_t,z_t')$ is 
\small
\begin{align*}
&\Tr_{L((t))/k((t))}\langle\det \begin{pmatrix}
t^2\frac{\partial g_3}{\partial z}(z_t,z_t')&2z_t+t^2\frac{\partial g_4}{\partial z}(z_t,z_t')\\
t^2\frac{\partial g_3}{\partial z'}(z_t,z_t')&t^2\frac{\partial g_4}{\partial z'}(z_t,z_t')
\end{pmatrix}\rangle
\\=&\Tr_{L((t))/k((t))}(\langle-2z_1t^3\frac{\partial g_3}{\partial z'}(0)\rangle)=\Tr_{L((t))/k((t))}(\langle-2t\frac{\partial g_3}{\partial z'}(0)\sqrt{d} \rangle)=\HH.
\end{align*}
\normalsize
Again this does not reprove our computation above, but it illustrates what is going on. 

\subsection{The dynamic Euler number of $\mathcal{E}=\Sym^5\mathcal{S}^*\rightarrow \Gr(2,5)$}




We are ready to compute \eqref{eq: Euler number of deformation of Fermat} by summing up the local indices at the deformed lines described in Proposition \ref{prop: lt for mult 2} and Proposition \ref{prop: lt for mult 5}.
Adding $50\cdot$\eqref{eq: contribution mult 2 lines} to \eqref{eq: contribution mult 5 lines} we get the following theorem.
\begin{theorem}
The dynamic Euler number of $\mathcal{E}=\Sym^5\mathcal{S}^*\rightarrow\Gr(2,5)$ is 
\begin{align*}e^{\operatorname{dynamic}}(\mathcal{E})&=50\cdot 10\cdot \HH+930\cdot \HH+15\langle1\rangle\\
&=1445\langle 1\rangle+1430\langle-1\rangle\in \GW(k((t)))\end{align*}
when $\operatorname{char} k\neq 2,5$.
This gives a new proof for the fact that
\[e^{\A^1}(\mathcal{E})=1445\langle 1\rangle+1430\langle-1\rangle\in \GW(k).\]
\end{theorem}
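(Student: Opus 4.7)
The plan is to assemble the computation directly from the pieces prepared in Section \ref{section: lines on fermat}. By Definition \ref{defn: dynamic Euler number}, I need to evaluate
\[e^{\operatorname{dynamic}}(\mathcal{E}) = \sum_{l_t\in \sigma_{F_t}^{-1}(0)} \operatorname{ind}_{l_t}\sigma_{F_t} \in \GW(k((t))),\]
where $\sigma_{F_t}$ is the section defined by the general deformation $F_t = F + tG + t^2 H + \dots$ of the Fermat. First I would invoke Albano--Katz's description (as extended in Propositions \ref{prop: lt for mult 2} and \ref{prop: lt for mult 5}) to see that the zeros of $\sigma_{F_t}$ consist of exactly the $2\cdot 10\cdot 50 + 5\cdot 375 = 2875$ deformed lines, partitioned into two classes: the $500$ multiplicity-$2$ deformations sitting on a single component $W_i$, and the $1875$ multiplicity-$5$ deformations arising from intersection lines $W_i\cap W_j$. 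Since these are all the lines of $X_t$ over the algebraic closure of $k((t))$, the sum of local indices is indeed the full dynamic Euler number.

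Next, I would assemble the contribution from the multiplicity-$5$ lines. For each of the $\binom{5}{2}\binom{3}{2}/2 = 15$ unordered choices of two pairs $\{i,j\},\{m,n\}$ defining an intersection, the corresponding scheme of $\bar{k}$-lines has coordinate ring $E = k[x,y]/(x^5-1,y^5-1)$; for each such point, the five deformations are defined over the \'etale algebra $L = E[w]/(w^5 - ab)$, and equation \eqref{eq: contribution mult 5 lines} together with Lemma \ref{lemma:zeta5} give a contribution of $2\HH + \langle 1 \rangle$ per point. Applying $\operatorname{Tr}_{E((t))/k((t))}$ (via two more applications of Lemma \ref{lemma:zeta5}, one for each factor of $E$) yields the total contribution $930\HH + 15\langle 1\rangle$, as in \eqref{eq: all mult 5 lines}. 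For the multiplicity-$2$ lines on a fixed component $W$, the deformed zeros are defined over $L=E[w]/(w^2+abd)$ for $E = k[a,b]/(1+a^5+b^5, Q(a,b))$, and combining \eqref{eq: contribution of the 10 mult 2 lines} with Lemmas \ref{lemma: quadr ext hyperbolic form} and \ref{lemma: trace of hyperbolic form} gives $10\HH$ per component, for a total of $500\HH$ from the $50$ components.

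Finally, summing these contributions gives
\[e^{\operatorname{dynamic}}(\mathcal{E}) = 500\HH + 930\HH + 15\langle 1\rangle = 1430\HH + 15\langle 1\rangle = 1445\langle 1\rangle + 1430\langle -1\rangle \in \GW(k((t))),\]
using $\HH = \langle 1\rangle + \langle -1\rangle$. For the second statement, I would observe that this class lies in the image of $i:\GW(k)\to \GW(k((t)))$ (Theorem \ref{thm: Springer}) — indeed $1445\langle 1\rangle + 1430\langle -1\rangle$ makes sense in $\GW(k)$ and maps to itself under $i$. By the functoriality argument following Definition \ref{defn: dynamic Euler number}, $e^{\A^1}(\mathcal{E})$ is the unique preimage, which gives the stated value in $\GW(k)$.

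The genuine obstacle, already absorbed into the cited propositions, is verifying that Albano--Katz's deformation analysis goes through in characteristic $\neq 2,5$ and that the jacobian-determinant computation yields a representative whose class in $\GW(L((t)))$ has the asserted leading term; once those are in hand the rest is a bookkeeping assembly via the trace lemmas.
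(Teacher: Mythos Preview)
Your proposal is correct and follows essentially the same approach as the paper: the theorem is obtained by adding the multiplicity-$2$ contribution $50\cdot 10\HH$ from \eqref{eq: contribution mult 2 lines} to the multiplicity-$5$ contribution $930\HH + 15\langle 1\rangle$ from \eqref{eq: all mult 5 lines}, and then invoking Springer's theorem and the functoriality remark after Definition \ref{defn: dynamic Euler number} to descend to $\GW(k)$. Your summary of how the trace lemmas feed into those two displayed contributions is accurate and matches the paper's development.
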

Note that $e^{\operatorname{dynamic}}(\mathcal{E})$ is independent of $t$ as expected.
Combining this theorem with Theorem \ref{thm: main thm}, we get the following.
\begin{corollary}
For $\operatorname{char}k\neq 2,5$ we have
\[\sum\Tr_{k(l)/k}(\operatorname{Type}(l))=1445\langle1\rangle+1430\langle-1\rangle\in \GW(k)\] 
where the sum runs over the lines on a general quintic threefold.
\end{corollary}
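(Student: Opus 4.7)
The plan is to combine the two main ingredients already established in the paper: Theorem \ref{thm: main thm}, which identifies the type of a simple isolated line with the local index of $\sigma_f$, and the computation of $e^{\A^1}(\mathcal{E})$ via the dynamic Euler number of the Fermat section. First I would invoke Lemma \ref{lemma: separable field ext} to guarantee that for a general quintic threefold $X=\{f=0\}\subset\PP^4$ the section $\sigma_f$ of $\mathcal{E}=\Sym^5\mathcal{S}^*\to\Gr(2,5)$ has only simple, isolated zeros, so that the $\A^1$-Euler number is literally the sum of local indices
\[
e^{\A^1}(\mathcal{E})=\sum_{l\subset X}\ind_l\sigma_f\in\GW(k).
\]

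Next I would reduce the local index at a line $l$ with field of definition $k(l)/k$ to the local index of the base changed section at the $k(l)$-rational line $l_{k(l)}$ on $X_{k(l)}$, using the compatibility of the local index with field extensions recorded via $\ind_l\sigma_f=\Tr_{k(l)/k}\ind_{l_{k(l)}}(\sigma_f)_{k(l)}$ (this is the same device used in Subsection \ref{subsection: local coordinates}, relying on \cite[Proposition 34]{MR4247570} and the fact that $k(l)/k$ is separable). Applying Theorem \ref{thm: main thm} over $k(l)$ converts each such local index into the type of the $k(l)$-rational line $l_{k(l)}$, and therefore
\[
e^{\A^1}(\mathcal{E})=\sum_{l\subset X}\Tr_{k(l)/k}\operatorname{Type}(l)\in\GW(k).
\]

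Finally I would appeal to the theorem just proved in the preceding subsection, namely $e^{\A^1}(\mathcal{E})=1445\langle1\rangle+1430\langle-1\rangle$, which was obtained by computing the dynamic Euler number $e^{\operatorname{dynamic}}(\mathcal{E})\in\GW(k((t)))$ from the 2875 deformed lines on the Fermat quintic and then using the injectivity of $i:\GW(k)\to\GW(k((t)))$ from Springer's Theorem \ref{thm: Springer} to descend the identity to $\GW(k)$. Comparing the two expressions for $e^{\A^1}(\mathcal{E})$ yields the asserted formula.

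Since all the heavy lifting is contained in Theorem \ref{thm: main thm} and in the Fermat computation of Section \ref{section: lines on fermat}, no new obstacle arises here; the only point requiring care is the hypothesis $\operatorname{char}k\neq 2,5$, which is needed simultaneously for the type to be defined (characteristic $\neq 2$, as in Section \ref{section: defn of type}) and for the Fermat deformation analysis of Propositions \ref{prop: lt for mult 2} and \ref{prop: lt for mult 5} to go through (characteristic $\neq 5$). The resulting corollary is thus a clean consequence of the two main theorems.
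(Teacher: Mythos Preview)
Your proposal is correct and follows exactly the approach the paper takes: the paper simply states this corollary as an immediate consequence of ``combining this theorem with Theorem \ref{thm: main thm}'' without writing out a proof, and you have spelled out precisely that combination (Lemma \ref{lemma: separable field ext} for simplicity of the zeros, the trace reduction via \cite[Proposition 34]{MR4247570}, Theorem \ref{thm: main thm} over $k(l)$, and the Fermat computation of $e^{\A^1}(\mathcal{E})$).
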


\subsection{Local contributions of the distinguished varieties}
\label{subsection: dist var of Fermat}
Albano and Katz's work \cite[Proposition 2.2 and Proposition 2.3]{MR1024767} determine well-defined zero cycles in $\CH_0(\Gr(2,5))$ supported on each of the 50 components $W_i$ and on the 375 non-empty intersections of 2 components $W_i\cap W_j$ with $i\neq j$. This indicates that the distinguished varieties of the intersection product of $\sigma_F$ by $s_0$ should be
\begin{enumerate}
\item the 50 components $W_i$ of $\sigma_F^{-1}(0)$
\item and the 375 non-empty intersections $W_i\cap W_j$, $i\neq j$.
\end{enumerate}
Clemens and Kley show that these are indeed the distinguished varieties using the local analytic structure \cite[Example 4.2]{Clemens1998CountingCW}.

Let $l$ be a line that deforms with a general deformation $F_t$ to lines $l_{t,1},\dots,l_{t,s}$. Then our computation shows that there is a unique element in $\GW(k)$ that is mapped to $\sum_{i=1}^s\ind_{l_{t,i}}\sigma_{F_t}$ by $i:\GW(k)\rightarrow \GW(k((t)))$. We call this unique element \emph{local index} at $l$ with respect to the deformation $F_t$ and denote it by $\ind_l\sigma_{F_t}$. 
The local indices at the lines that deform with $F_t$ are
\begin{enumerate}
\item $\ind_l\sigma_{F_t}=\Tr_{k(l)/k}(\HH)\in \GW(k)$ for a multiplicity $2$ line $l$,
\item $\ind_l\sigma_{F_t}=\Tr_{k(l)/k}(2\HH+\langle1\rangle)\in \GW(k)$ for a multiplicity $5$ line $l$.
\end{enumerate}
Furthermore, there are well-defined \emph{local indices} $\ind_Z\sigma$ at the distinguished varieties $Z$ independent of the chosen general deformation $F_t$.
\begin{enumerate}
\item The local index at $W_i$ is $\ind_{W_i}\sigma_F:=\sum_{l\in W_i\text{ deforms with }F_t}\ind_l\sigma=10\HH\in\GW(k)$.
\item The local index at $W_i\cap W_j=\{l\}$ is $\ind_{W_i\cap W_j}\sigma_F:=\Tr_{k(l)/k}(2\HH+\langle1\rangle)\in \GW(k)$.
\end{enumerate}

Summing up those local indices we get the following.
\begin{theorem}
Assume $\operatorname{char}k\neq2,5$.
The $\A^1$-Euler number $e^{\A^1}(\mathcal{E})$ can be expressed as the sum of local indices at the distinguished varieties of the intersection product of the section $\sigma_F$ defined by the Fermat quintic threefold by $s_0$
\begin{align*}
e^{\A^1}(\mathcal{E})=1445\langle1\rangle+1430\langle-1\rangle=\sum_{i=1}^{50}\ind_{W_i}\sigma_F+\sum_{i\neq j}\ind_{W_i\cap W_j}\sigma_F\in \GW(k).
\end{align*}

Furthermore, $e^{\A^1}(\mathcal{E})$ is equal to
\begin{align*}
e^{\A^1}(\mathcal{E})&=\sum_{l\text{ deforms with }F_t}\ind_l\sigma_{F_t}\\
&=\sum\Tr_{k(l)/k}(\HH)+\sum\Tr_{k(l)/k}(2\HH+\langle1\rangle)\in\GW(k)
\end{align*}
where the first sum runs over the lines that deform with multiplicity 2 and the second over the lines that deform with multiplicity 5 for a general deformation $F_t$ of the Fermat.
\end{theorem}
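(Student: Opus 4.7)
The plan is to package the explicit $\GW(k((t)))$ computations of Section 5 into well-defined classes in $\GW(k)$ attached to individual deforming lines and to distinguished varieties, and then to add them up. The foundational observation is that the injective map $i\colon\GW(k)\hookrightarrow\GW(k((t)))$ of Springer's Theorem \ref{thm: Springer} lets us descend any class in its image to a unique $\GW(k)$-class; it will turn out that each relevant contribution does lie in the image.

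First I would establish the existence of the local index at a deforming line. For a multiplicity-$5$ line $l\in W_i\cap W_j$, Section 5.1 expressed the sum $\sum_{i=1}^{5}\ind_{l_{t,i}}\sigma_{F_t}$ as $\Tr_{L((t))/k((t))}(\langle 5\rangle)$, which by Lemma \ref{lemma:zeta5} equals $\Tr_{k(l)((t))/k((t))}(2\HH+\langle 1\rangle)$; this visibly lies in $i(\GW(k))$. For the ten multiplicity-$2$ lines on a component $W_i$, Section 5.2 showed that the total contribution is $\Tr_{E((t))/k((t))}(\HH)$, which equals $10\HH$ via Lemmas \ref{lemma: quadr ext hyperbolic form} and \ref{lemma: trace of hyperbolic form}; the same lemmas distribute $\Tr_{k(l)/k}(\HH)$ to each $k$-point $l$ of $\Spec E$. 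In each case injectivity of $i$ lets us set $\ind_l\sigma_{F_t}:=i^{-1}\!\left(\sum_i\ind_{l_{t,i}}\sigma_{F_t}\right)\in\GW(k)$.

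Next I would define $\ind_Z\sigma_F:=\sum_{l\in Z}\ind_l\sigma_{F_t}$ for each distinguished variety $Z$ and verify independence from $F_t$. For $Z=W_i\cap W_j$ the deforming line is unique and its residue field does not vary with $F_t$, so $\ind_Z\sigma_F=\Tr_{k(l)/k}(2\HH+\langle 1\rangle)$ is manifestly independent. For $Z=W_i$ the ten deforming multiplicity-$2$ lines are cut out by an étale $k$-algebra $E$ which genuinely depends on $F_t$ through the quadratic form $Q$ of Remark \ref{remark: general cond mult 2 lines}, but Lemma \ref{lemma: trace of hyperbolic form} gives $\Tr_{E/k}(\HH)=\dim_k(E)\cdot\HH=10\HH$ independently of the specific $E$, so $\ind_{W_i}\sigma_F=10\HH$ is independent of $F_t$.

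Finally I would assemble the total: the $50$ components contribute $50\cdot 10\HH=500\HH$, and the $375$ intersection lines, organized into $15$ étale $k$-algebras of the form $k[x,y]/(x^5-1,y^5-1)$, contribute $15\cdot(62\HH+\langle 1\rangle)=930\HH+15\langle 1\rangle$ by two iterated applications of Lemma \ref{lemma:zeta5}. The grand total is $1430\HH+15\langle 1\rangle=1445\langle 1\rangle+1430\langle -1\rangle\in\GW(k)$, matching the dynamic Euler number from the previous theorem and hence $e^{\A^1}(\mathcal{E})$. The main conceptual hurdle is the descent step: showing that the partial sum $\sum_i\ind_{l_{t,i}}\sigma_{F_t}$ over the deformations of a single Fermat line already lies in $i(\GW(k))$ is verified here \emph{a posteriori} via the hyperbolic shape of the contributions, but really asks for a general theory of local indices attached to non-isolated zeros that deform with a generic deformation.
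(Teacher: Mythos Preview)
Your proposal is correct and follows essentially the same approach as the paper: the theorem is a repackaging of the computations of Sections~5.1--5.3, and the paper itself does not give a separate proof beyond defining $\ind_l\sigma_{F_t}$ and $\ind_Z\sigma_F$ immediately before the statement and observing that the sums recover $1445\langle1\rangle+1430\langle-1\rangle$. Your additional remark that independence of $\ind_{W_i}\sigma_F$ from the deformation relies on Lemma~\ref{lemma: trace of hyperbolic form} (since the \'etale algebra $E$ does depend on $F_t$ through $Q$) makes explicit a point the paper leaves implicit.
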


\section*{Ackknowledgements}
I would like to thank Kirsten Wickelgren for introducing me to the topic and her excellent guidance and feedback on this project. I am also very grateful to Sheldon Katz for his suggestions and explanations.
Furthermore, I would like to thank Jesse Kass, Paul Arne \O stv\ae r, Thomas Brazelton, Stephen McKean and Gard Olav Helle for helpful discussions and I thank Selina Pauli. 
I also want to thank the anonymous referee for many helpful comments and suggestions.
I gratefully acknowledge support by the RCN Frontier Research Group Project no. 250399 “Motivic Hopf Equations.”
This work was partly initiated during my stay at Duke University.
I also gratefully acknowledge the support of the Centre for Advanced Study at the Norwegian Academy of Science and Letters in Oslo, Norway, which funded and hosted the research project “Motivic Geometry."

\bibliographystyle{alpha}
\bibliography{linebib}
\end{document}